\documentclass{siamltex}
\usepackage{mathrsfs}
\usepackage{color}
\usepackage{amscd}
\usepackage{threeparttable,booktabs}
\usepackage{algorithm,algorithmic}
\usepackage{amsmath,amsfonts,amssymb,graphicx}%




\def \aa{\mathfrak a}
\def \rea{\mathfrak a}
\def \bb{\mathfrak b}
\def \bone{\mathbf 1}

\def\RR{\mathbb{R}}
\def\CC{\mathbb{C}}

\def\ba{{\pmb \alpha}}
\def\Range{{\rm Range}}
\def\range{{\rm Range}}
\def\dist{{\rm dist}}

\newcommand{\Ham}{\mathcal{H}}
\newcommand{\Sym}{\mathcal{S}}

\def\name{{\rm ILRSI}}

\newtheorem{remark}[theorem]{Remark}

\newtheorem{example}[theorem]{Example}

\title{A new subspace iteration method for the algebraic Riccati equation %
 \thanks{Version of July 10, 2013.}}

\author{Yiding Lin\thanks{School of Mathematical Sciences, Xiamen University, China and Dipartimento di Matematica, Universit{\`a} di Bologna, Bologna, Italy({\tt Yiding.Lin@gmail.com}).}
        \and Valeria Simoncini\thanks{Dipartimento di Matematica, Universit{\`a} di Bologna,
Piazza di Porta S. Donato, 5, 40127 Bologna, Italy   ({\tt valeria.simoncini@unibo.it}).}}

\begin{document}
\bibliographystyle{plain}
\maketitle

\begin{abstract}
We consider the numerical solution of the continuous algebraic
Riccati equation 
$A^*X+XA-XFX+G=0$, with $F=F^*, G=G^*$ of low rank and $A$ large and sparse.
We develop an algorithm for the low rank approximation of $X$
by means of an invariant subspace iteration on a function of the associated
Hamiltonian matrix. We show that the sought after approximation 
can be obtained by a low rank update, in the style of the well known ADI iteration
for the linear equation, from which the new method inherits many algebraic properties.
Moreover, we establish new insightful matrix relations with emerging projection-type
methods, which will help increase our understanding of this latter class of solution
strategies.
\end{abstract}

\begin{keywords}
Riccati equation, ADI, rational Krylov subspace, invariant subspace iteration
\end{keywords}

\begin{AMS}
47J20, 65F30, 49M99, 49N35, 93B52
\end{AMS}

\pagestyle{myheadings}
\thispagestyle{plain}

\markboth{Yiding Lin and Valeria Simoncini}{Subspace iteration for the algebraic Riccati equation} 


\section{Introduction}

We are interested in the numerical solution of the continuous algebraic
Riccati equation 
\begin{equation}\label{eqn:main}
A^*X+XA-XFX+G=0, \quad F=F^*, G=G^*,
\end{equation}
where $A\in\RR^{n\times n}$ has large dimensions, $F, G \in \RR^{n\times n}$ have
 low rank, and $X$ is the unknown matrix to be approximated\footnote{%
We consider real matrices
because the typical applications we address all have real data. Nonetheless, the method we are going to introduce is well defined also in the complex field.}. 
Here and in the following, $M^*$ denotes the conjugate transpose of the matrix $M$.
 We shall assume
that $A$ is stable, that is its eigenvalues all have strictly negative real part.
The quadratic matrix equation in (\ref{eqn:main}) has a dominant role in the solution and
analysis of
optimal control problems associated with dynamical systems, and it has attracted the
interest of many researchers both for its elegance and its timeliness in applied
field; we refer the reader to, e.g., \cite{Lancaster.Rodman.95},\cite{Antoulas.05},%
\cite{KGMP.03},\cite{Benner2005a},\cite{Schilders2008},\cite{Bittanti.Laub.Willems.91}.

A matrix $X$ solution to (\ref{eqn:main}) is such that the columns of the matrix 
\begin{eqnarray}\label{eqn:IX}
\begin{bmatrix} I_n \\ X\end{bmatrix} ,
\end{eqnarray}
where $I_n\in\RR^{n\times n}$ is the identity matrix,
generate an invariant subspace of the Hamiltonian matrix (\cite{Lancaster.Rodman.95})
$$
\Ham=\begin{bmatrix}A &-F \\ -G &-A^*\end{bmatrix} .
$$
In particular,
we assume that the eigenvalues of $\Ham$ satisfy
\begin{equation}\label{eqn:csplitting}
\begin{aligned}
&\Re(\lambda_1)\leq \Re(\lambda_2)\leq \ldots\leq \Re(\lambda_n)< 0 <\Re(\lambda_{n+1})\leq \Re(\lambda_{n+2})\leq \ldots\leq \Re(\lambda_{2n})\\
\end{aligned} ,
\end{equation}
so that, in particular, no purely imaginary eigenvalues arise.
We look for an approximation to the extremal solution $X_{+}$ of (\ref{eqn:main}),
associated with all eigenvalues of $\Ham$ with negative real part \cite{Lancaster.Rodman.95}. 
Such solution is called a {\it stabilizing} solution, being such that
the matrix $A-FX_+$ is stable.

Many numerical procedures have been explored for solving the quadratic
matrix equation (\ref{eqn:main}), see, e.g., \cite{Binietal.book.12} for a thorough
survey, however few can address the case when $A$ has large 
dimensions \cite{Binietal.book.12},\cite{Benner.Li.Penzl.08},\cite{Benner.Saak.10}. 
In this case, usually a symmetric low rank approximation matrix
is sought, in the form of the product of two matrices, such as $\widehat X = U U^*$,
with $U$ having few columns. Such approach avoids storing the full $n\times n$
matrix, which would be prohibitive for large $n$.
Among these strategies, is the class of exact and inexact Newton methods: Newton's
iteration applied to (\ref{eqn:main}) can be conveniently rewritten so as 
to update the low rank approximate solution and its rank at each iteration.
The approach requires the (in)exact solution of a linear matrix equation 
at each iteration \cite{Kleinman_68},\cite{FHS.09}, which is performed
by means of iterative methods, such as ADI or projection methods;
we refer to \cite{Benner.Saak.survey13} for a very recent survey.
For large matrices stemming from sufficiently regular differential control problems, 
Newton strategies based on hierarchical matrices and nonlinear multigrid methods
have also shown to be effective
\cite{Grasedyck.Hackbusch.Khoromskij.03}, \cite{Grasedyck.08}.

Another class of methods has recently emerged as a
competitive alternative to nonlinear (Newton) solvers: the general approach consists in
extending well established projection type methods to the quadratic case,
with no significant modifications 
\cite{Jbilou_03},\cite{Heyouni.Jbilou.09},\cite{BHJ2011},\cite{Simoncini.Szyld.Monsalve.13}.
Although projection methods have gained increasing popularity in the linear
case, with thoroughly analyzed theoretical properties (\cite{Simoncini.survey.13}), 
their exploration
in the quadratic case has only recently started, and much of their properties
remains to be uncovered.

A less exercised class of methods is given by the doubling algorithm,
which was recently explored in the Riccati context in \cite{Lin.Chu.Lin.Weng.13};
however its memory and computational requirements have not been fully analyzed
for large nonsymmetric problems.

All these approaches attack (\ref{eqn:main}) as a quadratic equation.
We take a different viewpoint, which  consists in approximating $X$ in the second block of 
the matrix in (\ref{eqn:IX}), whose columns span an 
invariant subspace of $\Ham$. Such strategy is quite popular in the
small scale case, when an explicit possibly structure-preserving eigendecomposition
may be determined; see, e.g.,
 \cite{Byers.87},\cite{Arnold.Laub.84},\cite{Laub1991},\cite{Benner1997} and the
extensive treatment in \cite{Binietal.book.12}. A possible adaptation
to the large scale setting was recently proposed 
in \cite{Amodei.Buchot.10}, where an approximation of the form
$X_k=Z W Y^*$ was derived, stemming from the approximation of selected
stable eigenpairs of $\Ham$.

The aim of this paper is to develop an algorithm for the approximation of $X$
by means of an invariant subspace iteration on a function of the matrix $\Ham$
\cite{Laub1991}.
Typically, subspace iteration methods are based on $\Ham$. Here we consider
a subspace iteration method with a transformed matrix obtained using a
Cayley transformation. For $\alpha$ so that $\Ham + \alpha I$ is nonsingular,
the Cayley transformation is given by 
\begin{eqnarray}\label{eqn:Cayley}
\Sym(\alpha)=(\Ham + \alpha I)^{-1}(\Ham -\overline{\alpha} I) ,
\end{eqnarray}
and it is usually employed in the Riccati equation context for
accelerating the computation of the Schur form by 
means of a QR iteration \cite[p.133]{Binietal.book.12}, \cite{Byers.86}.
As a consequence of the transformation, 
the property (\ref{eqn:csplitting}) transforms into 
$|\sigma_1|\geq |\sigma_2|\geq \ldots\geq |\sigma_n|> 1 
>|\sigma_{n+1}|\geq |\sigma_{n+2}|\geq \ldots\geq |\sigma_{2n}|$, 
for the eigenvalues $\sigma_j$ of $\Sym(\alpha)$, and 
the columns of $[I_n;X_+]$  span the invariant subspace of $\Sym(\alpha)$
associated with the $n$ eigenvalues largest in modulus. The transformation
thus provides a more natural setting for a subspace iteration.
We will show that whenever $F$ and $G$ are positive semidefinite
and have low rank, such iteration can be written in terms of a fixed point
recurrence
in the {\it low rank} approximation matrix $X_k$, and a low rank update can be
performed.  To the best
of our knowledge, this iteration appears to be new, in particular 
with the simplification obtained in the low rank case. From our derivation
it readily follows  that this
novel approach {coincides with the ADI method  in the linear case, namely
whenever $F=0$}, thus showing that ADI may be bonded to
a subspace iteration method. 
The proposed method depends on parameters that can be deduced from known
properties of the problem, or estimated a-priori. In that respect, the method
inherits the properties of its linear counterpart ADI.

We will also derive relations between
the new subspace iteration and projection methods 
for the Riccati equation
that use the rational Krylov subspace (RKSM). 
These results provide new insights in the understanding of the
convergence properties of  RKSM when directly applied to (\ref{eqn:main}).

{We emphasize that our developments provide a new and insightful 
matrix framework
that, on the one hand,  will allow us to bridge the gap between methods for
two closely related linear and quadratic equations and, on the other hand,
will be a first step ahead in the understanding of projection 
methods for (\ref{eqn:main}), not based on the Newton method for large scale
problems.}

The following notation will be used throughout the manuscript. 
$ F \succ 0$ ($F \succeq 0$) will
denote a Hermitian and positive (semi-)definite matrix $F$.
The Euclidean norm will be used for vectors, and the associated induced 
norm for matrices, denoted by $\|\cdot \|$, together with the
Frobenius norm, denoted by $\|\cdot \|_F$. 
The notation diag($d$) and blkdiag($D_1,D_2$) will be used to
denote a diagonal matrix with the entries of the vector $d$ on the diagonal, and
a block diagonal matrix with block diagonal entries $D_1, D_2$, respectively.
{We will use Matlab (\cite{matlab7}) notation for matrices
and their subblocks whenever possible.}

\section{A subspace iteration with Cayley transformation}
Given $X_0\in\RR^{n\times n}$ and 
the parameters $\alpha_k$, $k=1,2,\ldots$ with $\Re(\alpha_k)>0$, such
that $(\Ham +\alpha_k I)$ is invertible\footnote{This condition will be
relaxed in the sequel.}, we 
consider the following iteration to compute a sequence of approximations $X_1, X_2, \ldots, 
X_k, \ldots$ to $X_+$.

\hskip 0.2in For $k=1,2, \ldots $

\hskip 0.4in  Compute
\begin{eqnarray}\label{eqn:basic_rec}
\begin{bmatrix}M_k\\N_k\end{bmatrix}:=
\Sym(\alpha_k) \begin{bmatrix}I\\X_{k-1}\end{bmatrix} \qquad 
\mbox{(with $\Sym(\alpha_k)$ as in (\ref{eqn:Cayley}))}
\end{eqnarray}

\hskip 0.4in  $X_k:=N_kM_k^{-1}$

\hskip 0.2in end

The iteration breaks down if  $M_k$ is singular at some iteration. In the
following we shall find a sufficient condition that ensures all $M_k$'s are nonsingular; 
we will also show that
this condition can be easily satisfied when, for instance, $X_0$ is chosen to be
the zero matrix and $F, G$ are positive semidefinite.

By eliminating the intermediate matrices $M_k, N_k$, the recursion above
can be rewritten as a fixed point iteration with the recurrence matrix
$X_k$. This will allow us to derive some crucial properties of the
approximate solution. 
To be able to write down the fixed point iteration, we need to 
express the statement  in (\ref{eqn:basic_rec}) in a more explicit way.
For {any} $\alpha\in\CC$ {such that $\Ham+\alpha I$
is nonsingular}, let
\begin{equation*}
\Ham+\alpha I  
=\begin{bmatrix}A+\alpha I &-F \\ -G &-A^*+\alpha I \end{bmatrix} .
\end{equation*}
If $A+\alpha I$ is nonsingular, then the
Schur complement $S_1(\alpha):= (-A^*+\alpha I) -G (A+\alpha I)^{-1} F$ is
also nonsingular. Analogously, $-A^*+\alpha I$ nonsingular implies
$S_2(\alpha):=(A+\alpha I) - F (-A^*+\alpha I)^{-1}G$ nonsingular.
To simplify the notation, we shall often omit the dependence of $S_1, S_2$
on $\alpha$. It can be readily verified that
\begin{eqnarray}\label{eqn:schurequal}
S_2^{-1}F (-A^*+\alpha I)^{-1}&=&(A+\alpha I)^{-1}FS_1^{-1} , \label{eqn:schurequal1}\\
S_1^{-1}G (A+\alpha I)^{-1}&=&(-A^*+\alpha I)^{-1}GS_2^{-1} . \label{eqn:schurequal2}
\end{eqnarray}
For later use, we notice that we can write
\begin{equation*}
\begin{aligned}
(\Ham+\alpha I)^{-1}
&=\begin{bmatrix}
S_2^{-1} &S_2^{-1}F(-A^*+\alpha I)^{-1} \\ S_1^{-1}G(A+\alpha I)^{-1} &S_1^{-1}
\end{bmatrix}\\
&=\begin{bmatrix}
S_2^{-1} &(A+\alpha I)^{-1}F S_1^{-1} \\ (-A^*+\alpha I)^{-1}G S_2^{-1} &S_1^{-1}
\end{bmatrix} .\\
\end{aligned}
\end{equation*}


At the $k$th iteration, 
let $\alpha_k = \aa_k + \imath \bb_k$, with $\aa_k, \bb_k\in\RR$; this
notation will be used throughout the paper. In particular, from now on we
shall assume that $\aa_k >0$ for all $k$.
Since $\Sym(\alpha_k) = I- 2 \aa_k (\Ham+ \alpha_kI)^{-1}$, we can write
the product in (\ref{eqn:basic_rec}) as follows
\begin{equation*}
\begin{aligned}
\Sym(\alpha_k) \begin{bmatrix}I\\X_{k-1}\end{bmatrix}
&=(I-2\aa_k(\Ham+ \alpha_k I)^{-1})\begin{bmatrix}I\\X_{k-1}\end{bmatrix}\\
&=\begin{bmatrix}I-2\aa_k S_2^{-1} -2\aa_k S_2^{-1}F (-A^*+\alpha_k I)^{-1}X_{k-1} \\ 
-2\aa_k S_1^{-1}G(A+\alpha_k I)^{-1} +(I-2\aa_k S_1^{-1})X_{k-1}\end{bmatrix},\\
\end{aligned}
\end{equation*}
so that the next iterate can be written by means of a fixed point iteration as follows,
\begin{eqnarray}\label{eqn:Xnew}
X_{k}&=&[-2\aa_k S_1^{-1}G(A+\alpha_k I)^{-1} +(I-2\aa_k S_1^{-1})X_{k-1}] \cdot \\
&&\hskip 0.6in [I-2\aa_k S_2^{-1} -2\aa_k S_2^{-1}F (-A^*+\alpha_k I)^{-1}X_{k-1}]^{-1}.
\nonumber
\end{eqnarray}

Notice that because of (\ref{eqn:schurequal1}) and (\ref{eqn:schurequal2}), 
it would be possible
to write the iteration in four possible different but mathematically
equivalent ways.

\begin{remark}
{\rm
If the non-linear term vanishes, that is if $F=0$, 
then the Riccati equation becomes the (linear)
Lyapunov equation $G+A^*X+XA=0$. In this case, it can be
readily noticed that the fixed point iteration in (\ref{eqn:Xnew}) coincides
with the ADI recursion for solving the Lyapunov equation; see, e.g., \cite[formula (4.1)]{Li2002}.
We will return to this correspondence in later sections.\endproof
}
\end{remark}

\section{Properties of the approximate solution}

In this section we analyze the existence of the approximate solution at each step $k$
of the subspace iteration, with $X_k$ obtained as in 
(\ref{eqn:Xnew}).
\begin{theorem}\label{th:existence}
Assume that $F, G \succeq 0$, and that $A^*-\alpha_k I$ is nonsingular. 
In (\ref{eqn:basic_rec}), assume that for some $k>0$ it holds that $X_{k-1}\succeq 0$. Then 
\begin{enumerate}
\item[$i)$] The matrix $M_k$ is nonsingular. 
\item[$ii)$] The matrix $X_k$ is well defined and satisfies $X_k=X_k^*$.
\end{enumerate}
\end{theorem}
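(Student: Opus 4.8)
The plan is to work directly with the block representation of $\Sym(\alpha_k)[I;X_{k-1}] = [M_k; N_k]$ and exploit the positive semidefiniteness of $F$, $G$, and $X_{k-1}$. The key observation is that the map $[I;X_{k-1}] \mapsto [M_k;N_k]$ can be rephrased in terms of the resolvent $(\Ham + \alpha_k I)^{-1}$, and the quantity controlling nonsingularity of $M_k$ is essentially a resolvent-type operator built from the stable matrix $A - FX_{k-1}$. More precisely, I would first rewrite $M_k$ using the Schur-complement expression for $(\Ham+\alpha_k I)^{-1}$ given in the excerpt, obtaining
\begin{equation*}
M_k = I - 2\aa_k\left(S_2^{-1} + S_2^{-1}F(-A^*+\alpha_k I)^{-1}X_{k-1}\right) = I - 2\aa_k S_2^{-1}\left(I + F(-A^*+\alpha_k I)^{-1}X_{k-1}\right).
\end{equation*}
Then I would aim to show that $M_k$ is similar (or directly equal after manipulation) to $(A - FX_{k-1} + \overline{\alpha_k} I)(A - FX_{k-1} + \alpha_k I)^{-1}$, i.e. a Cayley transform of the closed-loop matrix $A - FX_{k-1}$; this is the natural guess given the known small-scale theory and the ADI analogy from the Remark. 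Establishing this identity is the first main step.

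For part $i)$, once $M_k$ is identified with such a Cayley transform, nonsingularity follows provided $A - FX_{k-1} + \overline{\alpha_k}I$ is nonsingular, i.e. provided $-\overline{\alpha_k}$ is not an eigenvalue of $A - FX_{k-1}$. Here I would use $\aa_k > 0$ together with $F, X_{k-1} \succeq 0$: the matrix $A - FX_{k-1}$, although not symmetric, has a field of values / spectrum confined to the closed left half-plane in the relevant sense because $A$ is stable and $FX_{k-1}$ is a product of two positive semidefinite matrices (so $-FX_{k-1}$ has eigenvalues with nonpositive real part, and more usefully one can symmetrize: $\Re\langle (A-FX_{k-1})v, v\rangle \le \Re\langle Av,v\rangle$ after an appropriate inner-product change, or argue via the Lyapunov-type inequality). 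Since $\Re(-\overline{\alpha_k}) = -\aa_k < 0$ while all eigenvalues of $A - FX_{k-1}$ lie in the closed left half-plane, $-\overline{\alpha_k}$ cannot be an eigenvalue, so $A - FX_{k-1} + \overline{\alpha_k} I$ is invertible and hence $M_k$ is nonsingular.

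For part $ii)$, well-definedness of $X_k = N_k M_k^{-1}$ is immediate from $i)$. For the symmetry $X_k = X_k^*$, I would use the Hamiltonian structure: the defining property of $\Sym(\alpha_k)$ is that it is symplectic, i.e. $\Sym(\alpha_k)^* J \Sym(\alpha_k) = J$ with $J = \begin{bmatrix} 0 & I \\ -I & 0\end{bmatrix}$ (this follows from $\Ham$ being Hamiltonian, $J\Ham$ symmetric). Applying this to $V := [I;X_{k-1}]$ and using $V^* J V = X_{k-1} - X_{k-1}^* = 0$ since $X_{k-1} = X_{k-1}^*$, we get $(\Sym V)^* J (\Sym V) = [M_k;N_k]^* J [M_k;N_k] = M_k^* N_k - N_k^* M_k = 0$. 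Multiplying on the left by $M_k^{-*}$ and on the right by $M_k^{-1}$ yields $N_k M_k^{-1} - M_k^{-*}N_k^* = X_k - X_k^* = 0$, as desired.

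The main obstacle I anticipate is the first step: rigorously deriving the Cayley-transform identity for $M_k$ and, relatedly, pinning down exactly why the spectrum of the closed-loop matrix $A - FX_{k-1}$ avoids $-\overline{\alpha_k}$ when $X_{k-1}$ is merely an intermediate iterate (not the true solution $X_+$) that happens to be positive semidefinite. The symplectic argument for symmetry is clean and should be routine once the identities $J\Sym^*J = \Sym^{-1}$ (equivalently $\Sym^*J\Sym = J$) are invoked; one only needs $A^* - \alpha_k I$ nonsingular, which is assumed, to guarantee $\Sym(\alpha_k)$ is well defined on the relevant block. I would therefore spend most of the effort making the closed-loop spectrum argument precise — likely via the observation that $F, X_{k-1} \succeq 0$ implies $FX_{k-1}$ is similar to a positive semidefinite matrix (namely $X_{k-1}^{1/2} F X_{k-1}^{1/2}$ up to the null space), so its eigenvalues are real and nonnegative, and then combining this with stability of $A$ through a perturbation/continuity or a direct Lyapunov inequality argument to place $\sigma(A - FX_{k-1})$ in the open left half-plane, comfortably away from $-\overline{\alpha_k}$.
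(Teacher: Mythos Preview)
Your symplectic argument for part $ii)$ is correct and is exactly what the paper does: the identity $\Sym(\alpha_k)^* J \Sym(\alpha_k)=J$ applied to $V=[I;X_{k-1}]$ with $V^*JV=0$ gives $M_k^*N_k=N_k^*M_k$, hence $X_k=X_k^*$.

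Part $i)$, however, has a genuine gap. First, the proposed Cayley-transform identity $M_k\sim (A-FX_{k-1}+\bar\alpha_k I)(A-FX_{k-1}+\alpha_k I)^{-1}$ cannot be right as stated: $M_k$ depends on $G$ through $S_2$, while your expression does not. More seriously, the spectral claim underpinning the nonsingularity argument is false. It is \emph{not} true that $A$ stable and $F,X_{k-1}\succeq 0$ force the spectrum of $A-FX_{k-1}$ into the closed left half-plane. For instance, with
\[
A=\begin{bmatrix}-1&4\\0&-1\end{bmatrix},\qquad F=\begin{bmatrix}0&0\\0&1\end{bmatrix},\qquad X_{k-1}=\begin{bmatrix}1&-1\\-1&2\end{bmatrix},
\]
all hypotheses hold, yet $A-FX_{k-1}=\begin{bmatrix}-1&4\\1&-3\end{bmatrix}$ has eigenvalues $-2\pm\sqrt{5}$, one of which is positive. (There is also an internal inconsistency: even granting your spectral claim, $-\bar\alpha_k$ has \emph{negative} real part and so lies \emph{inside} the closed left half-plane, not outside it.)

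The paper's route for $i)$ is different and avoids any stability statement about $A-FX_{k-1}$. One manipulates $M_k$ directly into the factored form
\[
M_k=S_2^{-1}(A-\bar\alpha_k I)\bigl[I+(-A+\bar\alpha_k I)^{-1}F(-A^*+\alpha_k I)^{-1}(G+2\aa_k X_{k-1})\bigr].
\]
The first two factors are nonsingular by hypothesis. In the bracket, the matrix $(-A+\bar\alpha_k I)^{-1}F(-A^*+\alpha_k I)^{-1}$ is Hermitian positive semidefinite and $G+2\aa_k X_{k-1}\succeq 0$; the product of two positive semidefinite matrices has only nonnegative eigenvalues, so $I+(\cdot)$ is nonsingular. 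This is the missing idea you need for part $i)$.
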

\begin{proof}
From the definition of $M_k$ we have
 \begin{eqnarray}
M_k&=&S_2^{-1} (S_2-2\aa_k I -2\aa_k F (-A^*+\alpha_kI)^{-1}X_{k-1})\nonumber\\
&=&S_2^{-1} (A-\bar \alpha_k I-F(-A^*+\alpha_kI)^{-1}G-2\aa_k F (-A^*+\alpha I)^{-1}X_{k-1})\nonumber\\
&=&S_2^{-1} (A-\bar \alpha_k I)[I- (A-\bar \alpha_k I)^{-1}F(-A^*+\alpha_kI)^{-1}(G+2\aa_k X_{k-1})]\nonumber\\
&=&S_2^{-1} (A-\bar \alpha_k I)[I+ (-A+\bar \alpha_kI)^{-1}F(-A^*+\alpha_kI)^{-1}(G+2\aa_k X_{k-1})]  .
\label{eqn:Mk}
\end{eqnarray}
Then, we observe that the nonzero eigenvalues of
$[(-A+\bar\alpha_kI)^{-1}F(-A^*+\alpha_kI)^{-1}](G+2\alpha X_{k-1})$ are all real and
positive, since the matrix is the product of two Hermitian and nonnegative
definite matrices. 
Therefore the quantity in brackets in (\ref{eqn:Mk})
is nonsingular and the first result follows. 

Since $X_k = N_k M_k^{-1}$, the first result ensures that $X_k$ is well defined.
We only need to show that it is Hermitian, namely
$X_k=X_k^*$, which
is equivalent to showing that $M_k^* N_k = N_k^* M_k$.
Let us write  ${\cal S} = [{\cal M}_1, {\cal M}_2;
{\cal N}_1, {\cal N}_2]$ with
%
%
\begin{equation}
\begin{aligned}
\mathcal {N}_1&:=-2\aa_k(-A^*+\alpha_k I)^{-1}GS_2^{-1},   &\mathcal {N}_2&:=I-2\aa_k S_1^{-1}\\
\mathcal {M}_1&:=I-2\aa_k S_2^{-1}, &\mathcal {M}_2&:=-2\aa_k(A+\alpha_k I)^{-1}FS_1^{-1}, \\
\end{aligned}
\end{equation}
We recall that since ${\cal H}$ is Hamiltonian, ${\cal S}$ is symplectic, so 
that from the definition of symplectic matrix
 it follows (\cite[p.24]{Binietal.book.12}) 
\begin{equation} \label{eqn:mathcalMN}
\mathcal {M}_1^*\mathcal {N}_1=\mathcal {N}_1^*\mathcal {M}_1, \quad
\mathcal {M}_2^*\mathcal {N}_2=\mathcal {N}_2^*\mathcal {M}_2, \quad
\mathcal {M}_2^*\mathcal {N}_1-\mathcal {N}_2^*\mathcal {M}_1=-I.
\end{equation}
Moreover,
$X_{k}=N_kM_k^{-1}=(\mathcal {N}_1+\mathcal {N}_2X_{k-1})
(\mathcal {M}_1+\mathcal {M}_2X_{k-1})^{-1}$.
Together with $X_{k-1}=X_{k-1}^*$, relations (\ref{eqn:mathcalMN})
show  that $M_{k}^*N_{k}=N_{k}^*M_{k}$, so that $X_{k}=X_{k}^*$.
\end{proof}

We note that the second result does not explicitly require that $F$ and $G$ be
positive semidefinite. 
{Moreover, the hypothesis that $A-\alpha_k I$ is nonsingular is
always satisfied for $A$ real and stable, and $\Re(\alpha_k)>0$.}

Next proposition derives a more convenient form for the iterate $X_k$, from which
we can deduce that $X_k$ is positive semidefinite for any $k>0$, if $X_0$ is.

\begin{proposition}\label{prop:lowrank}
Assume $F^*=F$, $G=C^*C$ and that for some $k>0$,  $X_{k-1}$ can be written as
$X_{k-1}=U_{k-1}T_{k-1}^{-1}U_{k-1}^*$ with 
$T_{k-1}$ Hermitian and nonsingular. Suppose $X_{k}$ is well defined and let
\begin{eqnarray}\label{eqn:Tk}
T_{k}\!=\!\!\begin{bmatrix}T_{k-1}&0\\0&2\aa_k I\end{bmatrix}
\! +2\aa_k\!\!\begin{bmatrix}U_{k-1}^*\\C\end{bmatrix} 
(-A+\bar \alpha_k I)^{-1}F(-A^*+\alpha_k I)^{-1}\!\begin{bmatrix}U_{k-1}&C^*\end{bmatrix}.
\end{eqnarray}
If $T_{k}$ is nonsingular, then $X_{k}=U_{k}T_{k}^{-1}U_{k}^*$, where
\begin{equation}\label{eqn:lowrankrecursion}
U_{k}=\begin{bmatrix}(-A^*+\alpha_k I)^{-1}(-A^*-\bar\alpha_k I)U_{k-1},&
-2\aa_k(-A^*+\alpha_k I)^{-1}C^*\end{bmatrix}.
\end{equation}
\end{proposition}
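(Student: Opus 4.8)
The plan is to eliminate the auxiliary matrices $M_k,N_k$ from the recursion and to verify directly that the proposed $U_k,T_k$ reproduce $X_k=N_kM_k^{-1}$. First I would invert $M_k$. From the factorization of $M_k$ obtained in the proof of Theorem~\ref{th:existence}, namely~(\ref{eqn:Mk}),
\[
M_k = S_2^{-1}(A-\bar\alpha_k I)\bigl[\,I+(-A+\bar\alpha_k I)^{-1}F(-A^*+\alpha_k I)^{-1}(G+2\aa_k X_{k-1})\,\bigr],
\]
I would insert $G=C^*C$ and $X_{k-1}=U_{k-1}T_{k-1}^{-1}U_{k-1}^*$. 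With $W:=[\,U_{k-1},C^*\,]$ and $R:=(-A+\bar\alpha_k I)^{-1}F(-A^*+\alpha_k I)^{-1}$ (so $R=R^*$, since $F=F^*$), one has $G+2\aa_k X_{k-1}=W\,\mathrm{blkdiag}(2\aa_k T_{k-1}^{-1},I)\,W^*$, and the Sherman--Morrison--Woodbury identity applied to the bracketed factor, together with
\[
\mathrm{blkdiag}\bigl(\tfrac{1}{2\aa_k}T_{k-1},I\bigr)+W^*RW=\tfrac{1}{2\aa_k}\,T_k ,
\]
which is precisely how the definition~(\ref{eqn:Tk}) of $T_k$ arises (and for which the hypothesis ``$T_k$ nonsingular'' is exactly what legitimates Woodbury), gives
\[
M_k^{-1}=\bigl(I-2\aa_k\,RW\,T_k^{-1}W^*\bigr)(A-\bar\alpha_k I)^{-1}S_2 .
\]

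Next I would treat $N_k=-2\aa_k S_1^{-1}G(A+\alpha_k I)^{-1}+(I-2\aa_k S_1^{-1})X_{k-1}$. Substituting $G=C^*C$, $X_{k-1}=U_{k-1}T_{k-1}^{-1}U_{k-1}^*$ and using the Schur-complement identities~(\ref{eqn:schurequal1})--(\ref{eqn:schurequal2}) to trade $S_1^{-1}$ for expressions in $S_2^{-1}$, together with $I-2\aa_k S_1^{-1}=S_1^{-1}\bigl(-A^*-\bar\alpha_k I-G(A+\alpha_k I)^{-1}F\bigr)$, one can pull $(-A^*+\alpha_k I)^{-1}$ to the left and exhibit $N_k$ in a form whose low-rank content is carried by the blocks $(-A^*-\bar\alpha_k I)U_{k-1}$ and $C^*$ and by a copy of $T_k^{-1}$. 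Forming the product $N_kM_k^{-1}$ and simplifying --- the resolvent factors combining into the outer factors of~(\ref{eqn:lowrankrecursion}), the low-rank blocks assembling into $U_k$ and $U_k^*$ (here $R=R^*$ is used), and the two resolvent-type inner factors, one coming from $N_k$ and one from the Woodbury term in $M_k^{-1}$, merging into a single $T_k^{-1}$ --- one arrives at $X_k=U_kT_k^{-1}U_k^*$ with $U_k$ as in~(\ref{eqn:lowrankrecursion}).

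The main obstacle is precisely this last bookkeeping: $N_k$, unlike $M_k$, is not handed to us pre-factored, so one must push $(-A^*+\alpha_k I)^{-1}$ through, invoke~(\ref{eqn:schurequal1})--(\ref{eqn:schurequal2}) in the right places, and, most delicately, verify that the various $T_k^{-1}$'s coalesce into a single one (no spurious $T_k^{-1}(\cdot)T_k^{-1}$ term surviving) and that the final right factor is indeed $U_k^*$. Two things keep this manageable. A good check at each stage is the Lyapunov case $F=0$: there $R=0$, $T_k=\mathrm{blkdiag}(T_{k-1},2\aa_k I)$, and $X_k=U_kT_k^{-1}U_k^*$ reduces to the low-rank ADI recursion, in agreement with the remark at the end of Section~2. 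Alternatively, one can avoid inverting $M_k$ altogether by setting $\widehat X_k:=U_kT_k^{-1}U_k^*$ and verifying the single identity $\widehat X_kM_k=N_k$ --- legitimate since $M_k$ is nonsingular by hypothesis, so that $X_k$ is the unique solution of $YM_k=N_k$ --- which replaces a matrix inversion by one multiplication while relying on the same Schur-complement manipulations.
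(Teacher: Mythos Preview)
Your plan is sound, and the Woodbury step is the right idea: recognizing that
\[
\mathrm{blkdiag}\!\bigl(\tfrac{1}{2\aa_k}T_{k-1},I\bigr)+W^*RW=\tfrac{1}{2\aa_k}T_k
\]
is exactly what makes $T_k$ appear, and the nonsingularity hypothesis on $T_k$ is precisely what licenses the inversion. The remaining bookkeeping on $N_kM_k^{-1}$ (or the alternative verification $\widehat X_kM_k=N_k$) is tedious but goes through; your $F=0$ sanity check is a good guardrail.

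The paper takes a structurally different route and never inverts $M_k$. It first uses~(\ref{eqn:schurequal2}) to factor $N_k=LR$ with
\[
L=\bigl[-2\aa_k(-A^*+\alpha_k I)^{-1}C^*,\ (I-2\aa_k S_1^{-1})U_{k-1}\bigr],\qquad
R=\begin{bmatrix}CS_2^{-1}\\ T_{k-1}^{-1}U_{k-1}^*\end{bmatrix},
\]
and then, because $X_k=L(RM_k^{-1})$ is Hermitian by Theorem~\ref{th:existence}, concludes that $L$ and $M_k^{-*}R^*$ share column space, so $X_k=L\widetilde T^{-1}L^*$ for some $\widetilde T$ recovered from the linear relation $M_k^*L=R^*\widetilde T$. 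Only then does Sherman--Morrison--Woodbury enter, applied to $S_1^{-1}$ (not to the bracket in $M_k$), to rewrite $L=U_k\cdot(\text{transition matrix})$; conjugating $\widetilde T$ by the inverse of that transition matrix yields the stated $T_k$. So the paper leverages the already-proved symmetry of $X_k$ to sidestep $M_k^{-1}$ entirely, at the price of computing $\widetilde T$ and a change of basis; your approach is more direct and has the virtue that $T_k$ drops out of Woodbury immediately, at the price of a heavier product $N_kM_k^{-1}$ to simplify. Both lines involve comparable amounts of algebra; neither dominates the other.
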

\begin{proof}
Using (\ref{eqn:schurequal}), we write
\begin{equation}
\begin{aligned}
X_k&=
[-2\aa_k S_1^{-1}G(A+\alpha I)^{-1} +(I-2\aa_k S_1^{-1})X_{k-1}]
M_k^{-1} \\
&=[-2\aa_k(-A^*+\alpha I)^{-1}GS_2^{-1} +(I-2\aa_k S_1^{-1})X_{k-1}]
M_k^{-1} , \\
\end{aligned}
\end{equation}
and with $G=C^*C$, we can write
\begin{eqnarray*}
&&[-2\aa_k(-A^*+\alpha I)^{-1}GS_2^{-1} +(I-2\aa_k S_1^{-1})X_{k-1}]=\\
&& \qquad
\underbrace{%
\begin{bmatrix}-2\aa_k(-A^*+\alpha I)^{-1}C^*,&(I-2\aa_k S_1^{-1})U_{k-1}
\end{bmatrix}}_{L}
\underbrace{\begin{bmatrix}CS_2^{-1}\\T_{k-1}^{-1}U_{k-1}\end{bmatrix}}_{R} ,
\end{eqnarray*}
so that $X_{k}=L(RM_k^{-1})$. Since $X_k$ is Hermitian (cf. Proposition~\ref{prop:lowrank}), 
$L$ and $M_k^{-*}R^*$ have the same column space,
therefore there exists $\widetilde{T}$ such that 
$M_k^{-*}R^*\widetilde{T}=L$.

Writing $M_k^{*}L=R^*\widetilde{T}$, it is possible to recover $\widetilde{T}$ explicitly (we
omit the tedious algebraic computations), namely
{\footnotesize
\begin{eqnarray}
\widetilde{T}&=& 
\begin{bmatrix}2\aa_k I & \\  & T_{k-1} \end{bmatrix} + \\
&&\!\!\!\!\!\!
\begin{bmatrix}2\aa_k C(-A+\bar \alpha_k I)^{-1}F(-A^*+\alpha_k I)^{-1}C^*&
4\aa_k^2 C(-A+\bar \alpha_k I)^{-1} (A+\alpha_k I)^{-1}FS_1^{-1}U_{k-1}\\
4\aa_k^2U_{k-1}^*(-A+\bar \alpha_k I)^{-1}FS_2^{-*}(-A^*+\alpha_k I)^{-1}C^*&
-2\aa_k U_{k-1}^*(-A+\bar \alpha_k I)^{-1}FS_2^{-*}(I-2\aa_k S_1^{-1})U_{k-1}\nonumber
\end{bmatrix}.
\end{eqnarray}
}
The symmetry of $\widetilde{T}$ can be obtained  after substituting 
(\ref{eqn:schurequal}) into the (2,1) block, {and using
(\ref{eqn:mathcalMN}) for the (2,2) block}. 

Let $\mathcal {P}:=[I-C(A+\alpha_k I)^{-1}F(-A^*+\alpha_k I)^{-1}C^*]^{-1}C
(A+\alpha_k I)^{-1}F(-A^*+\alpha_k I)^{-1}$.
Applying the Sherman-Morrison-Woodbury formula to $S_1$, we obtain
\begin{equation}
\begin{aligned}
&S_1^{-1}=(-A^*+\alpha_k I)^{-1}+(-A^*+\alpha_k I)^{-1}C^*\mathcal {P}\\
&I-2\aa_k S_1^{-1}=(-A^*+\alpha_k I)^{-1}(-A^*-\bar \alpha_k I)
-2\aa_k(-A^*+\alpha_k I)^{-1}C^*\mathcal {P}.
\end{aligned}
\end{equation}
Hence,
\begin{equation}
\begin{aligned}
L&=\begin{bmatrix}-2\aa_k(-A^*+\alpha_k I)^{-1}C^*,&(I-2\aa_k S_1^{-1})U_{k-1}\end{bmatrix}\\
&=\begin{bmatrix}-2\aa_k(-A^*+\alpha_k I)^{-1}C^*,&(-A^*+\alpha_k I)^{-1}(-A^*-\bar\alpha_k I)U_{k-1}\end{bmatrix}
\begin{bmatrix}I&\mathcal {P}U_{k-1}\\0&I\\\end{bmatrix}\\
&=\begin{bmatrix}(-A^*+\alpha_k I)^{-1}(-A^*-\bar\alpha_k I)U_{k-1},&-2\aa_k(-A^*+\alpha_k I)^{-1}C^*\end{bmatrix}
\begin{bmatrix}0&I\\I&\mathcal {P}U_{k-1}\\\end{bmatrix}\\
&=:U_k\begin{bmatrix}0&I\\I&\mathcal {P}U_{k-1}\\\end{bmatrix}.
\end{aligned}
\end{equation}
Explicit computation gives $T_{k}$ in (\ref{eqn:lowrankrecursion}) (explicit details are omitted):
\begin{equation}
\begin{bmatrix}-U_{k-1}^*\mathcal {P}^*&I\\I&0\\\end{bmatrix}\widetilde{T}
\begin{bmatrix}-\mathcal {P}U_{k-1}&I\\I&0\\\end{bmatrix} \equiv T_{k}.
\end{equation}
Note that $T_{k}$ nonsingular is equivalent to $\widetilde{T}$ nonsingular. 
Finally, 
\begin{equation}
\begin{aligned}
X_{k}&=L\widetilde{T}^{-1}L^*
=U_{k}\begin{bmatrix}0&I\\I&\mathcal {P}U_{k-1}\\\end{bmatrix}\widetilde{T}^{-1}
\begin{bmatrix}0&I\\I&U_{k-1}^*\mathcal {P}^*\\\end{bmatrix}U_{k}^*\\
&=U_{k}\left(\begin{bmatrix}-U_{k-1}^*\mathcal {P}^*&I\\I&0\\\end{bmatrix}\widetilde{T}
\begin{bmatrix}-\mathcal {P}U_{k-1}&I\\I&0\\\end{bmatrix}\right)^{-1}U_{k}^* =
U_{k}T_{k}^{-1}U_{k}^* ,
\end{aligned}
\end{equation}
which gives the sought after result.
\end{proof}

\begin{corollary} \label{cor:Xksspd}
Assume that $F\succeq 0$ and $G\succeq 0$. If for some $k>0$,  $X_{k-1}\succeq 0$ , then $X_{k}\succeq 0.$
\end{corollary}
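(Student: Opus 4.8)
The plan is to read off the conclusion from the low-rank representation in Proposition~\ref{prop:lowrank}, once its hypotheses have been verified and the intermediate matrix $T_k$ has been shown to inherit positive definiteness. First I would use $X_{k-1}\succeq 0$ to factor $X_{k-1}=U_{k-1}U_{k-1}^*$, i.e. to write it in the form $X_{k-1}=U_{k-1}T_{k-1}^{-1}U_{k-1}^*$ with the Hermitian, nonsingular (indeed positive definite) choice $T_{k-1}=I$; similarly, since $G\succeq 0$, factor $G=C^*C$. By Theorem~\ref{th:existence} --- whose extra hypothesis that $A^*-\alpha_k I$ be nonsingular holds automatically for $A$ real and stable with $\Re(\alpha_k)>0$ --- the matrix $M_k$ is nonsingular, hence $X_k=N_kM_k^{-1}$ is well defined (and Hermitian), which is precisely the standing hypothesis ``$X_k$ well defined'' required by Proposition~\ref{prop:lowrank}.

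Next I would show that the matrix $T_k$ of (\ref{eqn:Tk}) is positive definite. Its first summand is the block matrix $\begin{bmatrix}I&0\\0&2\aa_k I\end{bmatrix}$, which is positive definite because $\aa_k>0$. For the second summand the key observation is that $(-A+\bar\alpha_k I)^{-1}=\big((-A^*+\alpha_k I)^{-1}\big)^*$; therefore, setting $W:=(-A^*+\alpha_k I)^{-1}\begin{bmatrix}U_{k-1}&C^*\end{bmatrix}$ and using $\begin{bmatrix}U_{k-1}^*\\C\end{bmatrix}=\begin{bmatrix}U_{k-1}&C^*\end{bmatrix}^*$, that summand equals $2\aa_k\,W^*FW$, which is positive semidefinite since $F\succeq 0$ and $\aa_k>0$. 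Hence $T_k\succ 0$; in particular $T_k$ is nonsingular, so Proposition~\ref{prop:lowrank} applies and yields $X_k=U_kT_k^{-1}U_k^*$ with $U_k$ as in (\ref{eqn:lowrankrecursion}). Since $T_k\succ 0$ implies $T_k^{-1}\succ 0$, we conclude $X_k=U_kT_k^{-1}U_k^*\succeq 0$.

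I do not expect a genuine obstacle: the statement is essentially a corollary of the two preceding results. The only point deserving a line of care is the identification of the second term of $T_k$ as a congruence $W^*FW$ of $F$, which rests on the conjugate-transpose relation between $(-A+\bar\alpha_k I)^{-1}$ and $(-A^*+\alpha_k I)^{-1}$ together with $F=F^*$; once this is made explicit, the positive definiteness of $T_k$, and hence the positive semidefiniteness of $X_k$, follow immediately.
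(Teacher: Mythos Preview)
Your argument is correct and follows the same route as the paper's proof: factor $X_{k-1}$ so that Proposition~\ref{prop:lowrank} applies, observe that $T_k$ is the sum of a positive definite block-diagonal matrix and a positive semidefinite congruence $2\aa_k W^*FW$, and conclude $X_k=U_kT_k^{-1}U_k^*\succeq 0$. The paper is terser (it simply asserts $T_k\succ 0$ when $F\succeq 0$), while you spell out the congruence structure and the well-definedness of $X_k$ via Theorem~\ref{th:existence}, but the substance is identical.
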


\begin{proof}
The assumption $X_{k-1}\succeq 0$ ensures that $X_{k-1}$ can be written as
$X_{k-1}=U_{k-1}T_{k-1}^{-1}U_{k-1}^*$ with $T_{k-1}\succ 0.$ 
Proposition \ref{prop:lowrank} thus shows that for $G\succeq 0$, $X_k$ can be written as
$X_k = U_k T_k^{-1} U_k^*$ with $T_k$ defined in (\ref{eqn:Tk}). If in addition $F\succeq 0$,
then $T_{k}\succ 0$,  which implies $X_{k}\succeq 0$.
\end{proof}

We conclude this section by showing that the hypothesis that $X_0\succeq 0$ is sufficient for
all subsequent iterates to be well defined.

\begin{proposition}
Suppose $F\succeq 0$ and $G\succeq 0$. Assume that all $\alpha_k$'s have positive real part. 
If $X_0 \succeq 0$,  then all matrices $M_k$, $k =1,2,\ldots$ produced 
by (\ref{eqn:basic_rec})  are nonsingular.
\end{proposition}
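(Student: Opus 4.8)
The plan is to prove, by induction on $k$, the slightly stronger statement that $X_k\succeq 0$ for every $k\ge 0$; the nonsingularity of all the $M_k$'s is then immediate. The only ingredients are two facts already available in this section: Theorem~\ref{th:existence}, which turns the hypothesis $X_{k-1}\succeq 0$ into the nonsingularity of $M_k$ and the well-posedness and Hermitian symmetry of $X_k$, and Corollary~\ref{cor:Xksspd}, which propagates positive semidefiniteness from $X_{k-1}$ to $X_k$.

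The base case is the hypothesis $X_0\succeq 0$. For the inductive step, assume $X_{k-1}\succeq 0$ for some $k\ge 1$. First I would record that $A^*-\alpha_k I$ is nonsingular: since $A$ is real and stable and $\Re(\alpha_k)>0$, every eigenvalue of $A^*-\alpha_k I$ has strictly negative real part, as already observed after Theorem~\ref{th:existence}; the same remark shows $-A^*+\alpha_k I$ and $-A+\bar\alpha_k I$ are nonsingular, hence $S_1,S_2$ and the factor $T_k$ of (\ref{eqn:Tk}) are well defined, while the invertibility of $\Ham+\alpha_k I$ needed for (\ref{eqn:basic_rec}) to produce $M_k$ is part of the standing assumptions on the parameters. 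With $F,G\succeq 0$ and $X_{k-1}\succeq 0$, Theorem~\ref{th:existence}$(i)$ gives that $M_k$ is nonsingular, and Theorem~\ref{th:existence}$(ii)$ gives that $X_k=N_kM_k^{-1}$ is well defined with $X_k=X_k^*$; Corollary~\ref{cor:Xksspd} then yields $X_k\succeq 0$. This closes the induction: $X_{k-1}\succeq 0$ for every $k\ge 1$, and therefore $M_k$ is nonsingular for every $k\ge 1$ by Theorem~\ref{th:existence}$(i)$, i.e., the iteration (\ref{eqn:basic_rec}) never breaks down.

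I do not expect a genuine obstacle here; the proof only requires checking that the hypotheses of Theorem~\ref{th:existence} and Corollary~\ref{cor:Xksspd} hold at each step. The one point worth a moment's care is that the resolvents implicitly entering those results --- $(A^*-\alpha_k I)^{-1}$ in Theorem~\ref{th:existence}, and $(-A^*+\alpha_k I)^{-1}$, $(-A+\bar\alpha_k I)^{-1}$ in the low-rank factorization that Corollary~\ref{cor:Xksspd} inherits from Proposition~\ref{prop:lowrank} --- all exist, which is immediate from the stability of $A$ and $\Re(\alpha_k)>0$. If one prefers not to invoke Corollary~\ref{cor:Xksspd} as a black box, the same step can be done directly: write $X_{k-1}=U_{k-1}T_{k-1}^{-1}U_{k-1}^*$ with $T_{k-1}\succ 0$, apply Proposition~\ref{prop:lowrank}, and note that in (\ref{eqn:Tk}) the matrix $T_k$ is the sum of the block diagonal matrix with diagonal blocks $T_{k-1}\succ 0$ and $2\aa_k I\succ 0$ and of a matrix of the form $V^*\big((-A^*+\alpha_k I)^{-*}F(-A^*+\alpha_k I)^{-1}\big)V\succeq 0$, so $T_k\succ 0$ and $X_k=U_kT_k^{-1}U_k^*\succeq 0$.
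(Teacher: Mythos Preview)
Your proposal is correct and follows essentially the same route as the paper: an induction on $k$ using Theorem~\ref{th:existence} to get $M_k$ nonsingular and $X_k$ well defined, then Corollary~\ref{cor:Xksspd} to propagate $X_k\succeq 0$. Your additional remarks on the existence of the resolvents and the optional direct unpacking of Corollary~\ref{cor:Xksspd} via the positive definiteness of $T_k$ are sound and in fact more detailed than the paper's own three-line argument.
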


\begin{proof}
Theorem \ref{th:existence} states that if 
$X_{k-1}\succeq 0$, then $M_{k}$ is nonsingular and $X_k$ is well defined. 
Corollary \ref{cor:Xksspd} states that if $X_{k-1}\succeq 0$, then $X_{k}\succeq 0$.  
Therefore, choosing $X_0 \succeq 0$,  by induction it follows that
 $M_k$, $k =1,2,\ldots$ produced by (\ref{eqn:basic_rec}) will be nonsingular.
\end{proof}

\section{Considerations on convergence} 
In this section we derive a bound on the angle between the 
approximate and exact invariant subspaces. The result follows classical
strategies to estimate the convergence of subspace iteration, and it
provides a worst case scenario on the actual convergence rate
of the iteration.

We first need to recall some definitions and known relations.  
Let $\sigma(A)$ denote the set of eigenvalues of $A$,
and $D_n(\Ham^*)$ the left c-stable invariant subspace of $\Ham$ (\cite[p.333]{MR1417720}).
Let $\Ham = Q T Q^*$ be the Schur decomposition of $\Ham$, with
\begin{equation}\label{eqn:schur}
Q=\begin{bmatrix}Q_1 & Q_2\end{bmatrix}, \quad T=\begin{bmatrix}T_{11} & T_{12}\\0 &T_{22} \end{bmatrix}, \quad
\sigma(T_{11})\subset \mathbb{C}_-\,,\,\, \sigma(T_{22}) \subset \mathbb{C}_+.
\end{equation}
Then for every $k$ and $\Re(\alpha_k)>0$, 
the Cayley transformation has Schur decomposition $\Sym_k=QT_{(k)}Q^*,$ where 
$T_{(k)}:=\begin{bmatrix}T_{11(k)} & T_{12(k)}\\0 &T_{22(k)} \end{bmatrix}$
with $T_{11(k)}=(T_{11}+\alpha_k I)^{-1}(T_{11}-\bar \alpha_k I) $ having all eigenvalues outside the
unit disk,
while $T_{22(k)}=(T_{22}+\alpha_k I)^{-1}(T_{22}-\bar \alpha_k I)$ has all eigenvalues in the unit disk.

Given two subspaces  ${\mathfrak S}_1$ and ${\mathfrak S}_2$ of $\mathbb{C}^n$ of equal dimension,
their distance is given by (see, e.g., \cite[p.76]{MR1417720}) 
$$
\dist({\mathfrak S}_1,{\mathfrak S}_2)=\|P_1-P_2\|_2,
$$
where $P_i$ is the orthogonal projection matrix onto ${\mathfrak S}_i$.
Finally, (see, e.g., \cite[p.325]{MR1417720})
$$
{\rm sep}(T_{11},T_{22}):=\min_{X\neq 0} \frac{\|T_{11}X-XT_{22}\|_F}{\|X\|_F} .
$$
We are ready to give the main result of this section, whose proof is postponed to the appendix.

\begin{theorem}\label{th:convth}
Let
$\begin{bmatrix}I\\X_0\end{bmatrix}=U_0R_0$ be the skinny QR decomposition of $[I;X_0]$, 
and assume that
$X_0$ is such that 
$$
d=\dist\left(D_n(\Ham^*),\Range\left(\begin{bmatrix}I\\X_0\end{bmatrix}\right)\right)<1 .
$$
If for any $k>0$, the matrix $M_k$ in the iteration (\ref{eqn:basic_rec}) is nonsingular,
then the associated iterate $X_k$ satisfies
\begin{equation}
\begin{aligned}
\dist\left(\Range(\begin{bmatrix}I\\X_+\end{bmatrix}),\Range(\begin{bmatrix}I\\X_k\end{bmatrix})\right)
\leq \gamma 
\left\|\prod_{i=k}^1 T_{22(i)}\right\|_2\left\|\prod_{i=1}^k T_{11(i)}^{-1}\right\|_2
\end{aligned}
\end{equation}
where $\gamma=\frac{\|R_0^{-1}\|_2}{\sqrt{1-d^2}}\left( 1+\frac{\|T_{12}\|_F}{{\rm sep}(T_{11},T_{22})}\right)$.
\end{theorem}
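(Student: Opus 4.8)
The plan is to follow the classical subspace iteration convergence analysis (as in Golub--Van Loan / Stewart, cf.\ \cite{MR1417720}), adapted here to the product of the Cayley-transformed matrices $\Sym(\alpha_i)$ and to the fact that the iterates are presented in the form $N_k M_k^{-1}$ rather than after an explicit orthogonalization. First I would record that the iteration (\ref{eqn:basic_rec})--(\ref{eqn:Xnew}) is, up to a change of basis, nothing but block power iteration: after $k$ steps, $\Range([I;X_k]) = \Range\bigl(\Sym(\alpha_k)\cdots\Sym(\alpha_1)[I;X_0]\bigr)$, because each step applies $\Sym(\alpha_k)$ to the current basis $[I;X_{k-1}]$ and then renormalizes the first block to the identity via multiplication by $M_k^{-1}$ on the right --- a right multiplication does not change the column space, and nonsingularity of $M_k$ is exactly the hypothesis guaranteeing this renormalization is legitimate. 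So the whole statement reduces to bounding $\dist\bigl(\Range([I;X_+]),\,\Range(\Phi_k [I;X_0])\bigr)$ where $\Phi_k := \prod_{i=k}^1 \Sym(\alpha_i)$.

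Next I would pass to the Schur basis $Q=[Q_1\;Q_2]$ of (\ref{eqn:schur}). Writing the block triangular Schur form $T$ in a block-diagonalized form via the solution $Z$ of the Sylvester equation $T_{11}Z - Z T_{22} = -T_{12}$ (which exists and is unique since $\sigma(T_{11})\cap\sigma(T_{22})=\emptyset$, with $\|Z\|_F \le \|T_{12}\|_F/{\rm sep}(T_{11},T_{22})$), one gets that the exact stable invariant subspace $D_n(\Ham^*) = \Range(Q_1)$ and, more to the point, each $\Sym(\alpha_i)$ is block triangular in the $Q$-basis with diagonal blocks $T_{11(i)}$ (eigenvalues outside the unit disk) and $T_{22(i)}$ (eigenvalues inside). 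Decompose $[I;X_0]$ in the $Q$-basis as $Q_1 W_1 + Q_2 W_2$; the hypothesis $d<1$ guarantees $W_1$ is nonsingular, so $\Range([I;X_0]) = \Range\bigl(Q(\begin{smallmatrix}I\\ E_0\end{smallmatrix})\bigr)$ with $E_0 = W_2 W_1^{-1}$ and $\|E_0\|_2 \le d/\sqrt{1-d^2}$. Then tracking how the block-triangular action of $\Phi_k$ transforms $E_0$: the ``error block'' after $k$ steps becomes (in the block-diagonalized coordinates) $\bigl(\prod_{i=k}^1 T_{22(i)}\bigr)\,\tilde E_0\,\bigl(\prod_{i=1}^k T_{11(i)}^{-1}\bigr)$, and the distance between two subspaces of the form $\Range(Q_1 + Q_2 E)$ is controlled by $\|E\|_2$ up to the norm of the change of basis between the triangular and block-diagonal Schur coordinates, which contributes the factor $1+\|T_{12}\|_F/{\rm sep}(T_{11},T_{22})$ and the factor $\|R_0^{-1}\|_2$ coming from relating the skinny-QR factor $U_0$ back to $[I;X_0]$ (so that the estimate is stated in terms of the given data). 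Collecting the constant $\gamma=\frac{\|R_0^{-1}\|_2}{\sqrt{1-d^2}}\bigl(1+\frac{\|T_{12}\|_F}{{\rm sep}(T_{11},T_{22})}\bigr)$ and the two operator-norm products yields the claimed bound.

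The main obstacle, and the reason the proof is relegated to an appendix, is bookkeeping the several changes of basis cleanly: one must move from the $N_k M_k^{-1}$ presentation to a genuine orthonormal basis of the iterated subspace, then from the standard basis to the Schur basis $Q$, then from the triangular Schur coordinates to the block-diagonal ones (via $Z$), keeping track at each stage of which norms appear and that nonsingularity is preserved (in particular that the first block stays invertible so $X_k$ remains well defined --- this is where the hypothesis ``$M_k$ nonsingular for all $k$'' is used, rather than re-derived). A secondary technical point is that the $\alpha_i$ are complex and step-dependent, so the diagonal blocks $T_{11(i)}$, $T_{22(i)}$ do not commute; hence the bound must be kept as ordered products $\prod_{i=k}^1$ and $\prod_{i=1}^k$ and one cannot collapse them into powers. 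Care is also needed to verify that $\|T_{22(i)}\|_2$ and $\|T_{11(i)}^{-1}\|_2$ are indeed the relevant quantities (each strictly less than the conditioning would suggest, since the spectra lie strictly inside, resp.\ outside, the unit disk), which is what makes the product contractive and gives convergence. None of these steps is deep, but assembling them without sign or transpose errors is the delicate part; I would organize the appendix proof around the three named changes of basis above, proving one lemma per change.
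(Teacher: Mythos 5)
Your plan is essentially the paper's appendix proof: pass to the Schur basis, block-diagonalize via the Sylvester solution, telescope the block-diagonal action of $\prod_i\Sym(\alpha_i)$ into the ordered products $\prod T_{22(i)}$ and $\prod T_{11(i)}^{-1}$, and then unwind the changes of basis to assemble $\gamma$. The only place where your sketch is imprecise, and where the paper has to work a bit harder, is the nonsingularity step: you claim that $d<1$ directly forces the first block $W_1=Q_1^*[I;X_0]$ to be nonsingular, but $d$ measures the distance to $D_n(\Ham^*)=\Range(Q_1-Q_2K^*)$, which is the first block \emph{column} of $P^{-*}$, not $\Range(Q_1)$. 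What is actually needed for the telescoping identity is nonsingularity of the first block in the block-diagonalized coordinates, i.e.\ $V_0-KW_0$ (your $\tilde E_0$ normalizer), and the paper gets this by constructing an orthonormal basis $Z=Q[I;-K^*](I+KK^*)^{-1/2}$ for $D_n(\Ham^*)$ and writing $V_0-KW_0=(I+KK^*)^{1/2}(Z^*U_0)R_0$, so that $d<1\Leftrightarrow\sigma_{\min}(Z^*U_0)>0$ delivers both the nonsingularity and the constant $\|R_0^{-1}\|_2/\sqrt{1-d^2}$ in one stroke. If you tighten your sketch at that single point, it matches the paper's argument.
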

%
%

\vskip 0.1in

Theorem \ref{th:convth} shows that the distance between the exact and approximate
subspaces is bounded in terms of the norms
of the products of the $T_{22(i)}$'s and $T_{11(i)}^{-1}$'s.
From their definition, it holds that
 $\rho(T_{22(i)})<1$  and $\rho(T_{11(i)}^{-1})<1$ for $i=1,\ldots,k$, where
$\rho(T)$ denotes the spectral radius of a square matrix $T$.
Therefore, both norms 
 $\left\|\prod_{i=k}^1 T_{22(i)}\right\|_2$ and 
$\left\|\prod_{i=1}^k T_{11(i)}^{-1}\right\|_2$ tend to zero as $k\to \infty$, thus
ensuring convergence of the iteration. At the same time, the bound shows that
the rate of convergence will depend on the distance of the eigenvalues from
the unit circle. The parameters have the role of optimizing somehow this
distance (cf. section \ref{sec:param}).

Theorem \ref{th:convth} also requires a condition on the initial approximation $X_0$.
A very simple choice of $X_0$, the zero matrix, turns out to satisfy
such hypothesis.

\begin{proposition}
Assume that $A$ is stable and $F,G\succeq 0$.
If  $X_0=0$, then 
${\rm dist}\left(D_n(\Ham^*),{\rm Range}\left(\begin{bmatrix}I\\X_0\end{bmatrix}\right)\right)<1.$
\end{proposition}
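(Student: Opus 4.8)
The plan is to show that the distance
$d=\dist\left(D_n(\Ham^*),\Range\left(\begin{bmatrix}I\\0\end{bmatrix}\right)\right)$
is strictly less than one by arguing that the two subspaces involved are not orthogonal, and more precisely that the orthogonal projection of the range of $[I;0]$ onto $D_n(\Ham^*)$ is injective (has trivial kernel on that $n$-dimensional space). Recall that $D_n(\Ham^*)$ is the left c-stable invariant subspace of $\Ham$, which in the notation of the Schur decomposition (\ref{eqn:schur}) is $\Range(Q_1)$, spanned by the columns of $[I;X_+]$ (up to a change of basis), since $[I;X_+]$ spans the stable invariant subspace of $\Ham$ and $X_+=X_+^*$ under the stated hypotheses. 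The key classical fact I would invoke is that $\dist(\mathfrak S_1,\mathfrak S_2)<1$ if and only if $\mathfrak S_1\cap \mathfrak S_2^\perp=\{0\}$ (equivalently, the orthogonal projection from one onto the other is a bijection); see e.g. \cite[p.76]{MR1417720}.

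First I would make the identification $D_n(\Ham^*)=\Range\left(\begin{bmatrix}I\\X_+\end{bmatrix}\right)$ explicit, using that $A$ stable and $F,G\succeq 0$ guarantee the existence of the stabilizing solution $X_+$ with $X_+=X_+^*$ (in fact $X_+\succeq 0$), together with the splitting assumption (\ref{eqn:csplitting}). Second, I would reduce the claim $d<1$ to showing
$$
\Range\left(\begin{bmatrix}I\\X_+\end{bmatrix}\right)\ \cap\ \Range\left(\begin{bmatrix}I\\0\end{bmatrix}\right)^\perp=\{0\}.
$$
The orthogonal complement of $\Range([I;0])$ in $\CC^{2n}$ is $\Range([0;I])$, i.e.\ vectors of the form $[0;w]$. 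Third, suppose $[I;X_+]v=[0;w]$ for some $v\in\CC^n$; then the first block gives $v=0$, hence $w=0$, so the intersection is trivial. This immediately yields $d<1$.

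The main obstacle is not any computation but rather pinning down the precise identification of $D_n(\Ham^*)$ as a subspace of $\CC^{2n}$ and confirming that it coincides with $\Range([I;X_+])$ under the paper's conventions; one must be careful about whether ``left c-stable invariant subspace of $\Ham$'' refers to $\Ham$ or $\Ham^*$ and about the corresponding sign/conjugation of the spectrum, so that the relevant subspace is indeed the one spanned by $[I;X_+]$ and not its symplectic complement. Once that identification is secured, the argument is the short transversality observation above. I would also note for completeness that this same reasoning shows $d<1$ for \emph{any} $X_0$ with $X_0=X_0^*$ such that $[I;X_0]$ has full column rank, since the first block of $[I;X_0]v=[0;w]$ again forces $v=0$; the choice $X_0=0$ is merely the simplest instance.
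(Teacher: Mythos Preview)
Your overall strategy---show that $D_n(\Ham^*)$ has the graph form $\Range\!\begin{bmatrix}I\\M\end{bmatrix}$ for some $n\times n$ matrix $M$, then observe that such a subspace is transversal to $\Range\!\begin{bmatrix}0\\I\end{bmatrix}=\Range\!\begin{bmatrix}I\\0\end{bmatrix}^{\!\perp}$---is exactly the paper's approach, phrased geometrically rather than via the explicit singular-value computation $\|M(I+M^*M)^{-1/2}\|_2<1$. The gap is in your identification of $M$. The space $D_n(\Ham^*)$ is \emph{not} $\Range(Q_1)=\Range\!\begin{bmatrix}I\\X_+\end{bmatrix}$; in the notation of the appendix it is $\Range(Q_1-Q_2K^*)$, the stable invariant subspace of $\Ham^*$. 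Since $\Ham^*=\begin{bmatrix}A^*&-G\\-F&-A\end{bmatrix}$ is again Hamiltonian (with $A$ replaced by $A^*$ and the roles of $F,G$ swapped), its stable invariant subspace is $\Range\!\begin{bmatrix}I\\Z_+\end{bmatrix}$, where $Z_+\succeq 0$ is the stabilizing solution of the \emph{dual} Riccati equation $F+AZ+ZA^*-ZGZ=0$; the paper obtains its existence from stabilizability of $(A^*,G)$, which follows from $A$ stable. You flagged this identification as the main obstacle, and indeed it is the place where your write-up goes wrong. Once $X_+$ is replaced by $Z_+$, your transversality argument is correct and equivalent to the paper's.

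Your closing remark about general $X_0=X_0^*$ is also off: for $X_0\neq 0$ the relevant condition is $\Range\!\begin{bmatrix}I\\Z_+\end{bmatrix}\cap\Range\!\begin{bmatrix}I\\X_0\end{bmatrix}^{\!\perp}=\{0\}$, and $\Range\!\begin{bmatrix}I\\X_0\end{bmatrix}^{\!\perp}=\Range\!\begin{bmatrix}-X_0^*\\I\end{bmatrix}$, not $\Range\!\begin{bmatrix}0\\I\end{bmatrix}$. The equation ``$[I;X_0]v=[0;w]\Rightarrow v=0$'' that you wrote is therefore not the right test; the correct condition reduces to $I+X_0^*Z_+$ being nonsingular, which does hold for $X_0\succeq 0$ (the eigenvalues of $X_0Z_+$ are nonnegative) but not for arbitrary Hermitian $X_0$.
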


\begin{proof}
We have that $(A^*,G)$ is stabilizable 
(see, e.g., \cite[p.12]{Binietal.book.12}), that is there exists $Z_+\succeq 0$ such that
$F+AZ+ZA^*-ZGZ=0$, with $\sigma(A^*-GZ_+)\subset \CC^-$ and
\begin{equation}
\begin{aligned}
&\begin{bmatrix}A^*&-G\\-F&-A\end{bmatrix}
\begin{bmatrix}I\\Z_+\end{bmatrix}=\begin{bmatrix}I\\Z_+\end{bmatrix}(A^*-GZ_+) .
\end{aligned}
\end{equation}
Therefore, $D_n(\Ham^*)=\range\left(\begin{bmatrix}I\\Z_+\end{bmatrix}\right)=
\range\left (\begin{bmatrix}I\\Z_+\end{bmatrix}(I+Z_+^* Z_+)^{-\frac{1}{2}}\right)$, 
where the last matrix in parentheses has orthonormal columns.
Thus,
\begin{eqnarray*}
\dist\left(D_n(\Ham^*),\Range\left(\begin{bmatrix}I\\0\end{bmatrix}\right) \right)
&=&\left\|\begin{bmatrix}0&I\end{bmatrix}\begin{bmatrix}I\\Z_+\end{bmatrix}
(I+Z_+^* Z_+)^{-\frac{1}{2}}\right\|_2 \\
&=&\left\|Z_+(I+Z_+^* Z_+)^{-\frac{1}{2}} \right\|_2<1,
\end{eqnarray*}
where the strict inequality follows from the fact that the (1,1) block of the 
orthonormal basis is nonsingular.
\end{proof}

\section{Subspace iteration for large scale data}

Whenever the problem dimension is very large, the approximate solution matrix 
as expressed in (\ref{eqn:Xnew}) cannot
be explicitly stored. However, if both $F$ and $G$ are low rank, it is possible
to derive a correspondingly low rank factorization of $X_k$ which can be
handled more cheaply.
Proposition \ref{prop:lowrank} exactly shows how to obtain such a form for $X_k$,
and how to update the approximation by increasing the rank at each iteration.
Assuming $G=C^*C$ is low rank, the resulting recursion is given in Algorithm \ref{alg:1}.
We stress that any initial approximation $X_0$ written as $X_0=U_0T_0^{-1}U_0^*$
can be used. Moreover, we notice that the algorithm will not break down if $\alpha_k$ is
an eigenvalue of $\Ham$, as long as $-A^*+\alpha_k I$ is nonsingular, 
the latter being the
only hypothesis required in practice.

\begin{algorithm}[tbh]
    \caption{LRSI: Low-rank Subspace Iteration. Generic implementation. \label{alg:1}}

    \begin{algorithmic}[1]

\STATE INPUT: Given $U_0, T_0$ such that $X_0=U_0T_0^{-1}U_0^*$, and $\alpha_k$, $k=1,2,\ldots$, with $\aa_k=\Re(\alpha_k)$
\FOR{$k=1,2,3, \ldots $}
    \STATE $U_k:=\begin{bmatrix}(-A^*+\alpha_k I)^{-1}(-A^*-\bar\alpha_k I)U_{k-1},&-2\aa_k(-A^*+\alpha_k I)^{-1}C^*\end{bmatrix}$
    \STATE
    $T_k:=\begin{bmatrix}T_{k-1}&0\\0&2\aa_k I\end{bmatrix}
    +2\aa_k\begin{bmatrix}U_{k-1}^*\\C\end{bmatrix} (-A+\bar \alpha_k I)^{-1}F(-A^*+\alpha_k I)^{-1}\begin{bmatrix}U_{k-1}&C^*\end{bmatrix}$
\ENDFOR

 \STATE OUTPUT: $U_k, T_k$ such that $X_k=U_kT_k^{-1}U_k^* \approx X_+$

    \end{algorithmic}
\end{algorithm}

A more effective low rank recursion is obtained by noticing that the term
$$
\begin{bmatrix}U_{k-1}^*\\C\end{bmatrix} (-A+\bar\alpha_k I)^{-1}F(-A^*+\alpha_k I)^{-1}\begin{bmatrix}U_{k-1}&C^*\end{bmatrix}
$$
can be computed without explicitly computing the $n\times n$ inner matrix. 
This operation is particularly
cheap if $F=BB^*$ with $B$ having low column rank.
A closer look at the recurrence matrix 
\begin{eqnarray}\label{eqn:Uk}
U_k:=\begin{bmatrix}(-A^*+\alpha_k I)^{-1}
(-A^*-\bar \alpha_k I)U_{k-1},&-2\aa_k(-A^*+\alpha_k I)^{-1}C^*\end{bmatrix}
\end{eqnarray}
reveals that, except for an innocuous scaling factor,
this is precisely the same iteration matrix obtained when using LR-ADI 
\cite[formulas (4.6)-(4.7)]{Li2002},\cite{MR1742324}. In particular,
when the nonlinear term is zero ($F=0$), the recurrence in Algorithm \ref{alg:1}
corresponds to the LR-ADI iteration. 
As a consequence, we obtain that
$$
{\rm Range}(U_k) = {\rm Range}([(-A^* + \alpha_1 I)^{-1}C^*, \ldots, (-A^* + \alpha_k I)^{-1}C^*]),
$$
namely the generated space is the rational Krylov subspace with poles $\alpha_1, \ldots, \alpha_k$,
$k\ge 1$ \cite[Proposition 7.3]{Li2002}. 
From $X_k = U_k T_k^{-1} U_k^*$ it thus follows that a different basis for the Rational
Krylov subspace could be selected to equivalently 
define $X_k$. More precisely, letting $Q_k$ be any nonsingular matrix
of size equal to the number of columns of $U_k$, then the columns of $U_kQ_k$ are still
a basis for the space, and $X_k = (U_k Q_k) (Q_k^{-1}T_k^{-1}Q_k^{-*}) (U_kQ_k)^*$.
This property is particularly important, as
the matrices $U_k$ in (\ref{eqn:Uk})
 are not efficiently nested: the number of system
solves per iteration increases with the number of iterations.
In the next section we derive a more economical low rank recurrence.

\subsection{An incremental low rank recursion}
We next express the approximate solution in terms
 of a {nested} basis spanning the rational Krylov subspace,
which only requires one system solve with $-A^*+\alpha_k I$ at iteration $k$ 
to expand the space. This is based on the observation that the given basis
is nested for $X_0=0$.

To simplify the presentation here and in the following we shall work
with the corresponding rational function scalar bases. When employing matrices, the
symbol $\lambda$ should be replaced by $-A^*$, while the matrix $C^*$ should end each 
term:  $1/(\lambda+\alpha_k)$ should therefore read $(-A^*+\alpha_k I)^{-1} C^*$.
With this notation, we are going to employ the following basis:
\begin{equation}\label{eqn:nestedV}
\begin{aligned}
V_k:=\begin{bmatrix}\frac{-2\aa_1}{\lambda+\alpha_1 },&\frac{-2\aa_2}{\lambda+\alpha_2 }\frac{\lambda-\bar\alpha_1}{\lambda+\alpha_1},&\ldots &\frac{-2\aa_{k-1}}{\lambda+\alpha_{k-1} }
\displaystyle\prod_{i=1}^{k-2}\frac{\lambda-\bar\alpha_i}{\lambda+\alpha_i},&
\frac{-2\aa_k}{\lambda+\alpha_k }
\displaystyle\prod_{i=1}^{k-1}\frac{\lambda-\bar\alpha_i}{\lambda+\alpha_i}
\end{bmatrix}.\\
\end{aligned}
\end{equation}

\begin{lemma}\label{lemma:Xk-1}
Let $V_k$ be the matrix associated with (\ref{eqn:nestedV}).
If, for some $k>0$, it holds that
$X_{k-1}=V_{k-1}T_{k-1}^{-1}V_{k-1}^*$,
then there exists a nonsingular matrix $Q_k$ such that
$\begin{bmatrix}
-2\aa_k(-A^*+\alpha_k I)^{-1}V_{k-1},&-2\aa_k(-A^*+\alpha_k I)^{-1}C^*
\end{bmatrix}Q_k =V_k$, and a nonsingular $T_k$ such that $X_k=V_kT_k^{-1}V_k^*$.
\end{lemma}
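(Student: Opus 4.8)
The plan is to derive both claims from Proposition~\ref{prop:lowrank} together with the elementary basis-change identity already recorded before the statement: if $X=UT^{-1}U^*$ with $T$ Hermitian and nonsingular and $Q$ is nonsingular, then $X=(UQ)(Q^*TQ)^{-1}(UQ)^*$. Throughout I work, as in Proposition~\ref{prop:lowrank}, under the assumption that $X_k$ is well defined, and under the standard non-breakdown assumption that the rational Krylov subspace generated by the poles $\alpha_1,\dots,\alpha_k$ has full dimension, equal to the number of columns of $V_k$, so that the various matrices below are honest (full column rank) bases.

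First I would name the three matrices in play, using the rational-function convention of \eqref{eqn:nestedV} ($\lambda\leftrightarrow -A^*$, each term ending with $C^*$): the claimed nested basis $V_k$; the matrix $W_k:=\bigl[-2\aa_k(-A^*+\alpha_k I)^{-1}V_{k-1},\ -2\aa_k(-A^*+\alpha_k I)^{-1}C^*\bigr]$ appearing in the statement; and the matrix $\widehat U_k:=\bigl[(-A^*+\alpha_k I)^{-1}(-A^*-\bar\alpha_k I)V_{k-1},\ -2\aa_k(-A^*+\alpha_k I)^{-1}C^*\bigr]$ produced by Proposition~\ref{prop:lowrank} when applied with the factorization $X_{k-1}=V_{k-1}T_{k-1}^{-1}V_{k-1}^*$ --- this is legitimate because that proposition accepts any Hermitian nonsingular $T_{k-1}$. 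The structural observation is that the $j$-th column of $V_k$ does not depend on $k$, so that $V_k=[V_{k-1},\,v_k]$ with $v_k=\frac{-2\aa_k}{\lambda+\alpha_k}\prod_{i=1}^{k-1}\frac{\lambda-\bar\alpha_i}{\lambda+\alpha_i}$. Using $\frac{\lambda-\bar\alpha_k}{\lambda+\alpha_k}=1-\frac{2\aa_k}{\lambda+\alpha_k}$, a short degree count then identifies all three column spaces with the full rational Krylov subspace $\mathcal R_k$ of proper rational functions with denominator $\prod_{i=1}^{k}(\lambda+\alpha_i)$: the columns of $V_{k-1}$ span the corresponding space $\mathcal R_{k-1}$ (they are linearly independent since $\Re(\alpha_i)>0$ rules out pole cancellation, so their denominators have strictly increasing degree), and in each of $V_k$, $W_k$, $\widehat U_k$ the remaining column contributes a numerator of degree exactly $k-1$, which together with $\frac{1}{\lambda+\alpha_k}\mathcal R_{k-1}$ (resp.\ $\mathcal R_{k-1}$) exhausts $\mathcal R_k$.

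With $\range(W_k)=\range(\widehat U_k)=\range(V_k)=\mathcal R_k$ and all matrices of full column rank, there is a unique matrix $Q_k$ with $W_kQ_k=V_k$, and it is square and nonsingular (its kernel equals $\ker V_k=\{0\}$): this is the first assertion. For the second, Proposition~\ref{prop:lowrank} gives $X_k=\widehat U_k\widehat T_k^{-1}\widehat U_k^*$ with $\widehat T_k$ the Hermitian matrix \eqref{eqn:Tk} (with $U_{k-1}$ replaced by $V_{k-1}$), which is nonsingular because $X_k$ is well defined. Since $\widehat U_k$ and $V_k$ span the same space and both have full column rank, there is a nonsingular $P_k$ with $\widehat U_k=V_kP_k$; substituting and applying the basis-change identity, $X_k=V_k\bigl(P_k^{-*}\widehat T_kP_k^{-1}\bigr)^{-1}V_k^*=:V_kT_k^{-1}V_k^*$, with $T_k:=P_k^{-*}\widehat T_kP_k^{-1}$ Hermitian and nonsingular.

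The main obstacle is the second paragraph: one has to check carefully that the three formally different matrices $V_k$, $W_k$, $\widehat U_k$ generate one and the \emph{same} rational Krylov subspace and that this subspace has full dimension, since this is exactly what makes the change-of-basis matrices $Q_k$ and $P_k$ square and invertible; once that is in place the identification of $T_k$ is just the identity $T\mapsto Q^*TQ$ applied twice. A secondary point worth stating explicitly is the list of hypotheses inherited from Proposition~\ref{prop:lowrank} --- in particular that $X_k$ (equivalently $M_k$) is well defined --- since these are what guarantee the nonsingularity of $\widehat T_k$, and hence of $T_k$.
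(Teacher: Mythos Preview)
Your proof is correct and follows essentially the same route as the paper: apply Proposition~\ref{prop:lowrank} with $V_{k-1}$ in place of $U_{k-1}$, argue that the resulting matrix $\widehat U_k$, the matrix $W_k$, and $V_k$ all span the same rational Krylov subspace, and change basis. The only extra ingredient in the paper's version is the explicit identity $\widehat U_k = W_k + [V_{k-1},\,0]$, which yields $P_k = Q_k^{-1} + \mathrm{blkdiag}(I,0)$ and hence the concrete formula~\eqref{eqn:T} used downstream; this is not needed for the bare existence claimed in the lemma.
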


\begin{proof}
From the recursion we get $X_k=\widetilde{U}_k\widetilde{T}_k^{-1}\widetilde{U}_k^*$, where
\begin{equation}
\begin{aligned}
\widetilde{U}_k&:=\begin{bmatrix}(-A^*+\alpha_k I)^{-1}(-A^*-\bar\alpha_k I)V_{k-1},&
-2\aa_k(-A^*+\alpha_k I)^{-1}C^*\end{bmatrix}\\
&=\begin{bmatrix}-2\aa_k(-A^*+\alpha_k I)^{-1}V_{k-1},&
-2\aa_k(-A^*+\alpha_k I)^{-1}C^*\end{bmatrix}+
\begin{bmatrix}V_{k-1},&0\end{bmatrix} , \\
\widetilde{T}_k&:=\begin{bmatrix}T_{k-1}&0\\0&2\aa_k I\end{bmatrix}
    +2\aa_k\begin{bmatrix}V_{k-1}^*\\C\end{bmatrix} (-A+\bar\alpha_k I)^{-1}F(-A^*+\alpha_k I)^{-1}\begin{bmatrix}V_{k-1}&C^*\end{bmatrix}\\
&=\begin{bmatrix}T_{k-1}&0\\0&2\aa_k I\end{bmatrix}
+\frac{1}{2\aa_k}Z F Z^*,
\end{aligned}
\end{equation}
where $Z^* = 
\begin{bmatrix}-2\aa_k(-A^*+\alpha_k I)^{-1}V_{k-1},&-2\aa_k(-A^*+\alpha_k I)^{-1}C^*\end{bmatrix}$.
Since $\Range(Z^*)$ = $\Range(V_k)$, 
there exists $Q_k$ such that
$Z^* = V_k Q_k^{-1}$.
Therefore, 
$\widetilde{U}_k=V_kP_k$, where $P_k:=Q_k^{-1}+\begin{bmatrix}I&\\&0\end{bmatrix}$.

Then $X_k=\widetilde{U}_k\widetilde{T}_k^{-1}\widetilde{U}_k^*=V_k P_k\widetilde{T}_k^{-1}P_k^*V_k^*=V_k(P_k^{-*}\widetilde{T}_kP_k^{-1})^{-1}V_k^*.$
After defining  
\begin{eqnarray}\label{eqn:T}
T_k:=P_k^{-*}\widetilde{T}_kP_k^{-1} , 
\end{eqnarray}
we obtain $X_k=V_kT_k^{-1}V_k^*$.
\end{proof}

Lemma \ref{lemma:Xk-1} shows that
if we can find $Q_k$ explicitly, then we can
update  $X_k=V_kT_k^{-1}V_k^*$ from $X_{k-1}=V_{k-1}T_{k-1}^{-1}V_{k-1}^*$.

In the following, we shall make repeated use of the following simple relation,
which holds for any (not necessarily distinct)  $\alpha_i$, $\alpha_j$:
\begin{equation*}
\begin{aligned}
\frac{\alpha_i-\alpha_j}{(\lambda+\alpha_i)(\lambda+\alpha_j)}+\frac{1}{\lambda+\alpha_i}=\frac{1}{\lambda+\alpha_j}.
\end{aligned}
\end{equation*}

\begin{proposition}
Assume the hypotheses and notation of Lemma \ref{lemma:Xk-1} hold. Then
for any $k>0$, $X_k=V_k T_k^{-1} V_k^*$ with $T_k$ as defined in (\ref{eqn:T}) and
\begin{eqnarray*}
V_k&=&[-2\aa_1 (-A^*+\alpha_1 I)^{-1} C^*, -2\aa_2 (-A^* +\alpha_2I)^{-1}\varphi_1(A) C^*,
\\
& & 
\quad \cdots, 
\quad -2\aa_k (-A^* +\alpha_kI)^{-1} \prod_{i=1}^{k-1}\varphi_i(A)C^*],
\end{eqnarray*}
with $\varphi_i(\lambda)=(\lambda-\bar\alpha_i)/(\lambda+\alpha_i)$.
\end{proposition}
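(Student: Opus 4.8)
The plan is to prove the formula by induction on $k$, using Lemma~\ref{lemma:Xk-1} as the engine: Lemma~\ref{lemma:Xk-1} already guarantees that if $X_{k-1}=V_{k-1}T_{k-1}^{-1}V_{k-1}^*$ then $X_k=V_kT_k^{-1}V_k^*$ with $T_k$ as in~(\ref{eqn:T}) and with $V_k$ obtained from the columns of $Z^*=[-2\aa_k(-A^*+\alpha_k I)^{-1}V_{k-1},\,-2\aa_k(-A^*+\alpha_k I)^{-1}C^*]$ via the nonsingular matrix $Q_k$. So the entire content of the proposition is the \emph{explicit} identification of $V_k$ as the nested rational-Krylov basis written in the statement, equivalently the explicit identification of $V_k$ with the scalar basis~(\ref{eqn:nestedV}) (read with $\lambda\mapsto -A^*$, $C^*$ appended). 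The base case $k=1$ is immediate: $X_0=0$ means $V_0$ is empty, and the recursion gives $V_1=-2\aa_1(-A^*+\alpha_1 I)^{-1}C^*$, matching both the displayed formula and~(\ref{eqn:nestedV}).

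For the inductive step I would work entirely with the scalar rational functions, since the only algebra needed is polynomial/rational identities in $\lambda$ that carry over verbatim to the matrix setting ($-A^*+\alpha_k I$ is assumed nonsingular, which is all that is required). Assume $V_{k-1}$ is given by~(\ref{eqn:nestedV}). Its $j$-th column is $\dfrac{-2\aa_j}{\lambda+\alpha_j}\prod_{i=1}^{j-1}\varphi_i(\lambda)$. Multiplying by $\dfrac{-2\aa_k}{\lambda+\alpha_k}$ produces the columns of the first block of $Z^*$; the last column of $Z^*$ is $\dfrac{-2\aa_k}{\lambda+\alpha_k}$. I now need to exhibit a nonsingular $Q_k$ turning these $k$ columns into the $k$ columns of $V_k$. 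The key identity is the displayed relation
\begin{equation*}
\frac{\alpha_i-\alpha_j}{(\lambda+\alpha_i)(\lambda+\alpha_j)}+\frac{1}{\lambda+\alpha_i}=\frac{1}{\lambda+\alpha_j},
\end{equation*}
applied with $\alpha_i=\alpha_k$: it lets me rewrite $\dfrac{1}{(\lambda+\alpha_k)(\lambda+\alpha_j)}$ as a combination of $\dfrac{1}{\lambda+\alpha_k}$ and $\dfrac{1}{\lambda+\alpha_j}$, which telescopes when applied repeatedly and is exactly what converts the ``$V_{k-1}$ scaled by $1/(\lambda+\alpha_k)$'' columns into columns of the form $\dfrac{-2\aa_j}{\lambda+\alpha_j}\prod_{i=1}^{j-1}\varphi_i(\lambda)$ for $j<k$, while the new last column $\dfrac{-2\aa_k}{\lambda+\alpha_k}\prod_{i=1}^{k-1}\varphi_i(\lambda)$ is produced by recognizing $\varphi_i(\lambda)=1-(\lambda+\bar\alpha_i)/(\lambda+\alpha_i)$ — no, more simply $\prod_{i=1}^{k-1}\varphi_i$ is built up by successively combining $\dfrac{1}{\lambda+\alpha_k}$ with the existing columns. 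In either phrasing, the change-of-basis matrix that accomplishes this is upper (or lower) triangular with nonzero diagonal entries (the $-2\aa_k$ factors and the $\alpha_k-\alpha_j$ factors cancel cleanly), hence invertible, and it is precisely the $Q_k$ of Lemma~\ref{lemma:Xk-1}. Then~(\ref{eqn:T}) delivers the corresponding $T_k$, and $X_k=V_kT_k^{-1}V_k^*$ by Lemma~\ref{lemma:Xk-1}.

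The main obstacle is purely bookkeeping: writing down $Q_k$ (equivalently $P_k$, hence $T_k$) explicitly and checking its nonsingularity, since one must keep track of how each old column $\dfrac{1}{\lambda+\alpha_j}\prod_{i<j}\varphi_i$ contributes to several new columns through the partial-fraction identity above. I expect this to reduce, after grouping terms, to a triangular system whose invertibility is transparent, so the difficulty is notational rather than conceptual; the paper itself signals this by omitting ``explicit details.'' One should also note, for cleanliness, that the identity $\varphi_i(\lambda)\cdot\dfrac{1}{\lambda+\alpha_k}$ can be re-expanded via the same relation to keep everything in the span of $\{1/(\lambda+\alpha_1),\dots,1/(\lambda+\alpha_k)\}$, which is what makes $\Range(Z^*)=\Range(V_k)$ (already used in Lemma~\ref{lemma:Xk-1}) and the triangularity of $Q_k$ simultaneously visible. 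This completes the induction and hence the proposition.
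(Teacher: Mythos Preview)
Your approach is essentially the same as the paper's: induction via Lemma~\ref{lemma:Xk-1}, working with the scalar rational-function basis and using the partial-fraction identity to build the change-of-basis matrix $Q_k$. Two small corrections: first, the paper does \emph{not} omit the details here (you are thinking of Proposition~\ref{prop:lowrank}); it writes $Q_k$ explicitly as a product $\widehat Q\,\mathbf Q\,\underline Q$ of a cyclic permutation, an upper-triangular all-ones matrix, and a lower bidiagonal matrix, so your expectation that $Q_k$ itself is triangular is not quite right---the permutation is needed because the new column $-2\aa_k(-A^*+\alpha_kI)^{-1}C^*$ sits in the \emph{last} slot of $\widehat U_k$ but its transformed version must feed into \emph{every} column of $V_k$. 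Second, the identities you sketch are the right ones, but the clean factorization in the paper (reorder, then telescope $1$'s, then a bidiagonal rescaling) is worth carrying out rather than asserting, since it is what makes the nonsingularity of $Q_k$ and the explicit form of $P_k$ used in Algorithm~\ref{algo:CFADI} transparent.
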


\begin{proof}
We prove the assertion for $C^*$ having one column. 
For more columns, the same result can be written by
expanding the matrices $Q_k$ and $P_k$ defined below
using Kronecker products (see Algorithm \ref{algo:CFADI}).
Let
\begin{equation*}
\begin{aligned}
V_k:=\begin{bmatrix}\frac{-2\aa_1}{\lambda+\alpha_1 },&\frac{-2\aa_2}{\lambda+\alpha_2 }
\frac{\lambda-\bar\alpha_1}{\lambda+\alpha_1},&\ldots 
& \frac{-2\aa_k}{\lambda+\alpha_k }\displaystyle%
\prod_{i=1}^{k-1}\frac{\lambda-\bar\alpha_i}{\lambda+\alpha_i}
\end{bmatrix} , 
\end{aligned}
\end{equation*}
and
\begin{equation}\label{eqn:UU}
\begin{aligned}
\widetilde{U}_k=\begin{bmatrix}(1+\frac{-2\aa_k}{\lambda+\alpha_k})V_{k-1},&\frac{-2\aa_k}{\lambda+\alpha_k }\end{bmatrix} , \quad
\widehat{U}_k:=\begin{bmatrix}\frac{-2\aa_k}{\lambda+\alpha_k}V_{k-1},&\frac{-2\aa_k}{\lambda+\alpha_k }\end{bmatrix} .
\end{aligned}
\end{equation}
We need  to find $Q_k$ such that $\widehat{U}_kQ_k= V_k$. 
Then $\widetilde{U}_k=V_kP_k$, where $P_k=Q_k^{-1}+{\rm blkdiag}(I,0).$
Let
$\widehat{Q}=\begin{bmatrix}
&I\\
1&\\
\end{bmatrix},$ then

\begin{equation*}
\begin{aligned}
\mathbf{U}_{k} 
:=\widehat{U}_k\widehat{Q} 
=\begin{bmatrix}\frac{-2\aa_k}{\lambda+\alpha_k },&\frac{-2\aa_k}{\lambda+\alpha_k}\frac{-2\aa_1}{\lambda+\alpha_1 }, 
&\frac{-2\aa_k}{\lambda+\alpha_k}\frac{-2\aa_2}{\lambda+\alpha_2 }\frac{\lambda-\bar\alpha_1}{\lambda+\alpha_1},
\ldots, \,
\frac{-2\aa_k}{\lambda+\alpha_k}\frac{-2\aa_{k-1}}{\lambda+\alpha_{k-1} }\displaystyle%
\prod_{i=1}^{k-2}\frac{\lambda-\bar\alpha_i}{\lambda+\alpha_i} 
\end{bmatrix} .\\
\end{aligned}
\end{equation*}
Since for any $s=1,2,\ldots,k-1$,
$
\begin{bmatrix}
\frac{-2\aa_s}{\lambda+\alpha_s }
&1
\end{bmatrix}
\begin{bmatrix}
1&1\\
&1
\end{bmatrix}
=\begin{bmatrix}
\frac{\lambda-\bar\alpha_s}{\lambda+\alpha_s }
&1
\end{bmatrix}$ , 
it holds that $\mathbf{U}_{k}\mathbf{Q}=\underline{U}_k$, where
\begin{equation*}
\begin{aligned}
\mathbf{Q}&:=
\begin{bmatrix}
1&1&\\
&1&\\
&&I
\end{bmatrix}
\begin{bmatrix}
1&&&\\
&1&1&\\
&&1&\\
&&&I\\
\end{bmatrix}
\ldots
\begin{bmatrix}
I&&&\\
&1&1&\\
&&1&\\
&&&1\\
\end{bmatrix}
\begin{bmatrix}
I&&\\
&1&1\\
&&1\\
\end{bmatrix},\quad {\rm and}\\
\underline{U}_k&:=
\begin{bmatrix}
\frac{-2\aa_k}{\lambda+\alpha_k },
&\frac{-2\aa_k}{\lambda+\alpha_k}\frac{\lambda-\bar\alpha_1}{\lambda+\alpha_1 },
&\frac{-2\aa_k}{\lambda+\alpha_k}\frac{\lambda-\bar\alpha_2}{\lambda+\alpha_2 }\frac{\lambda-\bar\alpha_1}{\lambda+\alpha_1},
&\ldots,
&\frac{-2\aa_k}{\lambda+\alpha_k}\displaystyle
\prod_{i=1}^{k-1}\frac{\lambda-\bar\alpha_i}{\lambda+\alpha_i}
\end{bmatrix}.
\end{aligned}
\end{equation*}
Since for any $s=1,2,\ldots,k-1$,
\begin{equation*}
\begin{aligned}
\begin{bmatrix}
\frac{-2\aa_k}{\lambda+\alpha_k }&\frac{-2\aa_k}{\lambda+\alpha_k }\frac{\lambda-\bar\alpha_s}{\lambda+\alpha_s }
\end{bmatrix}
\begin{bmatrix}
\frac{\bar\alpha_s+\alpha_k}{2\aa_k}&\\
\frac{\alpha_k-\alpha_s}{-2\aa_k}&1
\end{bmatrix}
=\begin{bmatrix}
\frac{-2\aa_s}{\lambda+\alpha_s }&\frac{-2\aa_k}{\lambda+\alpha_k }\frac{\lambda-\bar\alpha_s}{\lambda+\alpha_s }
\end{bmatrix} , 
\end{aligned}
\end{equation*}
it holds that $\underline{U}_k\underline{Q}=V_k$ where

\begin{equation*}
\begin{aligned}
\underline{Q}&:=
\begin{bmatrix}
\frac{\bar\alpha_{1}+\alpha_k}{2\aa_k}&&&&&\\
\frac{\alpha_k-\alpha_{1}}{-2\aa_k}&\frac{\bar\alpha_{2}+\alpha_k}{2\aa_k}&&&&\\
&\frac{\alpha_k-\alpha_{2}}{-2\aa_k}&&\ddots&&\\
&&&\ddots&\frac{\bar\alpha_{k-1}+\alpha_{k}}{2\aa_k}&\\
&&&&\frac{\alpha_{k-1}-\alpha_{k}}{-2\aa_k}&1\\
\end{bmatrix}.\\
\end{aligned}
\end{equation*}

This implies that we can  determine $Q_k$ and $P_k$ such that
 $\widehat{U}_kQ_k= V_k$ and $\widetilde{U}_k=V_kP_k$, that is
\begin{eqnarray*}
Q_k&=&\widehat{Q}\mathbf{Q}\underline{Q} 
=\begin{bmatrix}
&I\\
1&\\
\end{bmatrix}
\begin{bmatrix}
1&\cdots&\cdots&1\\
&\ddots&\cdots&\vdots\\
&&\ddots&\vdots\\
&&&1\\
\end{bmatrix}
\begin{bmatrix}
\frac{\bar\alpha_{1}+\alpha_k}{2\aa_k}&&&&&\\
\frac{\alpha_1-\alpha_{k}}{2\aa_k}&\frac{\bar\alpha_{2}+\alpha_k}{2\aa_k}&&&&\\
&\frac{\alpha_2-\alpha_{k}}{2\aa_k}&&\ddots&&\\
&&&\ddots&\frac{\bar\alpha_{k-1}+\alpha_{k}}{2\aa_k}&\\
&&&&\frac{\alpha_{k-1}-\alpha_{k}}{2\aa_k}&\frac{\bar\alpha_{k}+\alpha_{k}}{2\aa_k}\\
\end{bmatrix}\\
P_k&=&Q_k^{-1}+
\begin{bmatrix}
I&\\
&0\\
\end{bmatrix}.
\end{eqnarray*}

\end{proof}

We summarize the resulting method in Algorithm \ref{algo:CFADI}.
We remark that this implementation generates a CF-ADI-like basis \cite[Algorithm 2]{Li2002} (the
algorithm will be the same for $F=0$); More precisely, the CF-ADI algorithm uses 
\begin{equation}
\begin{aligned}
&\widetilde{V}_k:=\begin{bmatrix}\frac{\sqrt{2\rea_1}}{\lambda+\alpha_1 },&\frac{\sqrt{2\rea_2}}{\lambda+\alpha_2 }\frac{\lambda-\overline{\alpha_1}}{\lambda+\alpha_1},&\ldots &\frac{\sqrt{2\rea_{k-1}}}{\lambda+\alpha_{k-1} }
\displaystyle\prod_{i=1}^{k-2}\frac{\lambda-\overline{\alpha_i}}{\lambda+\alpha_i},&
\frac{\sqrt{2\rea_{k}}}{\lambda+\alpha_k }
\displaystyle\prod_{i=1}^{k-1}\frac{\lambda-\overline{\alpha_i}}{\lambda+\alpha_i}
\end{bmatrix}\\
&\widetilde{v}_1=\sqrt{2\rea_1}(-A^*+\alpha_1 I)^{-1}C^*\\
&\widetilde{v}_{i+1}=\frac{\sqrt{2\rea_{i+1}}}{\sqrt{2\rea_i}}[I-(\alpha_{i+1}+\overline{\alpha}_i)(-A^*+\alpha_{i+1}I)^{-1}]\widetilde{v}_i
\end{aligned}
\end{equation}
for which a corresponding expression of $X_k$ can be derived.
{The given selection of $v_1, T_1$ makes Algorithm \ref{algo:CFADI} mathematically
equivalent to the recurrence in (\ref{eqn:basic_rec}) with $X_0=0$.}

\begin{algorithm}[tbh]
    \caption{\name: Incremental low rank Subspace Iteration algorithm. 
\label{algo:CFADI}}

    \begin{algorithmic}[1]
  \STATE INPUT $A\in \RR^{n\times n}$,
$C\in\RR^{p\times n}$, $B\in\RR^{n\times q}$,
 $\alpha_k$, $k=1, 2, \ldots$, with $\aa_k=\Re(\alpha_k)$
  \STATE  $v_1:=-2\aa_1(-A^*+\alpha_1 I)^{-1}C^*$,  $V_1:=v_1$,

$T_1:=[2 \aa_1 I+2 \aa_1 C(-A+\bar{\alpha}_1I)^{-1}F(-A^*+\alpha_1 I)^{-1}C^*]$

\FOR{$k=2,3, \ldots $}
    \STATE $v_k:=\frac{\alpha_k}{\alpha_{k-1}}(v_{k-1}-(\alpha_{k-1}+\bar\alpha_{k} )(-A^*+\alpha_k I)^{-1}v_{k-1})$
    \STATE $V_k:=\begin{bmatrix}V_{k-1},&v_k\end{bmatrix}$
    \STATE {\footnotesize$Q_k:=\begin{bmatrix}
            &I\\
            1&\\
            \end{bmatrix}
            \begin{bmatrix}
            1&\cdots&\cdots&1\\
            &\ddots&\cdots&\vdots\\
            &&\ddots&\vdots\\
            &&&1\\
            \end{bmatrix}
            \begin{bmatrix}
            \frac{\bar\alpha_{1}+\alpha_k}{2\aa_k}&&&&&\\
            \frac{\alpha_1-\alpha_{k}}{2\aa_k}&\frac{\bar\alpha_{2}+\alpha_k}{2\aa_k}&&&&\\
            &\frac{\alpha_2-\alpha_{k}}{2\aa_k}&&\ddots&&\\
            &&&\ddots&\frac{\bar\alpha_{k-1}+\alpha_{k}}{2\aa_k}&\\
            &&&&\frac{\alpha_{k-1}-\alpha_{k}}{2\aa_k}&\frac{\bar\alpha_{k}+\alpha_{k}}{2\aa_k}\\
            \end{bmatrix} \otimes I_p $}
    \STATE $P_k:=Q_k^{-1}+
            \begin{bmatrix}
            I&\\
            &0\\
            \end{bmatrix}\otimes I_p$
    \STATE $T_k:=P_k^{-*}\left\{\begin{bmatrix}T_{k-1} &0\\0&2\aa_k I\end{bmatrix}
    +\frac{1}{2\aa_k}Q_k^{-*}V_k^*FV_kQ_k^{-1}\right\}P_k^{-1}$
\ENDFOR

 \STATE OUTPUT: $V_k, T_k$ s.t.  $X_k=V_kT_k^{-1}V_k^*\approx X_+$
    \end{algorithmic}
\end{algorithm}

The algorithm sequentially expands the matrix $V_k$ as the iteration progresses.
If $C^*$ has multiple columns, then the columns of $V_k$ increases correspondingly,
at each iteration. Regardless of the number of columns of $C^*$,
the matrix $V_k$ becomes increasingly ill-conditioned, possibly loosing numerical rank.
Although this fact does not influence the stability of the method,
the whole matrix $V_k$ is required, so that memory requirements expand accordingly. 
However, $V_k$ may be stored as $V_k={\cal V}_k{\cal R}_k$, with ${\cal V}_k$ of (smaller) full
column numerical rank, and the small matrix ${\cal R}_k$ possibly having a larger number of
columns than rows. This way, the much thinner matrix ${\cal V}_k$ can be saved in place
of $V_k$. We do not report the implementation details of this approach, which can
be found in \cite{Lin.PhDthesis.13}, and note that this implementation provides
the same numerical results as the original one, up to the truncation tolerance used.

\begin{remark}
{\rm
Algorithm \ref{algo:CFADI} can be easily generalized to handle 
the following generalized algebraic Riccati equation
\begin{equation*}
G+A^*XE+E^*XA-E^*XFXE=0
\end{equation*}
with $E$ nonsingular.  In particular, from 
$(CE^{-1})^*CE^{-1}+(AE^{-1})^*X+XAE^{-1}-XFX=0$,
it follows that $C^*$ is substituted by $E^{-*}C^*$, and
$(-A^*+\alpha_i I)^{-1}$ by $(-A^*+\alpha_iE^*)^{-1}E^*$.
As a consequence,
only the lines 2 and 4 of Algorithm~\ref{algo:CFADI} require some modifications.
In particular, these two lines are replaced by
\vskip 0.1in
\qquad 2'.  $v_1:=-2\aa_1(-A^*+\alpha_1 E^*)^{-1}C^*$,$V_1:=v_1$,

{
\qquad \quad\, $T_1:=[2\aa_1 I+2\aa_1C(-A+\bar{\alpha}_1 E)^{-1}F(-A^*+\alpha_1 E^*)^{-1}C^*]$
}

\qquad 4'. $v_k:=\frac{\alpha_k}{\alpha_{k-1}}(v_{k-1}-(\alpha_{k-1}+\bar\alpha_{k} )(-A^*+\alpha_k E^*)^{-1}E^*v_{k-1})$
\vskip 0.1in
\noindent
The rest of the algorithm is unchanged. \endproof
}
\end{remark}

\subsection{The shifts selection} \label{sec:param}
Theorem \ref{th:convth} suggests that  if the parameters 
$\{\alpha_i\}$, $i=1,\ldots,k$ are chosen so as to
make the norms of
$\prod_{i=1}^k T_{22(i)}$, $\prod_{i=1}^k T_{11(i)}^{-1}$ small,
then convergence of the subspace iteration will be fast.
  Next proposition gives more insight into the role of the
parameters.

\begin{proposition}\label{prop:chooseshifts}
With the notation of Theorem \ref{th:convth},
assume that the $\alpha_i$'s are such that the matrices
$ T_{22(i)}$, $T_{11(i)}^{-1}$, for all $i=1, \ldots, k$ are well defined.
Then
\begin{equation}
\begin{aligned}
\rho\left(\prod_{i=k}^1 T_{22(i)}\right)=
\rho\left(\prod_{i=1}^k T_{11(i)}^{-1}\right)=
\max_{\lambda \in \lambda_+(\Ham)}
\prod_{i=1}^{k}\left|\frac{\lambda-\overline{\alpha}_i}{\lambda+\alpha_i}\right| .
\end{aligned}
\end{equation}
\end{proposition}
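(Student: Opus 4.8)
The plan is to reduce everything to scalar rational functions of the eigenvalues of $\Ham$. First I would observe that from the definitions in \eqref{eqn:schur}, the triangular factors $T_{11(i)} = (T_{11}+\alpha_i I)^{-1}(T_{11}-\bar\alpha_i I)$ and $T_{22(i)} = (T_{22}+\alpha_i I)^{-1}(T_{22}-\bar\alpha_i I)$ are themselves upper triangular, and they all commute with each other and with $T_{11}$, $T_{22}$ respectively, since each is a rational function of a single fixed matrix (either $T_{11}$ or $T_{22}$). Therefore the product $\prod_{i=1}^k T_{22(i)}$ is upper triangular, and its diagonal entries are obtained by evaluating the scalar rational function $\prod_{i=1}^k \frac{\mu-\bar\alpha_i}{\mu+\alpha_i}$ at the diagonal entries $\mu$ of $T_{22}$, which are exactly the eigenvalues of $T_{22}$, i.e. the eigenvalues of $\Ham$ with positive real part. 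The spectral radius of an upper triangular matrix is the maximum modulus of its diagonal entries, so
\[
\rho\left(\prod_{i=k}^1 T_{22(i)}\right) = \max_{\lambda\in\lambda_+(\Ham)} \prod_{i=1}^k \left|\frac{\lambda-\bar\alpha_i}{\lambda+\alpha_i}\right|.
\]

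Next I would handle $\prod_{i=1}^k T_{11(i)}^{-1}$ the same way: each $T_{11(i)}^{-1} = (T_{11}-\bar\alpha_i I)^{-1}(T_{11}+\alpha_i I)$ is a rational function of $T_{11}$, upper triangular, and the diagonal entries of the product are the values of $\prod_{i=1}^k \frac{\mu+\alpha_i}{\mu-\bar\alpha_i}$ at the eigenvalues $\mu\in\lambda_-(\Ham) = \sigma(T_{11})$. Thus
\[
\rho\left(\prod_{i=1}^k T_{11(i)}^{-1}\right) = \max_{\mu\in\lambda_-(\Ham)} \prod_{i=1}^k \left|\frac{\mu+\alpha_i}{\mu-\bar\alpha_i}\right|.
\]
The remaining point is to identify this last quantity with $\max_{\lambda\in\lambda_+(\Ham)}\prod_i |\frac{\lambda-\bar\alpha_i}{\lambda+\alpha_i}|$. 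This is where the Hamiltonian symmetry of the spectrum enters: $\lambda\in\sigma(\Ham)$ if and only if $-\bar\lambda\in\sigma(\Ham)$, so $\mu\in\lambda_-(\Ham)$ iff $\lambda:=-\bar\mu\in\lambda_+(\Ham)$. Substituting $\mu = -\bar\lambda$, and using that the $\alpha_i$ can be taken conjugated termwise inside a product of moduli (since $|z| = |\bar z|$), one checks that $\left|\frac{-\bar\lambda+\alpha_i}{-\bar\lambda-\bar\alpha_i}\right| = \left|\frac{\lambda+\bar\alpha_i}{\lambda+\alpha_i}\right|$... — more cleanly, $\left|\frac{\mu+\alpha_i}{\mu-\bar\alpha_i}\right| = \left|\frac{\overline{\mu+\alpha_i}}{\overline{\mu-\bar\alpha_i}}\right| = \left|\frac{\bar\mu+\bar\alpha_i}{\bar\mu-\alpha_i}\right| = \left|\frac{-\lambda+\bar\alpha_i}{-\lambda-\alpha_i}\right| = \left|\frac{\lambda-\bar\alpha_i}{\lambda+\alpha_i}\right|$, so the two maxima coincide term by term after the change of variables, and hence the whole products agree.

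Putting the three displayed equalities together yields the claim. I do not expect a serious obstacle here; the only point requiring care is the bookkeeping in the change of variables $\mu\mapsto-\bar\mu$ together with the conjugation of the parameters, to make sure the factor $\frac{\mu+\alpha_i}{\mu-\bar\alpha_i}$ really transforms into $\frac{\lambda-\bar\alpha_i}{\lambda+\alpha_i}$ in modulus, and the observation that the spectral radius of a product of commuting upper triangular matrices is read off the diagonal — which follows because the product is upper triangular with diagonal equal to the entrywise product of the diagonals. One should also note in passing that the hypothesis "$T_{22(i)}$, $T_{11(i)}^{-1}$ well defined" exactly guarantees $-\alpha_i\notin\sigma(T_{22})$ and $\bar\alpha_i\notin\sigma(T_{11})$, so none of the scalar rational functions above has a pole at the relevant eigenvalues, and all the expressions are finite.
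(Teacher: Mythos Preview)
Your proposal is correct and follows essentially the same route as the paper: both arguments compute the spectra of the products by observing that each factor is a rational function of the fixed triangular matrix $T_{11}$ or $T_{22}$ (so the eigenvalues of the product are the corresponding scalar rational functions evaluated at $\sigma(T_{11})$ or $\sigma(T_{22})$), and then invoke the Hamiltonian spectral symmetry $\lambda_+(\Ham)=-\overline{\lambda_-(\Ham)}$ to identify the two maxima. Your write-up is more explicit about the triangularity/commutativity justification and the change-of-variables calculation, but the underlying idea is identical to the paper's.
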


\begin{proof}
From
$T_{22(i)}= (T_{22}+\alpha_i I)^{-1}(T_{22}-\overline{\alpha}_i I)$,
$T_{11(i)}= (T_{11}+\alpha_i I)^{-1}(T_{11}-\overline{\alpha}_i I)$, for
$i=1, \ldots, k$, it follows that
{\small
$$
\sigma\left(\prod_{i=k}^1 T_{22(i)}\right)=
\left\{\prod_{i=1}^{k}\frac{\lambda-\overline{\alpha}_i}{\lambda+\alpha_i}: 
\lambda \in \lambda_+(\Ham)\right\}, \,\,
\sigma\left(\prod_{i=k}^1 T_{11(i)}\right)=
\left\{\prod_{i=1}^{k}\frac{\lambda-\overline{\alpha}_i}{\lambda+\alpha_i}: 
\lambda \in \lambda_-(\Ham)\right\} .
$$
}
From $\lambda_+(\Ham)=-\overline{\lambda_-(\Ham)}$ the result follows.
\end{proof}

Proposition \ref{prop:chooseshifts}, together with the requirement
that all $\alpha_i$ have positive real part, motivate the computation
of the parameters as
\begin{eqnarray}
\{\alpha_1, \ldots, \alpha_k\} &=&
\arg \min_{\alpha_1, \ldots, \alpha_k > 0} 
\max_{\lambda \in \lambda_+(\Ham)}\prod_{i=1}^{k}\left|
\frac{\lambda-\overline{\alpha}_i}{\lambda+\alpha_i}\right| ;
\end{eqnarray} 
{note that here and throughout the paper, we assume that 
the set of parameters is closed under conjugation, that is if $\alpha$ belongs
to the set, also $\bar\alpha$ does. In case of complex data, this constraint
is unnecessary.}

The problem of selecting the parameters is quite similar to the one in ADI 
for the Lyapunov equation (see \cite{Lu1991},\cite{Ellner1991},\cite{Benner.Kuerschner.Saak.12},%
\cite{Penzl2000a}, and the discussion in \cite{Simoncini.survey.13}), 
except that now the maximization
is performed with respect to $\Ham$ instead of $A$.
We implemented a variant of Penzl's algorithm in \cite{Penzl2000a}, which
selects the best $m$ Ritz values of $\Ham$ with positive real part,  among
those obtained in the generated Krylov subspaces with  $\Ham$ and  $\Ham^{-1}$ of
size $m_1$ and $m_2$, respectively. In our simple implementation we did not make any
special effort to preserve the symmetric spectral structure in the computation of the
Ritz values, which should instead be taken into account in case accurate
computation is required.  Our numerical experience indicates that the subspace
iteration strongly depends on the quality of these parameters, and that
different selection strategies than this one may be more effective; see
section~\ref{sec:expes} for further discussion.

\subsection{Computation of the residual norm}\label{sec:res}
Unless the problem size is small, the square residual matrix should not be computed
explicitly. Instead, following similar procedures already used in the
literature (cf., e.g., \cite{MR1742324}), the 
computation of residual {\it norm} can be performed economically, by fully
exploiting the low rank form of the approximate solution.
At iteration $k$, using $X_k=V_kT_k^{-1}V_k^*$ gives

\begin{equation}
\begin{aligned}
&\|A^*X_kE+E^*X_kA-E^*X_kBB^*X_kE+C^*C\|_F\\
&=\left\|\begin{bmatrix}C^*& A^*V_k & E^*V_k\end{bmatrix}
\begin{bmatrix}I&0&0\\ 0&0&T_k^{-1}\\0&T_k^{-1}&T_k^{-1}V_k^*BB^*V_k^{-1}T_k^{-1}\\\end{bmatrix}
\begin{bmatrix}C\\V_k^* A \\ V_k^*E\end{bmatrix} \right\|_F\\
&=\left\|R_k
\begin{bmatrix}I&0&0\\ 0&0&T_k^{-1}\\0&T_k^{-1}&T_k^{-1}V_k^*BB^*V_k^{-1}T_k^{-1}\\\end{bmatrix}
R_k^{*} \right\|_F ,
\end{aligned}
\end{equation}
where $R_k$ is obtained from the
economy-size QR decomposition of $[C^*, A^*V_k,  E^*V_k]$.
Since the basis in $V_k$ is nested, it is possible to {\it update} $R_k$ at
each iteration by means of a Gram-Schmidt type procedure, without
recomputing the decomposition from scratch.

\section{Subspace iteration and Galerkin rational Krylov subspace methods}
In the linear case (i.e., $F=0$), it is known that the ADI method is
tightly connected to the Galerkin 
rational Krylov subspace method (RKSM). More precisely, it was already shown
in \cite{Li2002} that the two approximate solutions stem from the
same type of rational Krylov subspace. 
More recently in \cite{Druskin.Knizhnerman.Simoncini.11} it was proved
that the two methods give exactly the same approximate solution if and
only if
the two spaces use the same shifts, and these shifts coincide
with the mirrored Ritz values of $A$ onto the generated space.

In this section we show that a natural generalization of this
property also holds for our setting, leading to an equivalence
between the subspace iteration and
the Galerkin rational Krylov method applied to the Riccati equation.
We recall here that RKSM determines a solution onto the rational Krylov
subspace by requiring that the residual matrix associated with
the approximate solution $X_k^{(G)}$ be ``orthogonal'' 
to the space; see, e.g.,  \cite{Simoncini.Szyld.Monsalve.13}. More
precisely, setting $R_k^{(G)}:= A^* X_k^{(G)} + X_k^{(G)}A-X_k^{(G)} F X_k^{(G)}+G$,
it holds that $U_k^* R_k^{(G)} U_k = 0$, where the orthonormal columns of $U_k$ span the
rational Krylov subspace. {Writing $X_k^{(G)} = U_k Y_k U_k^*$,
the condition
$0=U_k^* R_k^{(G)} U_k$ correponds to the
reduced equation $U_k^* A^*U_k Y_k + Y_k U_k^* A U_k - Y_k U_k^* F U_k Y_k
+ U_k^* G U_k=0$. This equation admits a unique stabilizable solution $Y_k$ under the
assumptions that $U_k^* A^*U_k$ is stable and $U_k^* F U_k \succeq 0, 
U_k^* G U_k \succeq 0$. Therefore, in this section we assume that $A$ is passive,
that is $(x^*A x )/(x^*x) < 0$ for all $x\ne 0$, so that $U_k^* A^*U_k$ is stable.}

To prove the equivalence, we exploit yet another basis for 
the rational Krylov subspace, namely
\begin{eqnarray}\label{eqn:V_indiv}
V_k = {\range}( [(-A^*+\alpha_1 I)^{-1}C^*, \ldots, (-A^*+\alpha_k I)^{-1}C^*]) ,
\end{eqnarray}
which
appears to be particularly well suited for such a comparison; the same basis was used
to relate ADI and RKSM for the Lyapunov equation
in \cite{Druskin.Knizhnerman.Simoncini.11}. For the basis to be full rank, 
a necessary condition is that all shifts be distinct. In practice, by using the
nested space construction it is readily seen that this condition may be
relaxed by allowing higher negative powers of $(-A^*+\alpha_i I)$ in case $\alpha_i$ is
a multiple shift. {The derivation below could be obtained also for nested
bases \cite{Lin.PhDthesis.13}; we refrain from reporting this approach here
because it is significantly more
cumbersome, without providing better insight.}
We next show how to generate the matrices $Q_k$ and $P_k$ so as to use $V_k$
as reference basis. We assume that $C^*$ has a single column; otherwise,
a Kronecker form as in Algorithm \ref{algo:CFADI} can be used.
%
With the scalar rational function notation we write
\begin{equation}
\begin{aligned}
V_k:=\begin{bmatrix}\frac{1}{\lambda+\alpha_1},&\frac{1}{\lambda+\alpha_2},&\ldots,
&\frac{1}{\lambda+\alpha_{k-1}},\frac{1}{\lambda+\alpha_k}\end{bmatrix} ,\\
\end{aligned}
\end{equation}
together with the definition of $\widetilde{U}_k$ in (\ref{eqn:UU}).
We observe that
\begin{equation*}
\begin{aligned}
\begin{bmatrix}\frac{-2\rea_k}{\lambda+\alpha_k} V_{k-1},&\frac{-2\rea_k}{\lambda+\alpha_k}\end{bmatrix}
=\begin{bmatrix}\frac{-2\rea_k}{\lambda+\alpha_k} \frac{1}{\lambda+\alpha_1},
&\frac{-2\rea_k}{\lambda+\alpha_k}\frac{1}{\lambda+\alpha_2},
&\ldots,
&\frac{-2\rea_k}{\lambda+\alpha_k}\frac{1}{\lambda+\alpha_{k-1}},
&\frac{-2\rea_k}{\lambda+\alpha_k}\end{bmatrix} .
\end{aligned}
\end{equation*}
Moreover,
$$
\begin{bmatrix}\frac{1}{\lambda+\alpha_k}\frac{1}{\lambda+\alpha_s},&\frac{1}{\lambda+\alpha_k}\end{bmatrix}
\begin{bmatrix}\alpha_k-\alpha_s&\\1&1\end{bmatrix}
=\begin{bmatrix}\frac{1}{\lambda+\alpha_s},&\frac{1}{\lambda+\alpha_k}\end{bmatrix}
\,\, \mbox{for}\,\, s=1,\ldots,k-1 ,
$$
and
$$
\begin{bmatrix}\frac{1}{\lambda+\alpha_k}\frac{1}{\lambda+\alpha_s},&\frac{1}{\lambda+\alpha_k}\end{bmatrix}
=\begin{bmatrix}\frac{1}{\lambda+\alpha_s},&\frac{1}{\lambda+\alpha_k}\end{bmatrix}\begin{bmatrix}\frac{1}{\alpha_k-\alpha_s}&\\\frac{1}{\alpha_s-\alpha_k}&1\end{bmatrix}
\,\, \mbox{for}\,\, s=1,\ldots,k-1 .
$$
Therefore, with
\begin{eqnarray}\label{eqn:Qk}
Q_k=
\frac{1}{-2\rea_k}
\begin{bmatrix}
\alpha_k-\alpha_1&&&&\\
&\alpha_k-\alpha_2&&&\\
&&&\alpha_k-\alpha_{k-1}&\\
1&1&\ldots&1&1
\end{bmatrix} ,
\end{eqnarray}
we obtain
\begin{eqnarray}\label{eqn:Pk}
P_k&=&
Q_k^{-1}
+\begin{bmatrix}I&\\&0\end{bmatrix}
=\begin{bmatrix}
\frac{\alpha_1+\bar{\alpha}_k}{\alpha_1-\alpha_k}&&&&\\
&\frac{\alpha_2+\bar{\alpha}_k}{\alpha_2-\alpha_k}&&&\\
&&&\frac{\alpha_{k-1}+\bar{\alpha}_k}{\alpha_{k-1}-\alpha_{k}}&\\
\frac{2\rea_k}{\alpha_k-\alpha_1}&\frac{2\rea_k}{\alpha_k-\alpha_2}&\ldots&\frac{2\rea_k}{\alpha_{k}-\alpha_{k-1}}&-2\rea_k \\
\end{bmatrix} ,
\end{eqnarray}
so that
\begin{equation}\label{eqn:Pkinv}
\begin{aligned}
P_k^{-1}&
=\begin{bmatrix}
\frac{\alpha_1-\alpha_k}{\alpha_1+\bar{\alpha}_k}&&&&\\
&\frac{\alpha_2-\alpha_k}{\alpha_2+\bar{\alpha}_k}&&&\\
&&&\frac{\alpha_{k-1}-\alpha_{k}}{\alpha_{k-1}+\bar{\alpha}_k}&\\
\frac{-1}{\alpha_1+\bar{\alpha}_k}&\frac{-1}{\alpha_2+\bar{\alpha}_k}&\ldots&\frac{-1}{\alpha_{k-1}+\bar{\alpha}_k}&\frac{-1}{2\rea_k} \\ 
\end{bmatrix} .
\end{aligned}
\end{equation}

As already mentioned, the approximation $X_k$ can be written in terms of
the new basis and representation matrix as
$X_k = V_k T_k^{-1} V_k^*$ with $V_k$ as in (\ref{eqn:V_indiv}) and
\begin{eqnarray}\label{eqn:Tk_indiv}
\!\! T_{1}=\frac{1}{2\rea_1}+\frac{V_1^*FV_1}{2\rea_1}, \,\,
T_k=P_k^{-*}\left(\begin{bmatrix}T_{k-1}&0\\0&2\rea_k I\end{bmatrix}
    +\frac{1}{2\rea_k}Q_k^{-*}V_k^*FV_kQ_k^{-1}\right)P_k^{-1},
\end{eqnarray}
 where $P_k^{-1}$  and
$Q_k$ are as defined in (\ref{eqn:Pkinv}) and (\ref{eqn:Qk}), respectively.
Here we focus on the use of this formulation for demonstrating the connection
of our approach with RKSM.
We first show that the reduced matrix $T_k$ satisfies a linear matrix equation.
\begin{proposition}\label{prop:Teqn}
Let $V_k$ and $T_k$ be as in (\ref{eqn:V_indiv}) and (\ref{eqn:Tk_indiv}), respectively,
 and $\ba_k={\rm diag}(\alpha_1, \ldots, \alpha_k)$.
Let $\bone=[1, \ldots, 1]^*$.
Then
\begin{equation}\label{eqn:neweqn}
\ba_k^*T_k+T_k \ba_k-V_k^*FV_k-\bone\bone^*=0 .
\end{equation}
\end{proposition}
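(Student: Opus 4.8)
The plan is to establish (\ref{eqn:neweqn}) by induction on $k$, mirroring the recursive definition of $T_k$ in (\ref{eqn:Tk_indiv}). First I would check the base case $k=1$: with $\ba_1=\alpha_1$ and $\bone=1$, the claimed identity reads $\bar\alpha_1 T_1+T_1\alpha_1-V_1^*FV_1-1=0$, i.e. $2\rea_1 T_1 = V_1^*FV_1+1$, which is exactly the definition $T_1=\frac{1}{2\rea_1}+\frac{V_1^*FV_1}{2\rea_1}$. For the inductive step, I would assume $\ba_{k-1}^*T_{k-1}+T_{k-1}\ba_{k-1}-V_{k-1}^*FV_{k-1}-\bone\bone^*=0$ and insert the recursion $T_k=P_k^{-*}(\bH_k+\frac{1}{2\rea_k}Q_k^{-*}V_k^*FV_kQ_k^{-1})P_k^{-1}$, where $\bH_k:={\rm blkdiag}(T_{k-1},2\rea_k I)$.

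The heart of the argument is an algebraic identity relating $P_k$, $Q_k$, and $\ba_k$. Since $\ba_k^* T_k + T_k\ba_k = P_k^{-*}\big[(P_k^*\ba_k^* P_k^{-*})(\bH_k+\tfrac{1}{2\rea_k}Q_k^{-*}V_k^*FV_kQ_k^{-1}) + (\bH_k+\tfrac{1}{2\rea_k}Q_k^{-*}V_k^*FV_kQ_k^{-1})(P_k^{-1}\ba_k P_k)\big]P_k^{-1}$, I need the conjugation $P_k^{-1}\ba_k P_k$ (equivalently how $\ba_k$ acts through $P_k$) in a usable form. Using the explicit forms (\ref{eqn:Qk})--(\ref{eqn:Pkinv}) and the partitioned block structure $\ba_k={\rm blkdiag}(\ba_{k-1},\alpha_k)$, I would verify two key relations: (i) a ``commutation''-type identity of the form $\ba_k^* P_k^* + P_k^*\,\widehat\ba = (\text{something involving }\bone)$ that, together with the inductive hypothesis applied to the $\bH_k$ block, produces $-V_{k-1}^*FV_{k-1}$ (which must be reconciled with the $Q_k$-sandwiched term $V_k^*FV_k$), and (ii) the fact that $V_k^*FV_k$ appears consistently on both sides after pulling $Q_k^{-1}$ through: note $Q_k^{-1}P_k = I + Q_k^{-1}{\rm blkdiag}(I,0)$ by definition of $P_k$, which is the lever for matching the $F$-terms. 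I would also need to handle the rank-one term $\bone\bone^*$, which should emerge from the bottom row/column structure of $P_k$ and $Q_k$ (the rows of all-ones in (\ref{eqn:Qk}) and the $-2\rea_k$ entries in (\ref{eqn:Pk})) combined with the scalar $2\rea_k I$ block of $\bH_k$; this is precisely the ADI-to-Sylvester bookkeeping and the identity $\frac{\alpha_i-\alpha_j}{(\lambda+\alpha_i)(\lambda+\alpha_j)}+\frac{1}{\lambda+\alpha_i}=\frac{1}{\lambda+\alpha_j}$ recorded earlier is the scalar shadow of these manipulations.

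Alternatively — and this may be cleaner — I would derive (\ref{eqn:neweqn}) directly from the residual characterization rather than by induction: since $X_k=V_kT_k^{-1}V_k^*$ spans the rational Krylov subspace and, by Proposition~\ref{prop:lowrank} and the equivalence noted after Algorithm~\ref{algo:CFADI}, coincides with the subspace iteration iterate, one can compute $V_k^*(A^*X_k+X_kA-X_kFX_k+G)V_k$ and use the defining pole relations $(-A^*+\alpha_iI)v_i = C^*$ (columnwise, where $V_k=[v_1,\dots,v_k]$) to get $A^*V_k = V_k\ba_k^* - C^*\bone^*$ and $V_k^*A = \ba_k V_k^* - \bone C V_k^*$ (using $G=C^*C$, $V_k^*C^* $-type terms collapsing). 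Substituting these into $V_k^*A^*X_k + \dots$ and using $X_k V_k = V_k T_k^{-1}V_k^* V_k$ would, after the Galerkin-type cancellations, reduce to $\ba_k^*T_k + T_k\ba_k - V_k^*FV_k - \bone\bone^* = -$ (a residual term that vanishes because $X_k$ has the factored form and $V_k$ is a basis). The main obstacle in either route is the same: carefully tracking the non-orthogonal change of basis — the matrices $P_k$, $Q_k$ are triangular-plus-rank-one and do not commute with $\ba_k$ — so the bulk of the work is verifying the handful of explicit matrix identities among $P_k$, $Q_k$, $\ba_k$, and $\bone$; once those are in hand the equation (\ref{eqn:neweqn}) falls out, and I would relegate the elementary but lengthy entrywise verifications to a remark or omit them as is done elsewhere in the paper.
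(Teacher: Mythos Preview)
Your inductive approach is in the right spirit, but the paper executes it far more cleanly by aiming at a \emph{closed-form entrywise} statement rather than the matrix equation itself. Specifically, the paper proves by induction on $k$ that $T_k(i,j)=\dfrac{1+(V_k^*FV_k)_{ij}}{\bar\alpha_i+\alpha_j}$ for all $i,j\le k$, which is trivially equivalent to (\ref{eqn:neweqn}). This sidesteps entirely the conjugations $P_k^{-1}\ba_kP_k$ and the ``commutation-type identities'' you anticipate struggling with: once you target the scalar formula, the inductive step is a two-line computation using only the explicit columns of $P_k^{-1}$ from (\ref{eqn:Pkinv}) and the single observation that $Q_k^{-1}P_k^{-1}={\rm diag}\bigl(\tfrac{2\rea_k}{\alpha_1+\bar\alpha_k},\ldots,\tfrac{2\rea_k}{\alpha_{k-1}+\bar\alpha_k},1\bigr)$ is diagonal. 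That diagonality is the lever you were groping for with ``$Q_k^{-1}P_k=I+Q_k^{-1}{\rm blkdiag}(I,0)$''; the paper's formulation makes the $F$-term trivial to handle entrywise.

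Your alternative route via the residual has a genuine gap. You propose deriving (\ref{eqn:neweqn}) by computing $V_k^*R_kV_k$ and arguing a residual term vanishes ``because $X_k$ has the factored form and $V_k$ is a basis.'' But $X_k$ from the subspace iteration is \emph{not} in general the Galerkin solution on $\Range(V_k)$ --- indeed Theorem~\ref{th:ritz}, proved \emph{after} and \emph{using} (\ref{eqn:neweqn}), shows that $V_k^*R_kV_k=0$ only under the special condition $g_k=T_k^{-1}\bone$. So the logic is inverted: (\ref{eqn:neweqn}) is what makes the residual analysis work, not a consequence of it. The relation $A^*V_k=V_k\ba_k - C^*\bone^*$ you wrote down (modulo a sign/conjugate) is correct and is used later in Proposition~\ref{prop:rankoneres}, but it does not by itself yield (\ref{eqn:neweqn}).
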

\begin{proof}
With $P_k^{-1}$ in (\ref{eqn:Pkinv}) and
$Q_k^{-1}$ expressed via (\ref{eqn:Pk}), we first observe that
%
%
%
\begin{equation*}
Q_k^{-1}P_k^{-1}=\left(P_k-\begin{bmatrix}I&\\&0\end{bmatrix}\right)P_k^{-1}
=
{\rm diag}\left(\frac{2\rea_k}{\alpha_1+\bar{\alpha}_k}, \ldots, \frac{2\rea_k}{\alpha_{k-1}+\bar{\alpha}_k},1\right) .
\end{equation*}
We are going to prove  that
$T_{k}(i,j)=\frac{1+\widetilde{F}_{ij}}{\bar{\alpha_i}+\alpha_j}$, for $i,j \leq k$ by induction on
$k$, where $\widetilde{F}_{ij}=V_k^*FV_k$.
For $k=1$ it can be easily verified that
$T_{1}=\frac{1}{2\rea_1}+\frac{V_1^*FV_1}{2\rea_1}$.
Then assume that the  relation holds for $T_{k-1}$. 
Noticing the structure of $P_k^{-1}$ and $Q_k^{-1}P_k^{-1}$, for $i<k$, $j<k$ we have
\begin{equation*}
\begin{aligned}
T_k(i,j)&=e_i^*P_k^{-*}\begin{bmatrix}T_{k-1}&0\\0&2\rea_k \end{bmatrix}P_k^{-1}e_j
    +\frac{1}{2\rea_k}e_i^*P_k^{-*}Q_k^{-*}V_k^*FV_kQ_k^{-1}P_k^{-1}e_j\\
&=\begin{bmatrix}\frac{\bar{\alpha}_i-\bar{\alpha}_k}{\bar{\alpha}_i+\alpha_k}& \frac{-1}{\bar{\alpha}_i+\alpha_k}\end{bmatrix}
\begin{bmatrix}\frac{1+\widetilde{F}_{ij}}{\bar{\alpha}_i+\alpha_j}&\\& 2\rea_k \end{bmatrix}
\begin{bmatrix}\frac{{\alpha}_j-{\alpha}_k}{{\alpha}_j+\bar{\alpha}_k}\\ \frac{-1}{{\alpha}_j+\bar{\alpha}_k}\end{bmatrix}
+\frac{1}{2\rea_k}\frac{2\rea_k}{\bar{\alpha}_i+\alpha_k}\widetilde{F}_{ij}\frac{2\rea_k}{{\alpha}_j+\bar{\alpha}_k}\\
&=\frac{\bar{\alpha}_i-\bar{\alpha}_k}{\bar{\alpha}_i+\alpha_k}\frac{1+\widetilde{F}_{ij}}{\bar{\alpha}_i+\alpha_j}\frac{{\alpha}_j-{\alpha}_k}{{\alpha}_j+\bar{\alpha}_k}
+\frac{2\rea_k}{(\bar{\alpha}_i+\alpha_k)({\alpha}_j+\bar{\alpha}_k)}
+\frac{2\rea_k\widetilde{F}_{ij}}{(\bar{\alpha}_i+\alpha_k)({\alpha}_j+\bar{\alpha}_k)}\\
&=\frac{1+\widetilde{F}_{ij}}{\bar{\alpha}_i+\alpha_j} .
\end{aligned}
\end{equation*}
For $j=k$ and $i\leq k$ we obtain
\begin{equation*}
\begin{aligned}
T_k(i,k)&=e_i^*T_ke_k=e_i^*P_k^{-*}\begin{bmatrix}T_{k-1}&0\\0&2\rea_k \end{bmatrix}P_k^{-1}e_k
+\frac{1}{2\rea_k}e_i^*P_k^{-*}Q_k^{-*}V_k^*FV_kQ_k^{-1}P_k^{-1}e_k\\
&=e_i^*P_k^{-*}(-e_k)+\frac{1}{2\rea_k}\frac{2\rea_k \widetilde{F}_{ik}}{\alpha_i+\bar{\alpha}_k}
=\frac{1+\widetilde{F}_{ik}}{\bar{\alpha}_i+\alpha_k} .
\end{aligned}
\end{equation*}

The structure of $T_k(k,j)$, $j\le k$ is obtained by symmetry, and the proof is completed.
\end{proof}

We notice that Proposition~\ref{prop:Teqn} also
shows that the principal $(k-1)\times (k-1)$ diagonal
block  of $T_k$ coincides with $T_{k-1}$. 
As an immediate consequence of this fact,
we show that the approximate solution $X_k$ can be updated from $X_{k-1}$
with a rank-one matrix (a rank-$p$ matrix if $C^*$ has $p$ columns); 
this is similar to what one finds with CF-ADI. 
In addition, the approximation sequence is weakly monotonically increasing.

\begin{theorem}
For $k>0$, the approximate solution $X_k$ is such that $X_k - X_{k-1}$ has rank one.
Moreover, for all $k>0$, $X_k \succeq X_{k-1}$.
\end{theorem}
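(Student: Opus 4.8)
The plan is to exploit the two structural facts established just before the statement: that $V_k = [V_{k-1}, v_k]$ with $v_k = (-A^*+\alpha_k I)^{-1}C^*$ (up to scaling, by \eqref{eqn:V_indiv}), and that the leading $(k-1)\times(k-1)$ block of $T_k$ equals $T_{k-1}$ (the remark following Proposition~\ref{prop:Teqn}). Write $T_k$ in block form as $T_k=\left[\begin{smallmatrix}T_{k-1}& t_k\\ t_k^* & \tau_k\end{smallmatrix}\right]$. The goal is to compute $X_k - X_{k-1} = V_k T_k^{-1} V_k^* - V_{k-1}T_{k-1}^{-1}V_{k-1}^*$ and show it has rank one and is positive semidefinite. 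First I would apply the block inverse (Schur complement) formula to $T_k^{-1}$, writing it in terms of $T_{k-1}^{-1}$, $t_k$, and the scalar Schur complement $s_k := \tau_k - t_k^* T_{k-1}^{-1} t_k$. Substituting into $V_k T_k^{-1}V_k^*$ and using $V_k = [V_{k-1}, v_k]$, the "$V_{k-1}T_{k-1}^{-1}V_{k-1}^*$ part" cancels exactly against $X_{k-1}$, and what remains is $\frac{1}{s_k}\, w_k w_k^*$ where $w_k := v_k - V_{k-1}T_{k-1}^{-1}t_k$. This is manifestly rank one.

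For the monotonicity part, once $X_k - X_{k-1} = \frac{1}{s_k} w_k w_k^*$, it suffices to show $s_k > 0$, i.e.\ that the Schur complement of $T_{k-1}$ in $T_k$ is positive. The natural route is to invoke that $T_k \succ 0$: under the running hypotheses $F \succeq 0$ (equivalently $F = BB^*$) and $G = C^*C \succeq 0$, Corollary~\ref{cor:Xksspd} and the surrounding results give $X_k \succeq 0$ with $X_k = V_k T_k^{-1}V_k^*$ and $T_k$ Hermitian nonsingular; more directly, the explicit recursion \eqref{eqn:Tk_indiv} together with $F\succeq 0$ forces $T_k \succ 0$ by the same argument as in the proof of Corollary~\ref{cor:Xksspd} (each step adds $2\rea_k$ on a diagonal block plus a positive semidefinite congruence term, starting from $T_1 = \frac{1}{2\rea_1}(1 + V_1^*FV_1) > 0$). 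Since any principal Schur complement of a positive definite matrix is positive, $s_k > 0$, hence $X_k - X_{k-1} = \frac{1}{s_k}w_k w_k^* \succeq 0$.

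I would then assemble these: the block-inverse computation gives the rank-one update formula, and positivity of the Schur complement gives $X_k \succeq X_{k-1}$; both hold for every $k>0$, with the base case $k=1$ reading $X_1 = V_1 T_1^{-1}V_1^* \succeq 0 = X_0$ when $X_0 = 0$ (or more generally $X_1 - X_0$ rank $p$ if $C^*$ has $p$ columns). The main obstacle I anticipate is purely bookkeeping: making the cancellation $V_k T_k^{-1}V_k^* - X_{k-1} = \frac{1}{s_k}w_k w_k^*$ transparent requires carefully tracking the off-diagonal block $t_k$ through the block-inverse formula, and one must be slightly careful that the "leading block equals $T_{k-1}$" statement is used with the \emph{same} basis ordering $V_k=[V_{k-1},v_k]$ in which \eqref{eqn:Tk_indiv} is written; there is no conceptual difficulty, only the risk of an index or sign slip. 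A secondary point worth a sentence is the $p$-column case, where "rank one" becomes "rank $\le p$" and $w_k$ is replaced by an $n\times p$ block $W_k$ with $X_k - X_{k-1} = W_k S_k^{-1} W_k^*$, $S_k$ the $p\times p$ Schur complement, still positive definite by the same argument.
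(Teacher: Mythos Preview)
Your proposal is correct and follows essentially the same route as the paper: both exploit the nested structure $T_k(1{:}k{-}1,1{:}k{-}1)=T_{k-1}$ and a block-inverse computation to exhibit $X_k-X_{k-1}$ as a rank-one positive semidefinite matrix. The only cosmetic difference is that the paper carries this out via the Cholesky factor of $T_{k-1}$ (so the update appears directly as $V_k\boldsymbol{\ell}\boldsymbol{\ell}^*V_k^*$, with positivity automatic), whereas you use the Schur-complement form and then argue $s_k>0$ from $T_k\succ 0$; the two computations are equivalent.
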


\begin{proof}
Let $X_{k}=V_{k}T_k^{-1}V_{k}^{*}$, with
$V_k$ and $T_k$ as in (\ref{eqn:V_indiv}) and (\ref{eqn:Tk_indiv}), respectively.
Since $T_k(1:k-1,1:k-1)=T_{k-1}$, we have
$X_k-X_{k-1}=V_k\left(T_k^{-1}-\begin{bmatrix}T_{k-1}^{-1}&\\& 0\end{bmatrix}\right)V_k^*$.

We next show that the matrix in parentheses has rank one.
Let $T_{k-1}=LL^*$ be the Cholesky decomposition of $T_{k-1}$. Then
\begin{equation*}
\begin{aligned}
&T_k=\begin{bmatrix}L&0\\l^*&m \end{bmatrix}\begin{bmatrix}L^*&l\\0 &\bar m \end{bmatrix} ,
\quad
T_k^{-1}=\begin{bmatrix}L^{-*}&-L^{-*}l\bar m^{-1}\\0&\bar m^{-1} \end{bmatrix}
\begin{bmatrix}L^{-1}& 0 \\-m^{-1}l^*L^{-1}&m^{-1} \end{bmatrix} .\\
\end{aligned}
\end{equation*}
By explicitly writing down the (1,1) block of $T_k^{-1}$ it follows
\begin{equation*}
\begin{aligned}
T_k^{-1}-\begin{bmatrix}T_{k-1}^{-1}&\\&0 \end{bmatrix}
&=\begin{bmatrix}-L^{-*}l  \bar m^{-1}\\\bar m^{-1} \end{bmatrix}
\begin{bmatrix}-m^{-1}l^* L^{-1}&m^{-1} \end{bmatrix} = : {\pmb \ell} {\pmb \ell}^* ,
\end{aligned}
\end{equation*}
which has rank one, as stated. Finally,
$X_k = X_{k-1} + V_k {\pmb \ell} {\pmb \ell}^* V_k^*$ with
$V_k {\pmb \ell} {\pmb \ell}^* V_k^* \succeq 0$,  thus completing the proof.
\end{proof}

With these results in hand, we are able to show that for $C^*$ having a single column,
the Riccati equation residual associated with $X_k$ is also a rank-one matrix.

\begin{proposition}\label{prop:rankoneres}
Assume $C$ is rank-one. Then the residual matrix
$R_k=C^*C+A^*X_k+X_kA-X_kFX_k$ is also rank-one.
\end{proposition}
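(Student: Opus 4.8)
The plan is to exploit the linear matrix equation for $T_k$ established in Proposition~\ref{prop:Teqn}, together with the Sylvester-type relation that the columns of $V_k$ satisfy by construction, so that the quadratic term $X_kFX_k$ cancels identically and the residual collapses to a single rank-one outer product.

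First I would record that, since the $i$th column of $V_k$ is $v_i=(-A^*+\alpha_i I)^{-1}C^*$, it satisfies $A^*v_i=\alpha_i v_i-C^*$; collecting columns this reads
$$
A^*V_k=V_k\ba_k-C^*\bone^*,
$$
where $\ba_k=\diag(\alpha_1,\ldots,\alpha_k)$ and $\bone=[1,\ldots,1]^*$ as in Proposition~\ref{prop:Teqn}, and by conjugate transposition $V_k^*A=\ba_k^*V_k^*-\bone C$. Using $X_k=V_kT_k^{-1}V_k^*$ with $T_k=T_k^*$ nonsingular, I would substitute these two identities into $A^*X_k+X_kA$ to obtain
$$
A^*X_k+X_kA=V_k\bigl(\ba_kT_k^{-1}+T_k^{-1}\ba_k^*\bigr)V_k^*-C^*\bone^*T_k^{-1}V_k^*-V_kT_k^{-1}\bone C.
$$

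Next I would multiply the identity of Proposition~\ref{prop:Teqn}, namely $\ba_k^*T_k+T_k\ba_k=V_k^*FV_k+\bone\bone^*$, on the left and on the right by $T_k^{-1}$, giving $\ba_kT_k^{-1}+T_k^{-1}\ba_k^*=T_k^{-1}(V_k^*FV_k+\bone\bone^*)T_k^{-1}$. Plugging this in, the first term becomes $V_kT_k^{-1}V_k^*FV_kT_k^{-1}V_k^*+V_kT_k^{-1}\bone\bone^*T_k^{-1}V_k^*=X_kFX_k+V_kT_k^{-1}\bone\bone^*T_k^{-1}V_k^*$, so the nonlinear term cancels in $R_k=C^*C+A^*X_k+X_kA-X_kFX_k$, leaving
$$
R_k=C^*C+V_kT_k^{-1}\bone\bone^*T_k^{-1}V_k^*-C^*\bone^*T_k^{-1}V_k^*-V_kT_k^{-1}\bone C.
$$
Setting $w:=V_kT_k^{-1}\bone$ (an $n$-vector, since $C^*$ has a single column) and using $\bone^*T_k^{-1}V_k^*=w^*$ by Hermitian symmetry of $T_k$, this is exactly $R_k=C^*C-C^*w^*-wC+ww^*=(C^*-w)(C^*-w)^*$, which has rank at most one.

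There is essentially no hard step here: once the two ingredients are in place — the shift relation $A^*V_k=V_k\ba_k-C^*\bone^*$ and the reduced equation of Proposition~\ref{prop:Teqn} — the result is bookkeeping. The only points needing a little care are the Hermitian symmetry and nonsingularity of $T_k$ (already available from the derivation of (\ref{eqn:Tk_indiv})) and the observation that $V_kT_k^{-1}V_k^*FV_kT_k^{-1}V_k^*$ is precisely $X_kFX_k$, which is what produces the exact cancellation of the quadratic term. Running the same computation with a $p$-column $C^*$ (so that $\bone$ is replaced by $\bone\otimes I_p$) shows $R_k=(C^*-W)(C^*-W)^*$ with $W=V_kT_k^{-1}(\bone\otimes I_p)$ of size $n\times p$, i.e.\ the residual has rank at most $p$, in direct analogy with the rank-$p$ ADI/RKSM residual in the linear Lyapunov case.
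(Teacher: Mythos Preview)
Your argument is correct and follows essentially the same route as the paper: both use the shift relation $A^*V_k=V_k\ba_k-C^*\bone^*$ together with the reduced equation of Proposition~\ref{prop:Teqn} to collapse $R_k$ to $(C^*-V_kT_k^{-1}\bone)(C^*-V_kT_k^{-1}\bone)^*$. The only cosmetic difference is that the paper organizes the computation via the block factorization $[C^*,A^*V_k,V_k]=[C^*,V_k]\begin{bmatrix}1&-\bone^*&0\\0&\ba_k&I\end{bmatrix}$ before simplifying, whereas you substitute directly; the ingredients and the final rank-one factor are identical.
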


\begin{proof}
Recalling the notation leading to (\ref{eqn:neweqn}), we can write
$A^* V_k = -C^* \bone^* + V_k \ba_k$, so that
\begin{equation*}
\begin{bmatrix}
C^*,&A^*V_k,&V_k
\end{bmatrix}
=[C^*,V_k]
\begin{bmatrix}
1&-\bone^*&0\\
0&\ba_k&I\\
\end{bmatrix} .
\end{equation*}
Using (\ref{eqn:neweqn}),
\begin{eqnarray}
R_k&=&[C^*,A^*V_k,V_k]
\begin{bmatrix}
1&0&0\\
0&0&T_k^{-1}\\
0&T_{k}^{-1}&-T_k^{-1}V_k^* F V_k T_k^{-1}\\
\end{bmatrix}
[C^*,A^*V_k,V_k]^*
 \nonumber\\
&=&[C^*,V_k]
\begin{bmatrix}
1&-\bone^*T_k^{-1}\\
-T_{k}^{-1}\bone&T_k^{-1}\ba^*+\ba_kT_k^{-1}-T_k^{-1}V_k^* F V_kT_k^{-1}\\
\end{bmatrix}
[C^*,V_k]^* \nonumber\\
&=&[C^*,V_k]
\begin{bmatrix}
1&-\bone^*T_k^{-1}\\
-T_{k}^{-1}\bone&T_{k}^{-1}\bone\bone^*T_{k}^{-1}\\
\end{bmatrix}
[C^*,V_k]^* \nonumber\\
&=&[C^*,V_k]
\begin{bmatrix}
1\\
-T_{k}^{-1}\bone\\
\end{bmatrix}
\begin{bmatrix}
1&
-\bone^*T_{k}^{-1}\\
\end{bmatrix}
[C^*,V_k]^* , \label{eqn:rankoneRes}
\end{eqnarray}
which is a rank-one matrix.
\end{proof}


We can thus state the main result of this section, which gives necessary
and sufficient conditions for the subspace iteration and RKSM for the
Riccati equation to be mathematically equivalent. 
The equivalence follows from the uniqueness of the Galerkin solution onto
the given space, determined by RKSM, {following from the uniqueness of
the stabilizing solution of the reduced problem}.

\begin{theorem}\label{th:ritz}
{Assume $A$ is passive, and}
assume the notation and assumption of Proposition \ref{prop:rankoneres} hold.
Let $K_k:=(V_k^*V_k)^{-1}V_k^*A^*V_k$, $g_k:=(V_k^*V_k)^{-1}V_k^*C^*$. Then
\begin{enumerate}
\item[$(i)$] The subspace iteration provides a Galerkin method on Range($V_k$), namely $V_k^*R_kV_k=0$,
if and only if $g_k=T_k^{-1} \bone$.
\item[$(ii)$]  $V_k^*R_kV_k=0$ if and only if 
$\ba_k^*T_k+T_kK_k-V_k^*FV_k=0$; in particular,
the poles are the mirrored Ritz values of  $A^*-X_k F$, namely 
$$
\alpha_j = - \bar \lambda_j , \,\, j=1, \ldots, k
$$
where $\lambda_j$ are the properly sorted
eigenvalues of {\footnotesize $(V_k^*V_k)^{-1}V_k^*( A^*-V_kT_k^{-1}V_k^*F)V_k$.}
\end{enumerate}
\end{theorem}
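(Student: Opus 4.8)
The plan is to leverage the rank-one residual formula (\ref{eqn:rankoneRes}) together with the linear matrix equation (\ref{eqn:neweqn}) satisfied by $T_k$, and to translate the Galerkin condition $V_k^*R_kV_k=0$ into algebraic identities on $T_k$, $\ba_k$, and $V_k^*FV_k$. For part $(i)$, I would start from (\ref{eqn:rankoneRes}), which writes $R_k=[C^*,V_k]\, w\, w^*\, [C^*,V_k]^*$ with $w=[1;\,-T_k^{-1}\bone]$. Multiplying on the left by $V_k^*$ and on the right by $V_k$ gives $V_k^*R_kV_k=(V_k^*[C^*,V_k]\,w)(w^*[C^*,V_k]^*V_k)$, and $V_k^*[C^*,V_k]\,w = V_k^*C^* - V_k^*V_k T_k^{-1}\bone = (V_k^*V_k)(g_k - T_k^{-1}\bone)$ using the definition $g_k=(V_k^*V_k)^{-1}V_k^*C^*$. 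Since $V_k^*V_k$ is nonsingular, this vector vanishes if and only if $g_k=T_k^{-1}\bone$, and then the whole outer product vanishes; conversely, if $g_k\ne T_k^{-1}\bone$ the product is a nonzero rank-one matrix. This gives the equivalence in $(i)$.

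For part $(ii)$, I would again use the block factorization $[C^*,A^*V_k,V_k]=[C^*,V_k]\begin{bmatrix}1&-\bone^*&0\\0&\ba_k&I\end{bmatrix}$ from the proof of Proposition~\ref{prop:rankoneres}, but now compress $R_k$ by $V_k^*(\cdot)V_k$ rather than fully. Writing $X_k=V_kT_k^{-1}V_k^*$ and expanding $V_k^*R_kV_k = V_k^*C^*CV_k + V_k^*A^*V_kT_k^{-1}V_k^*V_k + V_k^*V_kT_k^{-1}V_k^*AV_k - V_k^*V_kT_k^{-1}V_k^*FV_kT_k^{-1}V_k^*V_k$, I would substitute $V_k^*A^*V_k = (V_k^*V_k)K_k$ (from the definition of $K_k$) and $V_k^*C^* = (V_k^*V_k)g_k$, then factor out $(V_k^*V_k)$ on both sides. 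Using part $(i)$, on the zero set one may replace $g_k$ by $T_k^{-1}\bone$ so that $V_k^*C^*CV_k=(V_k^*V_k)T_k^{-1}\bone\bone^*T_k^{-1}(V_k^*V_k)$; after cancelling the nonsingular factors $(V_k^*V_k)$ and conjugating by $T_k$, the condition $V_k^*R_kV_k=0$ becomes $\bone\bone^* + T_k\ba_k^*\cdots$ — more precisely it reduces to $T_kK_k + \ba_k^*T_k - V_k^*FV_k=0$ once one invokes (\ref{eqn:neweqn}) to rewrite $\bone\bone^* = \ba_k^*T_k+T_k\ba_k-V_k^*FV_k$ and combines terms. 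The mirrored–Ritz–value statement then follows: the reduced Riccati-type relation $T_kK_k+\ba_k^*T_k-V_k^*FV_k=0$ says that $\ba_k^* = -T_k(K_k-T_k^{-1}V_k^*FV_k)T_k^{-1}$ is similar (via $T_k$) to $-(K_k - T_k^{-1}V_k^*FV_k)^*$, i.e. $-\bar\ba_k$ contains exactly the eigenvalues of $(V_k^*V_k)^{-1}V_k^*(A^*-V_kT_k^{-1}V_k^*F)V_k = K_k - (V_k^*V_k)^{-1}V_k^*FV_kT_k^{-1}\cdots$; matching the sorting of the $\alpha_j$ with the sorting of the $\lambda_j$ gives $\alpha_j=-\bar\lambda_j$.

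The main obstacle I anticipate is bookkeeping the non-orthonormality of $V_k$ cleanly: the statement defines $K_k$ and $g_k$ with the oblique factor $(V_k^*V_k)^{-1}$, so every identity carries extra $(V_k^*V_k)$ factors that must be cancelled in the right order, and one must be careful that the reduced "Ritz matrix'' in $(ii)$ is the oblique projection $(V_k^*V_k)^{-1}V_k^*(A^*-X_kF)V_k$ rather than an orthogonal compression. A second, more conceptual point is that the equivalence with RKSM ``as solution methods'' requires invoking uniqueness of the stabilizing solution of the reduced equation (which is why passivity of $A$ and $U_k^*FU_k\succeq0$, $U_k^*GU_k\succeq0$ are assumed): once $V_k^*R_kV_k=0$, the $X_k$ produced by the subspace iteration is \emph{a} Galerkin solution on $\mathrm{Range}(V_k)$, and uniqueness forces it to coincide with the RKSM iterate — but this last step only uses the earlier discussion and needs no new computation. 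The bulk of the work is the linear-algebra reduction of $V_k^*R_kV_k$ using (\ref{eqn:rankoneRes}) and (\ref{eqn:neweqn}), which is routine once the factored forms are in place.
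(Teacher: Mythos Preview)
Your argument for part $(i)$ is correct and is essentially the paper's computation: from the rank-one factorization (\ref{eqn:rankoneRes}) one gets $V_k^*R_kV_k=(V_k^*V_k)(g_k-T_k^{-1}\bone)(g_k-T_k^{-1}\bone)^*(V_k^*V_k)$, and the equivalence follows.

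For part $(ii)$, however, your route has a genuine gap. Carrying out your expansion, with $M=V_k^*V_k$ and $\widetilde F=V_k^*FV_k$, you obtain
\[
V_k^*R_kV_k = M\bigl[g_kg_k^*+K_kT_k^{-1}+T_k^{-1}K_k^*-T_k^{-1}\widetilde F\,T_k^{-1}\bigr]M .
\]
On the Galerkin set you substitute $g_k=T_k^{-1}\bone$ (from $(i)$), multiply by $T_k$ on both sides, and then invoke (\ref{eqn:neweqn}) to replace $\bone\bone^*$. What drops out is
\[
(\ba_k^*T_k+T_kK_k-\widetilde F)\;+\;(\ba_k^*T_k+T_kK_k-\widetilde F)^{*}=0,
\]
i.e.\ only that $\ba_k^*T_k+T_kK_k-\widetilde F$ is skew-Hermitian, not that it vanishes. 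The converse direction suffers the same loss of information.

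The missing ingredient is the identity $K_k=-g_k\bone^*+\ba_k$, obtained by applying $(V_k^*V_k)^{-1}V_k^*$ to the basis relation $A^*V_k=-C^*\bone^*+V_k\ba_k$ (which you mention but do not exploit). With it, one sees directly that
\[
\ba_k^*T_k+T_kK_k-\widetilde F
=(\ba_k^*T_k+T_k\ba_k-\widetilde F)-T_kg_k\bone^*
=\bone\bone^*-T_kg_k\bone^*
=(\bone-T_kg_k)\bone^*,
\]
using (\ref{eqn:neweqn}) for the middle equality. Hence $\ba_k^*T_k+T_kK_k-\widetilde F=0$ if and only if $T_kg_k=\bone$, which by $(i)$ is exactly the Galerkin condition. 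This one-sided substitution is the paper's argument and bypasses re-expanding $V_k^*R_kV_k$ altogether. Your eigenvalue argument for $\alpha_j=-\bar\lambda_j$ is on the right track (similarity via $T_k$ gives $\bar\alpha_j$ as the eigenvalues of $T_k^{-1}\widetilde F-K_k=(V_k^*V_k)^{-1}V_k^*(X_kF-A^*)V_k$), but mind the side on which $T_k^{-1}$ sits and drop the spurious conjugate-transpose in your similarity claim.
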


\begin{proof}
Using the relations in the proof of Proposition \ref{prop:rankoneres},
we first notice that the relations
$V_k^*A^*V_k=-V_k^*C^*\bone^*+V_k^*V_k\ba_k$ and
$K_k=-g_k\bone^*+\ba_k$ hold. Then from (\ref{eqn:rankoneRes}) we obtain
\begin{equation*}
\begin{aligned}
V_k^*R_k V_k&=[V_k^*C^*-(V_k^*V_k)T_k^{-1}\bone][V_k^*C^*-(V_k^*V_k)T_k^{-1}\bone]^*\\
&=(V_k^*V_k)[g_k-T_k^{-1}\bone][g_k-T_k^{-1}\bone]^*(V_k^*V_k) ,
\end{aligned}
\end{equation*}
from which the necessary and sufficient condition in $(i)$ follows.

For proving $(ii)$, let us first assume that $V_k^*R_kV_k=0$. Then,
using $T_k g_k= \bone$ in $(i)$ and $K_k=-g_k\bone^*+\ba_k$ we have
from Proposition \ref{prop:Teqn}
\begin{equation}
\begin{aligned}
0&=\ba_k^*T_k+T_k\ba_k-V_k^*FV_k-\bone\bone^*\nonumber\\
&=\ba_k^*T_k+T_k\ba_k-V_k^*FV_k-T_kg_k\bone^*\nonumber\\
&=\ba_k^*T_k+T_k\ba_k-V_k^*FV_k+T_k(K_k-\ba_k)\nonumber\\
&=\ba_k^*T_k+T_kK_k-V_k^*FV_k . \label{eqn:TK}
\end{aligned}
\end{equation}
To prove the opposite direction, we start from $0=\ba_k^*T_k+T_kK_k-V_k^*FV_k$
and go backward to $0=\ba_k^*T_k+T_k\ba_k-V_k^*FV_k-T_k(V_k^*V_k)^{-1}V_k^* C^*\bone^*$. 
Since our iterates satisfy
(\ref{eqn:neweqn}), it must follow that $T_k(V_k^*V_k)^{-1}V_k^* C^*=\bone$,
that is $g_k=T_k^{-1} \bone$.

 Finally, we notice that (\ref{eqn:TK})
 is equivalent to $T_k^{-1}\ba^*T_k=T_k^{-1}V_k^*FV_k-K_k$, and
\begin{eqnarray*}
T_k^{-1}\ba^*T_k&=&(V_k^* V_k)^{-1}V_k^*V_k T_k^{-1}V_k^*FV_k-K_k \\
&=&
(V_k^* V_k)^{-1}V_k^* ( V_k T_k^{-1}V_k^*F - A)V_k ,
\end{eqnarray*}
so that
the eigenvalues of the first and last matrices 
coincide, and the eigenvalues of $(V_k^* V_k)^{-1}V_k^* ( V_k T_k^{-1}V_k^*F - A)V_k$
coincide with those of 
$(V_k^* V_k)^{-1/2}V_k^* ( V_k T_k^{-1}V_k^*F - A)V_k(V_k^* V_k)^{-1/2}$, where
the columns of $V_k(V_k^* V_k)^{-1/2}$ define an orthogonal basis for the space.
 Therefore, these are the Ritz values of
$V_k T_k^{-1}V_k^*F - A$ onto the space Range($V_k$).
%
%
\end{proof}

\begin{remark}
{\rm
The previous theorem provides insight into the estimation of the poles of RKSM,
when a greedy algorithm is used to generate a pole sequence ``on the fly'':
in the linear case, poles are estimated by an optimization strategy of a
scalar rational function on a certain region of the complex plane. The function
has poles at the already computed shifts and
zeros at the Ritz values of $-A$ in the current space \cite{Druskin.Simoncini.11}. 
The results of Theorem \ref{th:ritz}
suggest that in the quadratic case, an alternative choice could be given
by the Ritz values of $-A^*+X_k^{(G)}F$, where $X_k^{(G)}$ is the current 
approximate solution. The very preliminary experiments reported in 
Example~\ref{ex:toeplitz} seem to encourage the use of this strategy when
$A$ is nonnormal and $X_k^{(G)}F$ is sizable in norm. \endproof
}
\end{remark}

In the case when the Ritz values of $A$, $\lambda_j$, are considered, the condition
$\alpha_j = - \bar \lambda_j$ is associated with the optimality of the generated
rational Krylov subspace as a model order reduction process for a linear
dynamical system; see, e.g.,
\cite{Gugercin2008a}. Whether different optimality
results could be shown in our setting remains an open problem.

\section{Numerical experiments}\label{sec:expes}
In this section we report on our numerical experience with the subspace iteration
described in Algorithm \ref{algo:CFADI}. Experiments were performed in Matlab (\cite{matlab7})
with version 7.13 (R2011b) of the software.

We do not report these numerical experiments to propose the method as a
valid competitor of, e.g., rational Krylov subspace solvers, as
the large majority of our experiments showed otherwise. Having the extra
feature of the Galerkin projection, RKSM with the same poles will in general
be superior to subspace iteration, both in terms of number of iterations and
memory requirements. Instead, our purpose is to explore what the expected
performance of the method will be, and highlight the relations with the
Galerkin procedure, specifically in connection with the pole selection.
This analysis also lead us to the derivation of a possibly more effective pole selection
for RKSM, compared with what was used, e.g., in \cite{Simoncini.Szyld.Monsalve.13}.
All experiments are performed with $F$ and $G$ of rank one. Similar results 
may be obtained with matrices of larger rank.
{All plots report the computed residual norm, according to
section~\ref{sec:res}, versus the space dimension. In fact, for \name\ this refers
to the number of columns in the matrix $V_k$  in Algorithm~\ref{algo:CFADI}, 
since the numerical rank of that
matrix may be lower.}

We do not report experimental comparisons with other methods such as
inexact Newton, as they are available in \cite{Simoncini.Szyld.Monsalve.13}, at
least with respect to projection-type methods.

\begin{figure}[htb]
\centering
\includegraphics[width=.48\textwidth]{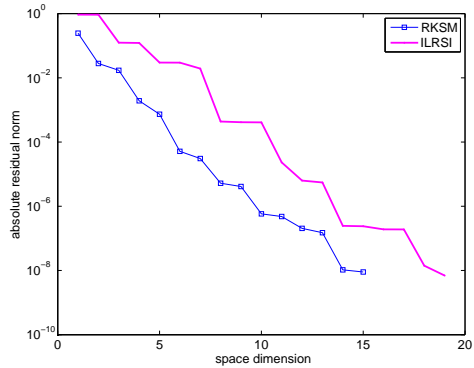}
\caption{Example \ref{ex:1}. Performance of the analyzed methods.\label{fig:ex1}}
\end{figure}

In all our examples with the subspace iteration algorithm \name,
the poles are computed a-priori. Unless 
explicitly stated otherwise,
these are computed using Penzl's algorithm \cite{Penzl2000a} on the matrix $A$ 
{(when used for these specific problems, the variant of 
Penzl's algorithm using $\Ham$ mentioned in section \ref{sec:param}
did not give appreciably better results)}.
In the first two examples, 
the performance of the new method is compared with that of adaptive RKSM, as
used, for instance, in \cite{Simoncini.Szyld.Monsalve.13}, where the poles
are computed adaptively. We notice that the main computational cost per iteration,
namely the solution of the shifted system with $A$, is the same for both
methods, therefore the number of solves may represent a good measure for
the comparison.
%

\begin{example}\label{ex:1}
{\rm 
We consider the (scaled) discretization of the Laplace operator on the unit square, with
100 interior points in each direction, so that the resulting matrix $A$ has dimension
$n=10 000$. The matrices $F$ and $G$ are given as $F=bb^*$ and $G=c^*c$ with
$b={\mathbf 1}$ and $c^*=e_1$. The performance of \name\
is reported in 
Figure~\ref{fig:ex1}, together with that of RKSM. 
The convergence rate is similar for the two methods, although RKSM consistently shows 
smaller residual norm.
}
\end{example}

\begin{figure}[htb]
\centering
\includegraphics[width=.48\textwidth]{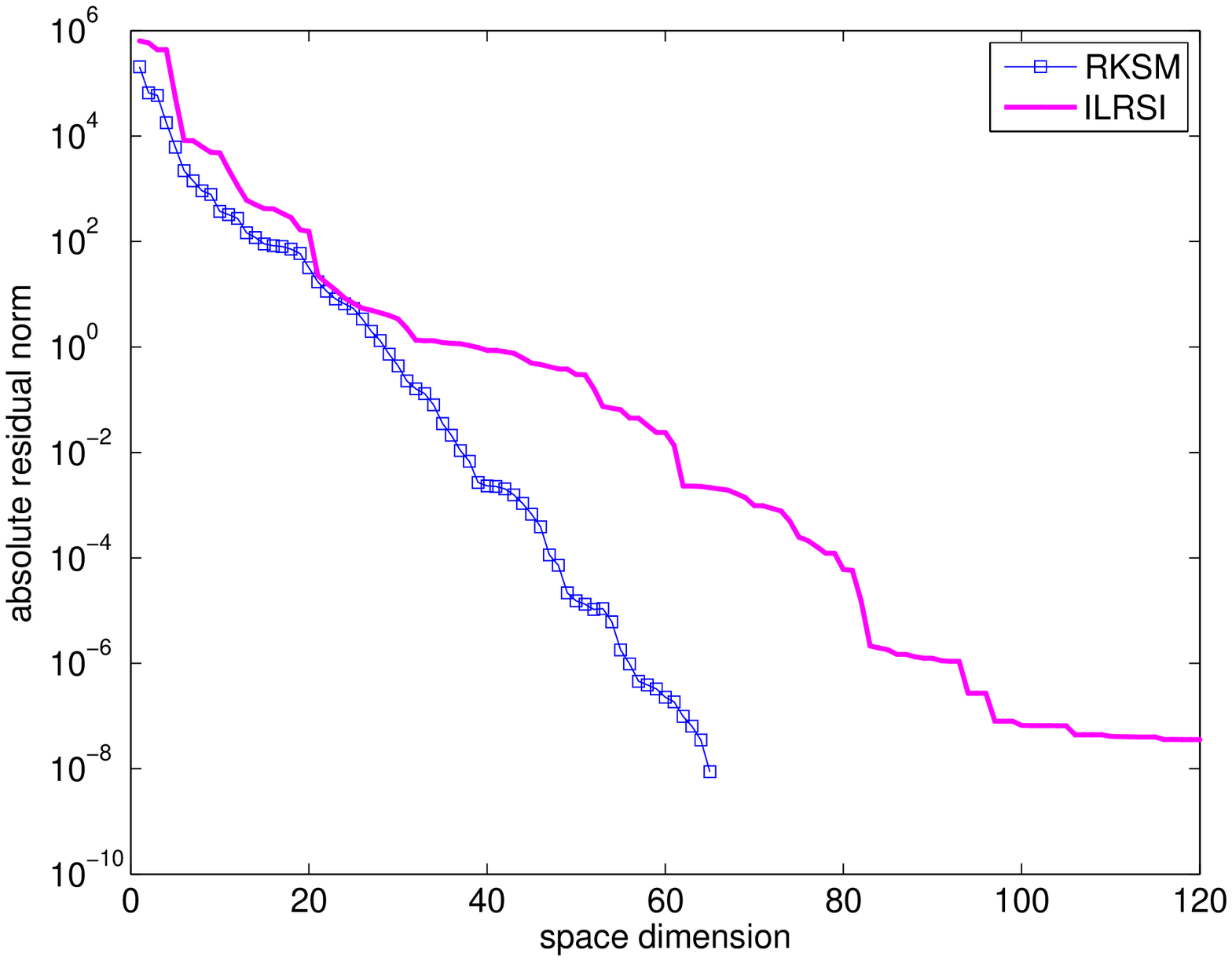}
\includegraphics[width=.48\textwidth]{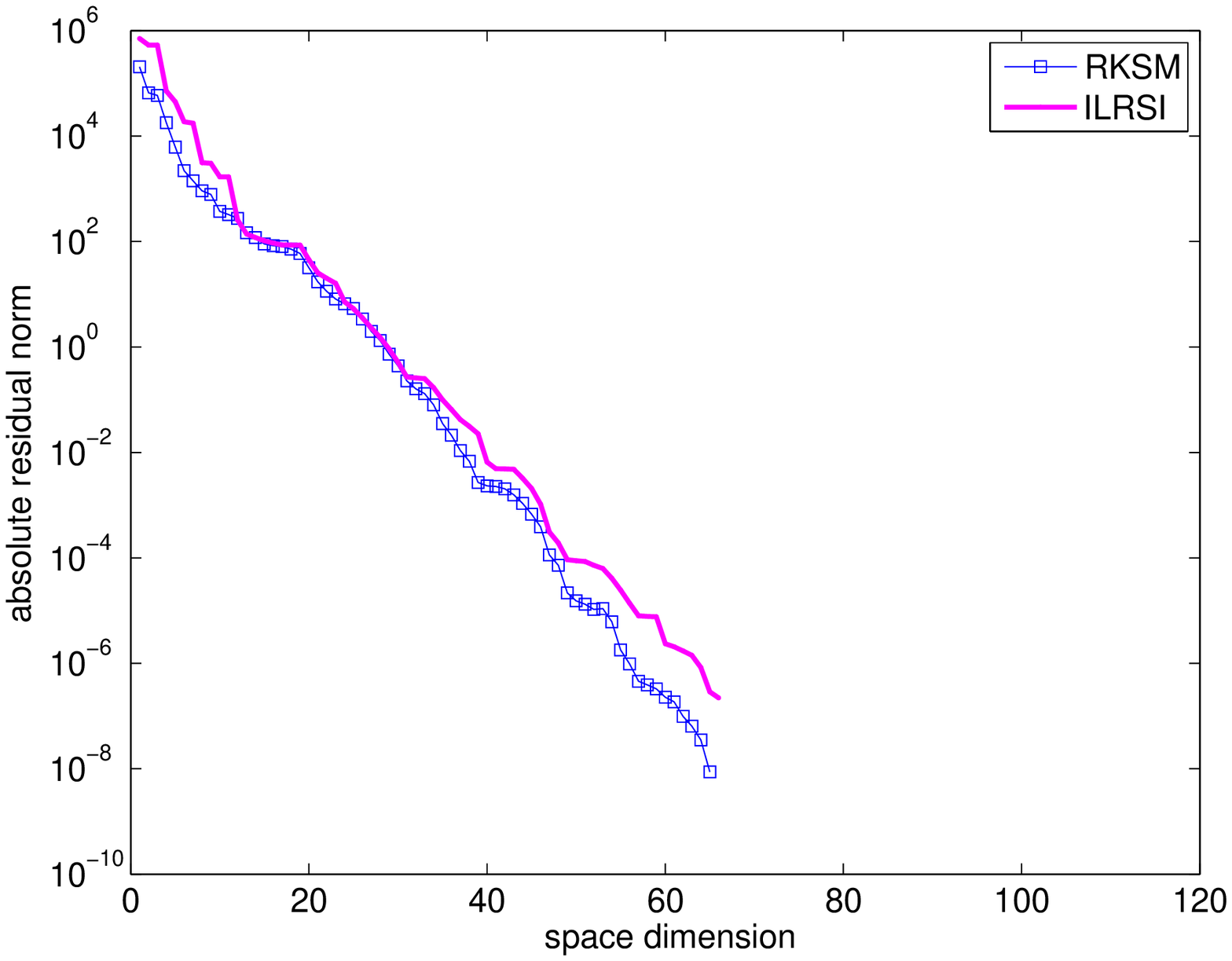}
\caption{Example \ref{ex:2}. Performance of the analyzed methods. For \name, the
parameters are obtained either via {\tt lyapack} (left) or, as
those of RKSM (right).  \label{fig:ex2}}
\end{figure}

\begin{example}\label{ex:2}
{\rm
In this example, we consider the data set FLOW from the Oberwolfach
collection (\cite{Collection2003}), with $n= 9669$; $B$ and $C^*$ have a single column.
The convergence histories of the subspace iteration and of adaptive RKSM
are reported in Figure~\ref{fig:ex2}.  The left plot shows adaptive RKSM
and \name, where for the latter the poles were pre-computed with 
Penzl's algorithm on $A$.  For this example, the adaptive RKSM
is able to obtain an accurate solution appreciably earlier than the new method.
In the right plot, subspace iteration was run with the poles adaptively 
generated by RKSM, showing a convergence history very similar to that of RKSM.
Such different performance confirms what one usually finds in the linear case:
the behavior of ADI is very sensitive to the poles choice.
}
\end{example}

\begin{example}\label{ex:toeplitz}
{\rm
We consider the $500\times 500$ Toeplitz matrix 
$$
A=\begin{bmatrix} 2.5 & 1 & 1 & 1 & 0 & \ddots &\\ -1 & 2.5 & 1 & 1 & 1 & 0 & \ddots  \\
0 & -1 & 2.5 & 1 & 1 & 1 & \ddots  \\ 
\ddots & \ddots & \ddots & \ddots & \ddots & \ddots & \ddots  \\
\ddots & \ddots & \ddots & \ddots & \ddots & \ddots & \ddots  \\
  &  &  &   & 0 & -1 &  2.5  
\end{bmatrix} ,
$$
with $C=[1,-2,1,-2,\ldots]$, while $B={\mathbf 1}$
normalized or non-normalized. This type of matrices  is known to be
very non-normal, which implies that at small perturbations of the entries there
may correspond very large spectral perturbations; see, e.g., \cite[ch.7]{Trefethen.Embree.05}.
Figure \ref{fig:toeplitz} reports the convergence history with adaptive
RKSM and \name, when the latter uses the poles computed
by the former. The left-most plot stems from using $B/\|B\|$ in place of $B$, whereas
the middle plot refers to the unnormalized case. 
While the performance of RKSM only slightly degrades in the unnormalized case,
that of subspace iteration drastically changes, showing almost complete stagnation. Indeed,
two very large in modulus eigenvalues of $\Ham$ are mapped by Cayley's transformation
to an area very close to the unit circle, for all parameters $\alpha_k$, thus causing
very slow convergence.
The right-most plot shows the performance of the methods with $B={\mathbf 1}$ (unnormalized),
when the parameters in RKSM were computed by using the current Ritz values of
$A-BB^*X_k^{(G)}$ instead of those of $A$ (cf. Theorem \ref{th:ritz}). 
We can readily see that performance
of both methods is significantly improved, and in particular no complete
stagnation occurs for subspace iteration. A closer look reveals that for $X$ exact, 
$A-BB^*X$ has an isolate eigenvalue close to $-250$ (apparently
caused by the modification induced by the norm of $B$),
which is not captured by the Ritz values of $A$ alone. When $B$ is normalized,
the Ritz values of $A-BB^*X_k^{(G)}$ do not differ significantly from those of $A$, and
thus performance does not differ much. So in this case where the spectrum of
$A-BB^*X$ differs significantly from that of $A$, using the Ritz values of
$A-BB^*X_k^{(G)}$ for the adaptive computation of the parameters yields significantly
better performance. {This phenomenon deserves further study.}
}
\end{example}

\begin{figure}[htb]
\centering
\includegraphics[width=.32\textwidth]{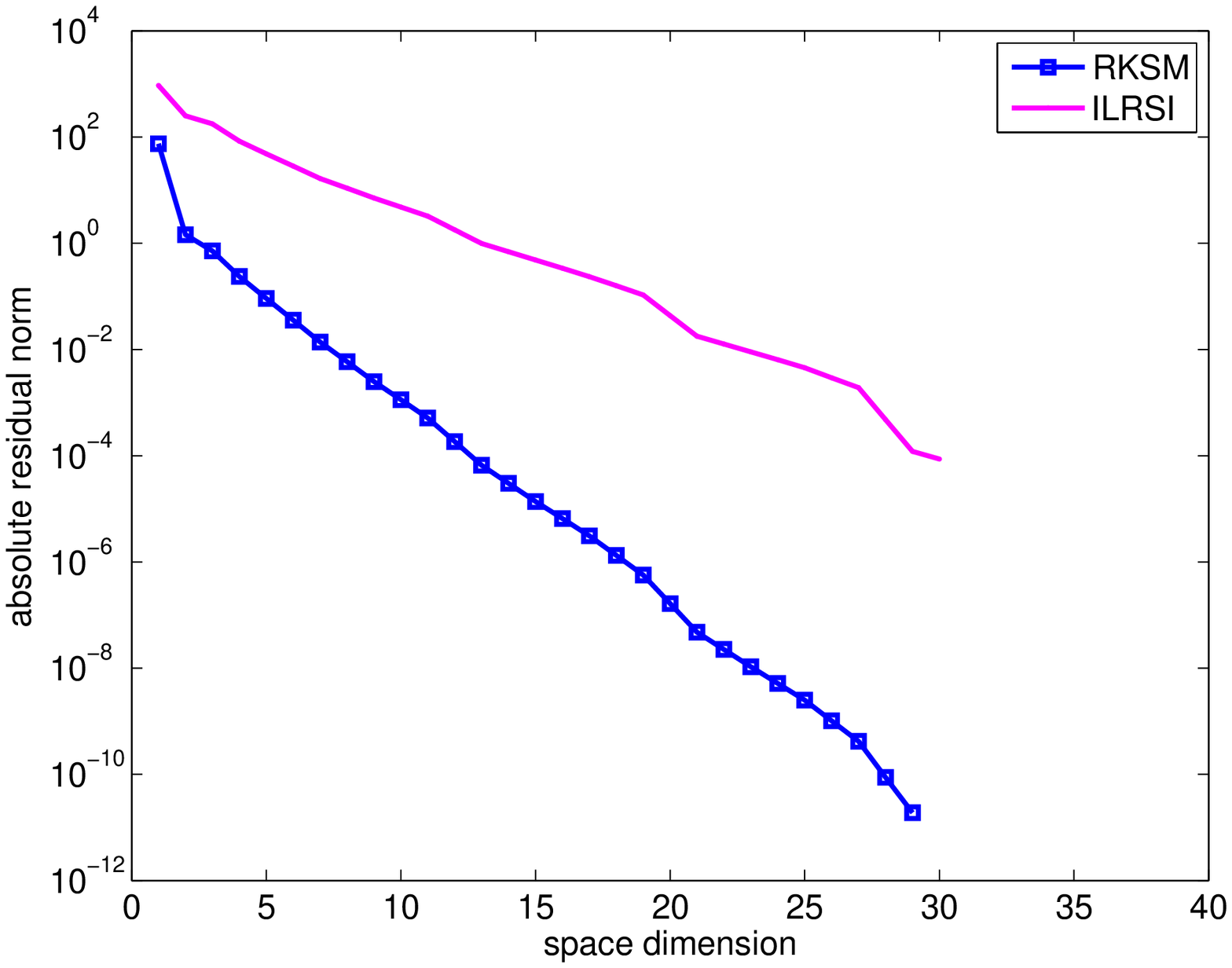}
\includegraphics[width=.32\textwidth]{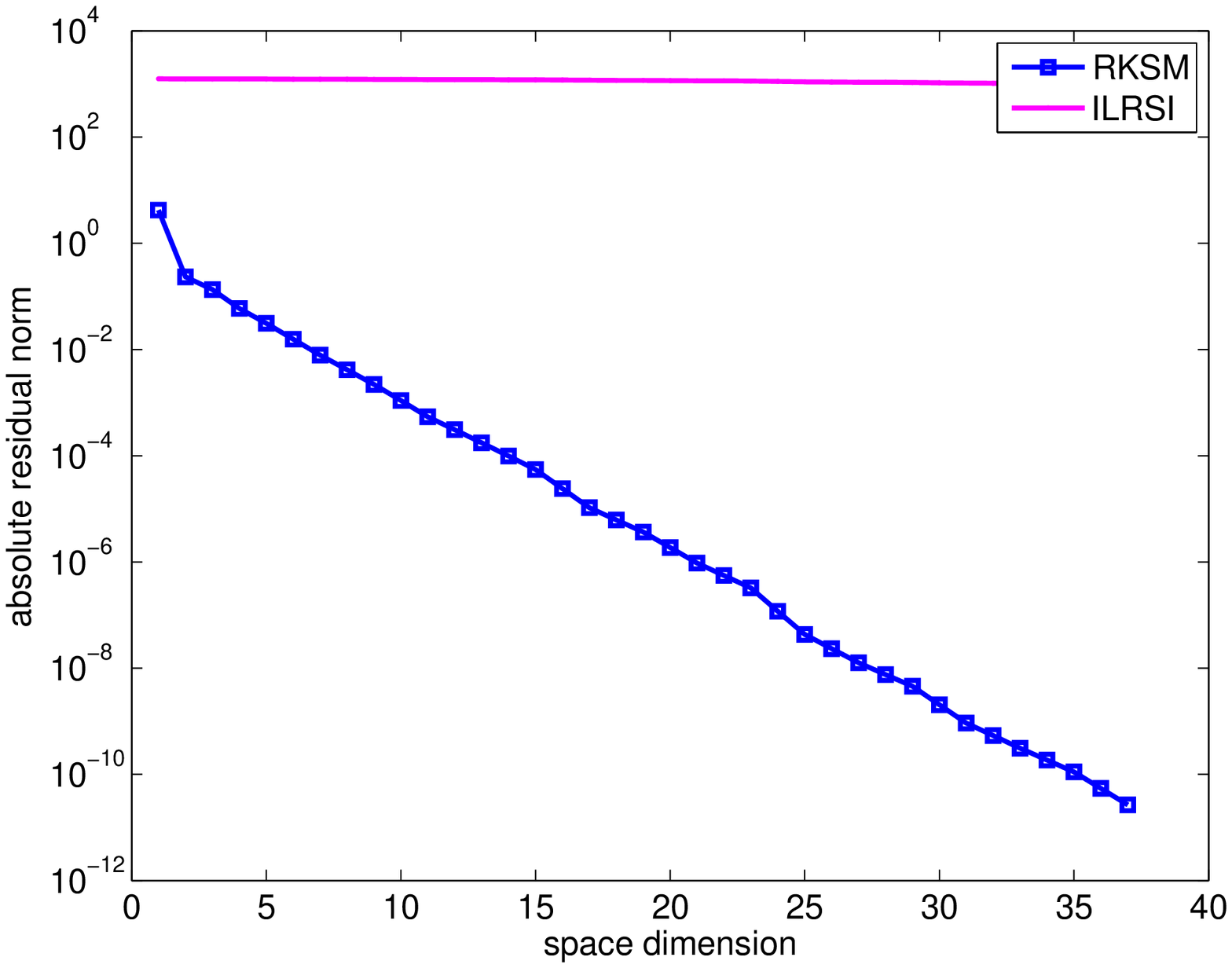}
\includegraphics[width=.32\textwidth]{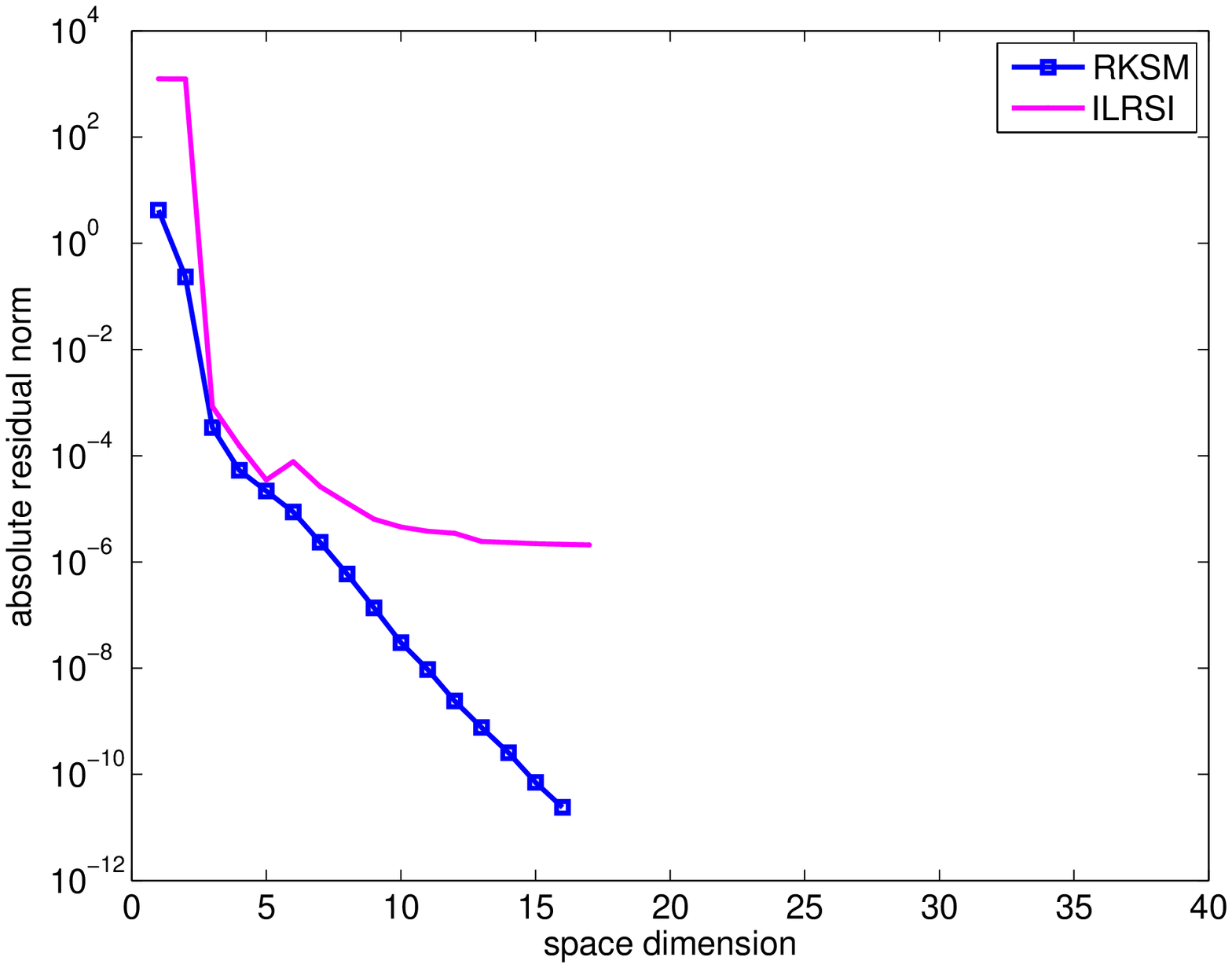}
\caption{Example \ref{ex:toeplitz}: Toeplitz matrix.\label{fig:toeplitz} Subspace iteration uses parameters
computed with adaptive RKSM. Left: $B$ normalized to have unit
Euclidean norm. Middle: unnormalized $B$. Right: parameters computed with ``Stabilized'' Ritz values
in RKSM for unnormalized $B$.}
\end{figure}

\section{Conclusions}
We have derived a computationally feasible subspace iteration algorithm for
the approximation of the solution to the large scale algebraic Riccati equation,
when the matrices $F$ and $G$ have low rank.  {The
new method coincides with the ADI method 
in the linear equation case}. Consequently, the 
performance of the new method depends on certain
parameters, whose selection follows similar reasonings than those
used for ADI. Our derivation also shows that ADI may be viewed
as a subspace iteration method for the Hamiltonian matrix with $F=0$.
Other issues deserve further future analysis, such as the choice of the
initial approximation $X_0$, which, together with a refined 
shift selection, could considerably speed up the process.
{Although we have worked throughout with real data,
the method is also well suited for complex data, as long as the poles
are chosen in a suitable manner.}

We have also derived a new insightful connection of the proposed method
with the Galerkin rational Krylov
subspace scheme, which aims at generalizing known equivalence
in the linear case. Such connection opens up a new venue for
the understanding of the convergence properties of RKSM, which
is a competitive alternative to Newton based approaches. We
plan to explore this problem in future work.

\section*{Acknowledgments}
This work was performed while the first author was visiting
the Department of Mathematics of the Universit\`a di Bologna
{during the period Sept 2011 - Aug 2013,}
supported by
{fellowship {2011631028} from the China Scholarship Council (CSC).}

\section*{Appendix}  In this appendix we prove Theorem \ref{th:convth}, ensuring convergence
of the subspace iteration. The proof is an 
adaptation of the general proof in \cite[Theorem 7.3.1, page 337]{MR1417720}
to our context.

Let ${\cal H} = Q T Q^*$ be the block Schur decomposition of $\cal H$, with $T=[T_{11}, T_{12}; 0, T_{22}]$,
as in (\ref{eqn:schur}).
Then 
\begin{eqnarray}\label{eqn:blockdiag}
{\cal H} = P \begin{bmatrix}T_{11} & 0\\0 &T_{22} \end{bmatrix} P^{-1} , \qquad
\mbox{where}\quad
P = Q \begin{bmatrix}I&K\\0 &I\end{bmatrix},
\end{eqnarray}
and $K$ is the unique solution to the Sylvester equation $T_{11} K-K T_{22} = - T_{12}$ 
\cite[page 224]{Stewart.Sun.90}. By using the relation 
${\cal S}(\alpha_k) = I - 2 \aa_k ({\cal H}+\alpha_k I)^{-1}$, it can be readily seen that
the same matrix $P$, block diagonalizes ${\cal S}(\alpha_k)$ independently of $k$, that is
$$
\Sym_k =P\begin{bmatrix}T_{11(k)} & 0\\0 &T_{22(k)} \end{bmatrix}P^{-1}, 
{\rm\qquad for \quad every \quad} k.
$$
In particular, the second block column of $P$ determines a basis for the
left stable invariant subspace of ${\cal H}$ and ${\cal S}(\alpha_k)$. More precisely,
letting $Q=[Q_1, Q_2]$, then
$D_n(\Ham^*)=\Range(Q_1-Q_2 K^*)$ and
$D_n(\Sym^*)=\Range(Q_1-Q_2 K^*)$.


\begin{theorem}
With the notation above, let 
$[I ; X_0]=U_0R_0$ be the skinny QR decomposition of $[I;X_0]$, 
and assume that
$X_0$ is such that 
$$
d=\dist\left(D_n\left(\Ham^*\right),\Range\left(\begin{bmatrix}I\\X_0\end{bmatrix}\right)\right)<1 .
$$
If for any $k>0$, the matrix $M_k$ in the iteration (\ref{eqn:basic_rec}) is nonsingular,
then the associated iterate $X_k$ satisfies
%
%
\begin{equation}
\begin{aligned}
\dist\left(\Range\left(\begin{bmatrix}I\\X_+\end{bmatrix}\right),
\Range\left(\begin{bmatrix}I\\X_k\end{bmatrix}\right)\right)
\leq \gamma 
\left\|\prod_{i=k}^1 T_{22(i)}\right\|_2\left\|\prod_{i=1}^k T_{11(i)}^{-1}\right\|_2 ,
\end{aligned}
\end{equation}
where $\gamma = \frac{\|R_0^{-1}\|_2}{\sqrt{1-d^2}}\left( 1+\frac{\|T_{12}\|_F}{sep(T_{11},T_{22})}\right)$.
\end{theorem}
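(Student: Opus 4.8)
The plan is to follow the classical subspace iteration convergence argument from \cite[Theorem 7.3.1]{MR1417720}, adapted to the Cayley-transformed setting with varying shifts. The central observation is that a single matrix $P$ (from \eqref{eqn:blockdiag}) simultaneously block-diagonalizes every $\Sym_k$, so that the product $\Sym_k \cdots \Sym_1$ is block-diagonalized by the \emph{same} $P$, with diagonal blocks $\prod_{i=k}^1 T_{11(i)}$ and $\prod_{i=k}^1 T_{22(i)}$. First I would express the iterate in closed form: unrolling \eqref{eqn:basic_rec}, the columns of $[I;X_k]$ span $\Range\bigl(\Sym_k\cdots\Sym_1 [I;X_0]\bigr)$, provided all the $M_j$ are nonsingular (which is exactly the hypothesis). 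So the subspace generated at step $k$ is the image under the product Cayley map of the initial subspace $\Range([I;X_0])$.

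Next I would change coordinates by $P^{-1}$. Write $P^{-1}[I;X_0] R_0^{-1} U_0 = \begin{bmatrix} W_1 \\ W_2 \end{bmatrix}$ in conformal blocks, where $U_0$ is the orthonormal factor from the skinny QR of $[I;X_0]$. The hypothesis $d<1$ is precisely what guarantees $W_1$ is nonsingular: $d = \dist(D_n(\Ham^*), \Range([I;X_0]))$, and since $D_n(\Ham^*) = \Range(Q_1 - Q_2 K^*)$ is the second-block-column space of $P$, having the principal angle bounded away from $\pi/2$ forces the "first block" component to be invertible, with $\|W_1^{-1}\|$ controlled by $1/\sqrt{1-d^2}$ and by $\|R_0^{-1}\|$; the factor $1 + \|T_{12}\|_F/\mathrm{sep}(T_{11},T_{22})$ enters as $\|P\|\,\|P^{-1}\|$-type bound via $\|K\| \le \|T_{12}\|_F/\mathrm{sep}(T_{11},T_{22})$ (the standard Sylvester-equation estimate). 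In these coordinates the iterated subspace is spanned by $\begin{bmatrix} \bigl(\prod_{i=k}^1 T_{11(i)}\bigr) W_1 \\ \bigl(\prod_{i=k}^1 T_{22(i)}\bigr) W_2 \end{bmatrix}$, and since $\prod T_{11(i)}$ is invertible we may right-multiply by $W_1^{-1}\bigl(\prod T_{11(i)}\bigr)^{-1}$ to get $\begin{bmatrix} I \\ E_k \end{bmatrix}$ with $E_k = \bigl(\prod_{i=k}^1 T_{22(i)}\bigr) W_2 W_1^{-1} \bigl(\prod_{i=1}^k T_{11(i)}^{-1}\bigr)$.

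Finally I would convert the "error block" $E_k$ into a distance between subspaces. The exact stabilizing subspace in $P$-coordinates is $\begin{bmatrix} I \\ 0\end{bmatrix}$ (up to the same change of variables), so $\Range([I;X_+])$ and $\Range([I;X_k])$ correspond to $\Range\begin{bmatrix} I \\ 0 \end{bmatrix}$ and $\Range\begin{bmatrix} I \\ E_k \end{bmatrix}$; the distance between these two is bounded by $\|E_k\|_2$ up to the conditioning of $P$, which is again absorbed into $\gamma$. Then $\|E_k\|_2 \le \|W_2 W_1^{-1}\|_2 \,\bigl\|\prod_{i=k}^1 T_{22(i)}\bigr\|_2 \,\bigl\|\prod_{i=1}^k T_{11(i)}^{-1}\bigr\|_2$, and collecting all the constants ($\|R_0^{-1}\|_2$, $1/\sqrt{1-d^2}$, and the $K$-dependent factor) into $\gamma$ yields the claim. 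The main obstacle I anticipate is bookkeeping the constants carefully: tracking how $\|P\|$, $\|P^{-1}\|$, $\|W_1^{-1}\|$, and the QR factor $R_0$ combine to give exactly $\gamma = \frac{\|R_0^{-1}\|_2}{\sqrt{1-d^2}}\bigl(1 + \frac{\|T_{12}\|_F}{\mathrm{sep}(T_{11},T_{22})}\bigr)$ — in particular showing the $\|K\|$ estimate contributes additively as $1+\|K\|$ rather than multiplicatively, and that no extraneous factor of $\|Q\|=1$ or spurious dependence on $\sigma$-gaps survives beyond what is packaged in the two product norms. Everything else is the routine substitution that \cite[Theorem 7.3.1]{MR1417720} already handles in the unshifted case.
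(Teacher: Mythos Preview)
Your approach is essentially the paper's: both follow the classical subspace iteration argument of \cite[Theorem~7.3.1]{MR1417720}, exploit that a single $P$ block-diagonalizes every $\Sym_k$, and use the Sylvester bound $\|K\|_F\le \|T_{12}\|_F/\mathrm{sep}(T_{11},T_{22})$.

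The one place where the paper is cleaner than your sketch is the coordinate choice for the distance computation. You propose to work in $P^{-1}$-coordinates and then convert back, which forces you to absorb ``conditioning of $P$'' into the constant; this is exactly where you say you anticipate bookkeeping trouble, and indeed an extra factor can creep in that way. The paper instead stays in $Q^*$-coordinates (unitary), setting $[V_k;W_k]=Q^*[I;X_k]$, so that the distance is \emph{exactly} $\|Q_2^*U_k\|_2=\|W_kR_k^{-1}\|_2$ with no conversion factor. The $[I,-K]$ term then appears inside the recursion for $W_k$ itself (from $V_k-KW_k=[I,-K][V_k;W_k]$), not from a coordinate change, and $\|[I,-K][V_k;W_k]R_k^{-1}\|\le 1+\|K\|_F$ because $[V_k;W_k]R_k^{-1}=Q^*U_k$ has orthonormal columns. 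The factor $\|R_0^{-1}\|/\sqrt{1-d^2}$ comes from writing $V_0-KW_0=(I+KK^*)^{1/2}(Z^*U_0)R_0$ with $Z$ an orthonormal basis of $D_n(\Ham^*)$ and $\sigma_{\min}(Z^*U_0)=\sqrt{1-d^2}$. This yields $\gamma$ on the nose, with no stray $\|P\|$-type factor.
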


\begin{proof}
From (\ref{eqn:basic_rec}) and substituting $\Sym_k = Q T_{(k)} Q^*$, we obtain
\begin{eqnarray}
\begin{bmatrix}I\\X_{k}\end{bmatrix}M_k&=&\Sym_k \begin{bmatrix}I\\X_{k-1}\end{bmatrix}\\ 
Q^*\begin{bmatrix}I\\X_{k}\end{bmatrix}M_k&=&T_{(k)}Q^*\begin{bmatrix}I\\X_{k-1}\end{bmatrix} 
\end{eqnarray}
Recalling the blocking $Q=[Q_1, Q_2]$, let 
$\begin{bmatrix}V_k\\W_k\end{bmatrix}:=Q^*\begin{bmatrix}I\\X_{k}\end{bmatrix}$, so that
$$
\begin{bmatrix}V_k\\W_k\end{bmatrix}M_k=T_{(k)}\begin{bmatrix}V_{k-1}\\W_{k-1}\end{bmatrix}.
$$
Using the block diagonalization in  (\ref{eqn:blockdiag}) we obtain
$$
\begin{bmatrix}V_k-KW_k\\W_k\end{bmatrix}M_k=\begin{bmatrix}T_{11(k)} & 0\\0 &T_{22(k)} \end{bmatrix}
\begin{bmatrix}V_{k-1}-KW_{k-1}\\W_{k-1}\end{bmatrix}.
$$
Later in the proof we shall show that $V_{0}-KW_{0}$ is nonsingular. Under such assumption,
and since both $T_{11(k)}$ and $M_k$ are nonsingular as well, it follows from an
induction argument that  $V_{k}-KW_{k}$ is nonsingular. 
Therefore, recursively applying the same relation, we obtain
\begin{eqnarray}
W_k&=&T_{22(k)}W_{k-1}(V_{k-1}-KW_{k-1})^{-1}T_{11(k)}^{-1}(V_k-KW_k) \nonumber \\ 
& = & 
T_{22(k)}T_{22(k-1)}\cdots T_{22(1)} (V_0-KW_0)^{-1}T_{11(1)}^{-1}\cdots T_{11(k-1)}^{-1}T_{11(k)}^{-1}(V_k-KW_k)\nonumber\\
&=&\prod_{i=k}^1 T_{22(i)}\ (V_0-KW_0)^{-1}\prod_{i=1}^k T_{11(i)}^{-1}\begin{bmatrix}I&-K\end{bmatrix}
\begin{bmatrix}V_k\\W_k\end{bmatrix}.  \label{eqn:W}
\end{eqnarray}

The matrix $W_k$ is related to the distance of the two spaces of interest. Indeed,
let $[I;X_k]= U_k R_k$ be the skinny QR decomposition of $[I;X_k]$. Then using the
expression for the distance in \cite[section 2.6.3]{MR1417720}, we have
$$
\dist\left(\Range(Q_1),\Range\left(\begin{bmatrix}I\\X_k\end{bmatrix}\right)\right)=\|Q_2^*U_k\|_2
 =\left\|Q_{2}^*\begin{bmatrix}I\\X_k\end{bmatrix}R_k^{-1}\right\|_2=\|W_kR_k^{-1}\|_2 .
$$
Using  $\Range\left(\begin{bmatrix}I\\X_+\end{bmatrix}\right)=\Range(Q_1)$ and (\ref{eqn:W}) we obtain
\begin{eqnarray*}
\dist\left(\Range\left(\begin{bmatrix}I\\X_+\end{bmatrix}\right), 
\Range\left(\begin{bmatrix}I\\X_k\end{bmatrix}\right)\right)
&=& \|W_kR_k^{-1}\|_2\\
&\le & \gamma_0  \left\|\prod_{i=k}^1 T_{22(i)}\right\|_2\left\|\prod_{i=1}^k T_{11(i)}^{-1}\right\|_2,
\end{eqnarray*}
with $\gamma_0=\|(V_0-KW_0)^{-1}\|_2\|\begin{bmatrix}I&-K\end{bmatrix}\|_2$,
where in the last inequality we used the fact that the matrix
$\begin{bmatrix}V_k\\W_k\end{bmatrix}R_k^{-1}=Q^*\begin{bmatrix}I\\X_k\end{bmatrix}R_k^{-1}=Q^*U_k$ has
orthonormal columns.

We are left to estimate $\gamma_0$ and to ensure the nonsingularity of
$V_0-KW_0$.
Since $K$ is the solution to the Sylvester equation $T_{11} K-K T_{22} = - T_{12}$, it
follows
\begin{equation} \label{eqn:ImK}
\|\begin{bmatrix}I&-K\end{bmatrix}\|_2\leq 1+\|K\|_F\leq 1+\frac{\|T_{12}\|_F}{{\rm sep}(T_{11},T_{22})}.
\end{equation}
Let $Z=Q\begin{bmatrix}I\\-K^*\end{bmatrix}(I+KK^*)^{-\frac{1}{2}}$; clearly the columns of $Z$
are orthonormal and moreover, they span $D_n({\cal H}^*)$. Therefore, it holds that
(cf., e.g., \cite[Theorem 6.1]{MR1417720})
\begin{equation}
\begin{aligned}
d&=\dist\left(D_n(\Ham^*),\Range\left(\begin{bmatrix}I\\X_0\end{bmatrix}\right)\right)=\dist(D_n(\Ham^*),\Range(U_0))\\
&=\sqrt{1-\sigma_{\min}(Z^*U_0)}.
\end{aligned}
\end{equation}
Since $d<1$ by hypothesis, this relation shows that
the smallest singular value $\sigma_{\min}(Z^*U_0)$ is nonzero, and thus
$Z^*U_0$ is nonsingular.
From $U_0R_0=\begin{bmatrix}I\\X_0\end{bmatrix}$ and $\begin{bmatrix}V_0\\W_0\end{bmatrix}=Q^*\begin{bmatrix}I\\X_{0}\end{bmatrix}$, we obtain
\begin{eqnarray*}
V_0-KW_0&=& [I, -K] Q^* [I;X_0] = [I, -K] Q^* U_0 R_0 = (I+KK^*)^{\frac{1}{2}} (Z^* U_0) R_0.
\end{eqnarray*}
Since all three factors on the right are nonsingular, this shows that $V_0-KW_0$ is nonsingular;
moreover, using $\|(Z^* U_0)^{-1}\| = \sigma_{\min}(Z^* U_0)^{-1} = 1/\sqrt{1-d^2}$, we can write
\begin{eqnarray}\label{eqn:V0W0}
\|(V_0-KW_0)^{-1}\| \le \frac{\|R_0^{-1}\|}{\sqrt{1-d^2}} \|(I+KK^*)^{-\frac{1}{2}}\| ,
\end{eqnarray}
with $\|(I+KK^*)^{-\frac{1}{2}}\|\le 1$. Together with (\ref{eqn:ImK}), the estimate (\ref{eqn:V0W0})
bounds $\gamma_0$ from above, giving the final result.
\end{proof}


\bibliography{%
/home/valeria/Bibl/Biblioteca}


\end{document}